\def\sh{\shuffle}
\newtheorem{thm}{Theorem}[section]
\newtheorem*{thm*}{Theorem}
\newtheorem{lem}[thm]{Lemma}
\newtheorem{prop}[thm]{Proposition}
\newtheorem{cor}[thm]{Corollary}
\theoremstyle{definition}
\newtheorem{remark}[thm]{Remark}
\newtheorem{defn}[thm]{Definition}
\newtheorem{example}[thm]{Example}
\newtheorem{conj}[thm]{Conjecture}
\numberwithin{equation}{section}
\def\C{{\mathbb{C}}}
\def\Z{{\mathbb{Z}}}
\def\Q{{\mathbb{Q}}}
\def\P{{\mathbb{P}}}
\DeclareMathOperator{\D}{\textbf{D}}
\def\degb{{\deg_\mathcal{B}}}
\def\zm{{\zeta^{\mathfrak{m}}}}
\def\za{{\zeta^{\mathfrak{a}}}}
\def\z{{\zeta}}
\def\im{{\text{I}^{\mathfrak{m}}}}
\def\il{{\text{I}^{\mathfrak{l}}}}
\def\zb{{\zeta^{\mathfrak{b}}}}
\def\zbl{{\zeta^{\mathfrak{bl}}}}
\def\ia{{\text{I}^{\mathfrak{a}}}}
\def\ib{{\text{I}^\mathfrak{b}}}
\def\ibl{{\text{I}^\mathfrak{bl}}}
\def\iform{{\text{I}^\mathfrak{f}}}
\def\bg{{\mathfrak{bg}}}
\def\rbg{{\mathfrak{rbg}}}
\def\gmot{{\mathfrak{g}^\mathfrak{m}}}
\DeclareMathOperator{\gr}{gr}
\def\qpoly{{\Q\langle e_0,e_1\rangle}}
\def\dmr{{\mathfrak{dmr}_0}}
\def\dg{{\mathfrak{dg}}}
\def\ls{{\mathfrak{ls}}}
\DeclareMathOperator{\Sh}{Sh}
\def\Gmot{{\text{G}_{\mathcal{M}\mathcal{T}(\mathbb{Z})}}}
\def\Umot{{\text{U}_{\mathcal{M}\mathcal{T}(\mathbb{Z})}}}
\DeclareMathOperator{\Lie}{Lie}
\DeclareMathOperator{\id}{id}
\DeclareMathOperator{\mod2}{(mod\ 2)}
\DeclareMathOperator{\Cyc}{C}
\DeclareMathOperator{\Sym}{Sym}
\DeclareMathOperator{\sgn}{sgn}
\DeclareMathOperator{\ic}{I}
\newcommand{\partialx}[1]{\frac{\partial}{\partial x_{#1}}}
\def\proj{{\P^1\setminus\{0,1,\infty\}}}
\title{Motivic Multiple Zeta Values and the Block Filtration}
\author{Adam Keilthy}
\address{Max Planck Institute for Mathematics, Bonn}
\subjclass[2010]{11M32, 11G99}
\email{keilthy@mpim-bonn.mpg.de}
\thanks{This work would not have been possible without the combined support of the Clarendon Fund, University of Oxford, and the Max Planck Institute for Mathematics. The author would also particularly like to thank the referee for their very helpful advice, and suggestion to consider Proposition \ref{charlalt}.}
\begin{document}
\maketitle

\begin{abstract}
We extend the block filtration, defined by Brown based on the work of Charlton, to all motivic multiple zeta values, and study relations compatible with this filtration. We construct a Lie algebra describing relations among motivic multiple zeta values modulo terms of lower block degree, proving Charlton's cyclic insertion conjecture in this structure, and showing the existence of a `block shuffle' relation, a dihedral symmetry, and differential relation.
\end{abstract}
\section{Introduction}
The study of multiple zeta values (MZVs)
$$\z(n_1,\ldots,n_r) := \sum_{1\leq k_1<k_2<\cdots<k_r}\frac{1}{k_1^{n_1}\ldots k_r^{n_r}}$$
goes back to Euler, and finds applications in many areas of mathematics, including the study of associators \cite{drinfeld}, knot theory \cite{barnatan}, and quantum field theory \cite{bkconj}. Of particular interest is describing all algebraic relations among multiple zeta values. When considering single zeta values, we have the standard conjecture:
\begin{conj}
There are no algebraic relations among $\pi$ and the odd zeta values, i.e.
$$\{\pi,\z(3),\z(5),\ldots\}$$
is algebraically independent.
\end{conj}

In contrast, multiple zeta values satisfy many algebraic relations: the double shuffle relations \cite{racinet}, the associator relations \cite{drinfeld}, the derivation relations \cite{dualder}, and more.
\begin{example} By splitting up the domain of integration (see Example \ref{zeta2int}) or the domain of summation in the product $\z(2)^2$, we obtain:
\begin{align*}
\z(2)^2 &= 2\z(2,2)+ 4\z(1,3)\\
&= 2\z(2,2)+\z(4)
\end{align*}
\end{example}
While some of these, such as the double shuffle relations, are conjectured to describe all relations among MZVs, no complete description is known. One approach to this problem is to lift MZVs to an algebra of formal objects called motivic MZVs via their representation as iterated integrals on $\proj$.

Relations among motivic MZVs are then encoded by the motivic Galois group of $\mathcal{M}\mathcal{T}(\mathbb{Z})$, the category of mixed Tate motives over Spec($\Z$) \cite{brownmixedtate}. From the work of Deligne and Goncharov \cite{delmixedtate}, the algebraic structure of this affine group scheme is well understood. It decomposes as a semidirect product of the multiplicative group and a pro-unipotent group, whose graded Lie algebra is non-canonically isomorphic to a free Lie algebra with generators in odd weight
$$\gmot\cong\text{Lie}[\sigma_3,\sigma_5,\ldots]$$
 called the motivic Lie algebra.
There exists a non-canonical injection $i:\{\sigma_{2k+1}\}_{k\geq 1}\to\qpoly$, allowing us to consider elements of $\gmot$ as noncommutative polynomials. This Lie algebra has ties to the Grothendieck-Teichmuller group \cite{racinet}, associators \cite{drinfeld}, and multiple zeta values \cite{brownmixedtate}, and injects into Racinet's double shuffle Lie algebra $\dmr$ \cite{racinet}. Describing its image in $\qpoly$ explicitly is of particular interest, as coefficients of elements of the image describe relations among motivic MZVs modulo products.

One way to pursue this is to consider an associated graded Lie algebra. We can see that $\gmot$ inherits two filtrations arising from filtrations on motivic multiple zeta values. The MZV $\z(n_1,\ldots,n_r)$ is said to have \textit{weight} $n_1+\cdots+n_r$ and \textit{depth} $r$. Considering $\z$ as a function $\qpoly\to\C$ via iterated integrals, as described in \cite{waldschmidt}, we extend these notions to $\qpoly$, where the weight $|w|$ of a word $w\in\{e_0,e_1\}^\times$ is given by its length, and the depth $d(w)$ of $w$ is the number of occurrences of $e_1$.
Thus we define
\begin{equation*}
\begin{split}
\mathcal{W}_n\qpoly&:=\langle w\mid |w|\leq n\rangle_\Q\\
\mathcal{D}_n\qpoly&:=\langle w\mid d(w)\leq n \rangle_\Q
\end{split}
\end{equation*}

These induce increasing filtrations on the space of motivic MZVs and hence decreasing filtrations on $\gmot$, given via its embedding into $\qpoly$ (here viewed as its own graded dual) by
\begin{equation*}
\begin{split}
\mathcal{W}^n\qpoly&:=\langle w\mid |w|\geq n\rangle_\Q\\
\mathcal{D}^n\qpoly&:=\langle w\mid d(w)\geq n \rangle_\Q
\end{split}
\end{equation*}
It is known that weight is a grading for $\gmot$, but depth is not. Euler's well known result
$$\z(1,2)=\z(3)$$
provides an easy example of this, as do any stuffle relations
$$\z(k)\z(l)=\z(k,l)+\z(l,k)+\z(k+l).$$

As such, we can consider relations among depth-graded MZVs to simplify the problem. In \cite{depthgraded}, Brown studies the associated bigraded Lie algebra $\dg:=\gr_\mathcal{D}\gmot$ and its embedding into the linearised double shuffle algebra $\ls$. However, while he finds that  the depth 1 part of $\sigma_{2k+1}$ define canonical representatives of $\sigma_{2k+1}$ in $\dg\to\qpoly$, there also exist relations in $\dg$, and hence `exceptional' generators are needed. These relations are shown to have a somewhat mysterious connection to modular forms by Pollack \cite{pollack}, and this has been further explored by Baumard and Schneps \cite{schneppollack}. 
\begin{example}
In $\dg$, we have the additional relation
$$\{\sigma_3,\sigma_9\}-3\{\sigma_5,\sigma_7\}=0,$$
corresponding to the relation
$$28\z(3,9)+150\z(5,7)+168\z(7,5)=\frac{5197}{691}\z(12).$$
\end{example}
However, finding such relations can be difficult. as such relations cannot be derived using only the double shuffle relations in low depth, and suggests that `depth-graded' multiple zeta values may not be the most natural choice of object to study.

In this paper, we propose an alternative graded analogue of MZVs, based on the work of Charlton \cite{charthesis}.  Unlike the depth graded MZVs, we do not encounter additional relations, and in fact obtain something isomorphic to the vector space of motivic MZVs. Furthermore, not only is this new filtration compatible with the motivic structure, it is equal to the coradical filtration associated to the motivic coaction and is therefore naturally occuring within the motivic formalism, in addition to having a simple combinatorial description. We go on to show that Charlton's cyclic insertion conjecture holds in this setting and additionally show a new family of `block shuffle' relations hold. In addition, we provide a complete description of the image of the generators $\{\sigma_{2k+1}\}$ in the associated graded Lie algebra.

\section{Motivic background}
We now briefly summarise the essential aspects of the motivic formalism. For greater detail, we refer the reader to \cite{motperiods}, \cite{brownmixedtate}, or \cite{delmixedtate}. We first recall the definition of iterated integrals on $\proj$ \cite{chen}.

\begin{defn}
For any sequence $(a_0;a_1,\ldots,a_n;a_{n+1})\in\{0,1\}^{n+2}$, with $a_1=1, a_n=0$, define 
$$\ic(a_0;a_1,\ldots,a_n;a_{n+1}):=\int_{a_0\leq t_1\leq\cdots\leq t_n\leq a_{n+1}}\prod_{k=1}^n \frac{dt_k}{t_k-a_k}$$
\end{defn}

\begin{prop}[Chen \cite{chen}]
There exists a unique extension of $\ic(a_0;a_1,\ldots,a_n;a_{n+1})$ to all sequences such that
\begin{itemize}
\item $\ic(a_0;a_1)=1$
\item $\ic(a_0;0;a_1)=\ic(a_0;1;a_1)=0$
\item Products satisfy \begin{align*}&\ic(a_0;a_{1},\ldots,a_{m};a_{m+n+1})\ic(a_0;a_{m+1},\ldots,a_{m+n};a_N)\\
={}&\sum_{\sigma\in\Sh_{m+n,n}}\ic(a_0;a_{\sigma(1)},\ldots,a_{\sigma(m+n)};a_{m+n+1})\end{align*}
where $$\Sh_{m+n,n}:=\{\sigma\in S_{m+n}\mid \sigma(1)<\sigma(2)<\cdots<\sigma(m),\ \sigma(m+1)<\cdots<\sigma(m+n)\}$$
and $S_N$ denotes the symmetric group on $\{1,\ldots,N\}$.
\end{itemize}
\end{prop}

Every multiple zeta value can be viewed as an iterated integral:
$$\z(k_1,\ldots,k_r)\mapsto (-1)^r \ic(0;1,\{0\}^{k_1-1},\ldots,1,\{0\}^{k_r-1};1)$$
where $\{0\}^k$ is the sequence given by $0$ repeated $k$ times.

\begin{example}\label{zeta2int}
We claim $\z(2)=-\ic(0;1,0;1)$.
\begin{align*}
\ic(0;1,0;1)=&{} \int_{0\leq t_1\leq t_2\leq 1}\frac{dt_1}{t_1-1}\frac{dt_2}{t_2}\\
=&{} -\int_{0\leq t_1\leq t_2\leq 1}dt_1\sum_{n\geq 0}t_1^n \frac{dt_2}{t_2} = -\int_{0\leq t_2\leq 1}dt_2\sum_{n\geq 0} \frac{t_2^n}{n+1}\\
={}& -\sum_{n\geq 0}\frac{1}{(n+1)^2}=\z(2)
\end{align*}
\end{example}

As such, we can view $\ic$ as a linear function on $\qpoly$ by defining, for $w=e_{i_1}\cdots e_{i_n}$,
$$\ic(0;w;1):=\ic(0;i_1,\ldots,i_n;1)$$
and 
$$\ic(w):=\ic(i_1;i_2,\ldots,i_{n-1};i_n).$$
We then lift these to motivic iterated integrals.

Let $\mathcal{M}\mathcal{T}(\mathbb{Z})$ be the category of mixed Tate motives over $\text{Spec}(\mathbb{Z})$. This is the smallest Tannakian category generated by the Tate motive $\Q(-1)=\text{H}^1(\mathbb{G}_m)$, and is equipped with two fibre functors $\omega_B,\omega_{dR}$ to $\text{\textbf{Vec}}_\Q$.

As a Tannakian category, it is equivalent to the category of representations of an affine group scheme, called the motivic Galois group $\Gmot$, the de Rham realisation of which is given by group scheme of tensor automorphisms of $\omega_{dR}$. It is a deep result \cite{delmixedtate} that $\Gmot^{dR}$  decomposes as
$$\Gmot^{dR} = \Umot^{dR}\rtimes\mathbb{G}_m,$$
for a pro-unipotent group scheme $\Umot^{dR}$, whose graded Lie algebra is the motivic Lie algebra:
$$\gmot\cong\Lie[\sigma_3,\sigma_5,\ldots].$$
As mentioned earlier, $\gmot\subset \qpoly$, (in fact $\gmot\subset\Lie[e_0,e_1]$). The Lie algebra structure is not the usual Lie bracket, but is instead given by the Ihara bracket
$$\{\cdot,\cdot\}:\gmot\wedge\gmot\to\gmot$$
defined by the antisymmetrisation of the linearised Ihara action:
\begin{equation*}
\sigma\circ e_0^ne_1u := e_0^n\sigma e_1u - e_0^ne_1 \sigma^* u + e_0^n e_1(\sigma\circ u)
\end{equation*}
where $(a_1\cdots a_n)^*:=(-1)^na_n\cdots a_1$, and $\sigma\circ 1 = 1\circ\sigma=\sigma$.

Define $\mathcal{A}:=\mathcal{O}(\Umot^{dR})$ to be the graded ring of affine functions of the pro-unipotent part. From the work of Goncharov \cite{goncharov} and Brown \cite{brownmixedtate}, this can be lifted to a ring $\mathcal{H}$ generated by symbols $\im(a_0;a_1,\ldots,a_n;a_{n+1})$ called motivic iterated integrals  such that the following hold.

\begin{itemize}
\item There exists an algebra map $\text{per}:\mathcal{H}\to\mathcal{C}$ such that
$$\text{per}:\im(a_0;a_1,\ldots,a_n;a_{n+1})\mapsto\ic(a_0;a_1,\ldots,a_n;a_{n+1}).$$
\item The quotient by the ideal $I_2$ generated by $\im(0;1,0;1)$, $\mathcal{H}/I_2=\mathcal{A}$.
\item $\mathcal{H}$ is a comodule over $\mathcal{A}$, with coaction
$$\Delta:\mathcal{H}\to\mathcal{A}\otimes\mathcal{H}.$$
\item $\mathcal{H}$ is equal to quotient of the free algebra on $\im(a_0;\ldots;a_{n+1})$ by the largest subideal of $\ker(\text{per})$ that is stable under the coaction.
\end{itemize}

Explicitly, the coaction is given by the formula
\begin{equation*}
\begin{split}
\Delta\im(a_0;a_1,\ldots,a_n;a_{n+1}):&= \\
\sum_{0=i_0<i_1<\cdots<i_k<i_{k+1}=n+1}\prod_{p=0}^k \ia(a_{i_p};a_{i_p+1},\ldots,a_{i_{p+1}-1};a_{i_{p+1}})&\otimes \im(a_0;a_{i_1},\ldots,a_{i_k};a_{n+1})
\end{split}
\end{equation*}
where $0\leq k \leq n$, and $\ia(a_0;a_1,\ldots,a_n;a_{n+1})$ is the image of $\im(a_0;a_1,\ldots,a_n;a_{n+1})$ in $\mathcal{A}$.

We define a motivic multiple zeta value by $$\zm(k_1,\ldots,k_r):=(-1)^r\im(0;1,\{0\}^{k_1-1},\ldots,1,\{0\}^{k_r-1};1).$$
We have $\text{per}(\zm(k_1,\ldots,k_r))=\z(k_1,\ldots,k_r)$, and that motivic MZVs satisfy any relations of ``motivic'' or ``geometric'' origin among numerical MZVs. Conjecturally, these all relations among MZVs, and they are encoded by coefficients of elements of $\Gmot$, or equivalently, by the coaction.

\begin{thm}[Brown]
Let $\mathcal{L}:=\mathcal{A}_{>0}/\mathcal{A}_{>0}^2$ and denote by $\pi_{2k+1}$ the projection $\mathcal{A}\to\mathcal{L}_{2k+1}$ onto the weight $2k+1$ part of $\mathcal{L}$ and define $$D_{2k+1}:=(\pi_{2k+1}\otimes \id)\circ\Delta,$$
and let $\mathcal{H}_n$ be the weight $n$ part of $\mathcal{H}$. Then 
$$\mathcal{H}_n\cap\ker\bigoplus_{1<2k+1<n}D_{2k+1} = \Q\zm(n).$$
\end{thm}
\begin{cor}
The Hoffman zeta values
$$\{\zm(k_1,\ldots,k_r)\mid k_i\in\{2,3\}\}$$
forms a basis for $\mathcal{H}$. Thus every MZV can be written as a linear combination of MZVs whose entries are in $\{2,3\}$.
\end{cor}

We will attempt to describe elements of $\gmot$, which is dual to the Lie coalgebra of indecomposables $\mathcal{L}$ via the pairing
$$\langle u, \im(0;v;1)\rangle=\delta_{u,v}\text{ for all }u,v\in\{e_0,e_1\}^\times$$
i.e. the pairing of a word and an motivic iterated integral is $1$ if the underlying word is the same and $0$ otherwise. Via this pairing, elements of $\gmot$ define relations among motivic MZVs, modulo products.

\begin{example}
In weight $5$, $\gmot$ is spanned by
$$\sigma_5 = e_1e_0^4 -\frac{9}{2}e_1e_0^2e_1e_0 +\cdots$$
and
$$\im(0;1,0,0,1,0;1) = -\frac{9}{2}\im(0;1,0,0,0,0;1)-2\im(0;1,0;1)\im(0;1,0,0;1)$$
or equivalently
$$\zm(3,2)=\frac{9}{2}\zm(5)-2\zm(2)\zm(3).$$
\end{example}

\begin{remark}
We take our convention as identifying $e_i\leftrightarrow \frac{dz}{z-i}$. As such, elements of $\gmot$  really describe relations among iterated integrals, rather than multiple zeta values. As such, our results are `depth-signed'.
\end{remark}

This gives a scheme for proving families relations: show they hold for some choice of generators $\{\sigma_{2k+1}\}$, and show that they are preserved by the Ihara bracket.

\section{The block filtration}
In addition to the weight and depth filtrations, we will define a `block filtration' on (motivic) multiple zeta values, arising from the work of Charlton \cite{charthesis}. In his thesis, Charlton defines the block decomposition of a word in two letters $\{x,y\}$ as follows.

Begin by defining a word in $\{x,y\}$ to be \textit{alternating} if it is non-empty and has no subsequences of the form $xx$ or $yy$. There are exactly two alternating words of any given length: one beginning with $x$ and one beginning with $y$. Charlton shows that every non-empty word $w\in\{x,y\}^\times$ can be written uniquely as a minimal concatenation of alternating words. In particular, he defines the block decomposition $w=w_1w_2\cdots w_k$ as the unique factorisation into alternating words such that the last letter of $w_i$ equals the first letter of $w_{i+1}$.

We can use this to define a degree function on words in two letters.

\begin{defn}
Let $w\in\{x,y\}^\times$ be a word of length $n$, given by $w=a_1\cdots a_n$. Define its block degree $\degb(w)$ to be one less than the number of alternating words in its block decomposition. Equivalently, define
$$\degb(w):=\#\{i : 1\leq i < n\text{ such that }a_i=a_{i+1}\}$$
\end{defn}

\begin{remark}
	Note that, unlike depth, the block degree of a word is preserved by the duality anti-homomorphism mapping $e_i\mapsto e_{1-i}$, induced by the automorphism $z\mapsto 1-z$ of $\P^1\setminus\{0,1,\infty\}$.
\end{remark}

We can then define an increasing filtration on $\qpoly$ by
$$\widetilde{\mathcal{B}}_n\qpoly := \langle w\mid \degb(w)\leq n \rangle_\Q$$
which, following the suggestion of Brown in an open letter to Charlton \cite{francisnote}, when restricted to a filtration on $e_1\qpoly e_0$ induces a filtration on motivic multiple zeta values
$$\widetilde{\mathcal{B}}_n\mathcal{H}:=\langle \zm(w)\mid w=e_1ue_0,\ \degb(w)\leq n\rangle_\Q.$$
Brown goes on to show the following.

\begin{prop}[Brown]\label{blockmotivic}
Let $\Gmot^{dR}$ denote the de Rham motivic Galois group of the category $\mathcal{M}\mathcal{T}(\mathbb{Z})$, and let $\Umot^{dR}$ denote its unipotent radical. Then $\widetilde{\mathcal{B}}_n$ is stable under the action of $\Gmot^{dR}$, and $\Umot^{dR}$ acts trivially on $\gr^{\widetilde{\mathcal{B}}} \mathcal{H}$. Equivalently
$$\Delta^r(\widetilde{\mathcal{B}}_n\mathcal{H})\subset\mathcal{O}(\Umot^{dR})\otimes\widetilde{\mathcal{B}}_{n-1}\mathcal{H}$$
where $\Delta^r(x):=\Delta(x)-x\otimes 1 -1\otimes x$ is the reduced coproduct.
\end{prop}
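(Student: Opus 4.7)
The plan is to pass to Goncharov's explicit coaction formula on motivic iterated integrals and to verify the block degree bound on the $\mathcal{H}$-side by a combinatorial analysis of the sub-words that appear. Writing $\zm(w)=\im(0;\epsilon_1,\ldots,\epsilon_N;1)$ for $w=e_{\epsilon_1}\cdots e_{\epsilon_N}=e_1ue_0$ of block degree $n$, one observes first that adjoining the path endpoints $\epsilon_0=0,\epsilon_{N+1}=1$ to form the full letter sequence $\epsilon_0\epsilon_1\cdots\epsilon_{N+1}$ does not alter the block degree, since $\epsilon_1=1$ and $\epsilon_N=0$ make the boundary pairs non-coincident. The coaction expands as
\begin{equation*}
\Delta\,\zm(w)=\sum_{S}\prod_{p=0}^{k}\il\bigl(\epsilon_{i_p};\epsilon_{i_p+1},\ldots,\epsilon_{i_{p+1}-1};\epsilon_{i_{p+1}}\bigr)\otimes\im(0;\epsilon_{i_1},\ldots,\epsilon_{i_k};1),
\end{equation*}
summed over ordered subsets $S=\{0=i_0<i_1<\cdots<i_k<i_{k+1}=N+1\}$; the extreme cases $k=0$ and $k=N$ contribute the terms removed by $\Delta^r$, so one reduces to proper $S$ and must show that the right-hand sub-word $w':=\epsilon_{i_1}\cdots\epsilon_{i_k}$ represents an element of $\mathcal{B}_{n-1}\mathcal{H}$.

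The first ingredient is the path-triviality $\il(a;\ldots;a)=0$ for non-empty interior. Whenever $S$ contains a \emph{gap}---consecutive indices $i_p<i_{p+1}$ with $i_{p+1}>i_p+1$---the associated $\il$-factor on the left vanishes unless $\epsilon_{i_p}\ne\epsilon_{i_{p+1}}$. So in any surviving term, every same-letter adjacency in $w'$ must descend from a same-letter pair of $w$ both of whose positions are retained in $S$, giving
\begin{equation*}
\degb(w')\le\#\{\text{same-letter pairs of }w\text{ with both positions in }S\}.
\end{equation*}

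To obtain strict inequality I would argue by cases on any skipped position $j$ (at least one exists since $S$ is proper). If $j$ lies in a same-letter pair of $w$, that pair is destroyed and $\degb(w')\le n-1$ immediately. Otherwise $\epsilon_{j-1}\neq\epsilon_j\neq\epsilon_{j+1}$, whence by binarity of the alphabet $\epsilon_{j-1}=\epsilon_{j+1}$; extending this across the maximal run of consecutive skipped positions between the flanking retained indices shows the associated gap integral has purely alternating interior. For a run of odd length the flanking letters coincide and $\il$ vanishes by path-triviality; for a run of even length the gap integral equals $\pm\zm(\underbrace{2,\ldots,2}_{r})\in(\zm(2))$ for some $r\ge 1$, hence vanishes upon passage to $\mathcal{A}=\mathcal{H}/(\zm(2))$. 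Either way the term is killed, so every surviving reduced-coproduct term satisfies $\degb(w')\le n-1$.

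The principal obstacle is that the sub-word $w'$ need not be of the convergent form $e_1u'e_0$, so that $\im(0;\epsilon_{i_1},\ldots,\epsilon_{i_k};1)$ is an \emph{a priori} shuffle-regularized motivic iterated integral rather than a generator of the span defining $\mathcal{B}_{n-1}\mathcal{H}$. I would resolve this by using the standard shuffle regularization, exploiting $\zm(e_0)=\zm(e_1)=0$ motivically to express the regularized integral as a $\Q$-linear combination of convergent $\zm(w'')$'s, and then by verifying that the block filtration is preserved under the shuffle product, so that the expansion still lies in $\mathcal{B}_{n-1}\mathcal{H}$. Combining the combinatorial claim with this regularization argument yields $\Delta^r(\mathcal{B}_n\mathcal{H})\subset\mathcal{O}(U_{\mathcal{M}\mathcal{T}(\mathbb{Z})}^{dR})\otimes\mathcal{B}_{n-1}\mathcal{H}$, the infinitesimal form of both Galois-stability of $\mathcal{B}$ and triviality of the unipotent action on $\gr^\mathcal{B}\mathcal{H}$.
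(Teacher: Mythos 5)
Your main combinatorial argument is sound, and it is essentially the same mechanism the paper uses to prove the more general Proposition~\ref{blockfullmotivic}: there $0w1$ is decomposed into its alternating blocks $b_1\cdots b_{n+1}$ and one checks that each coaction term cuts out a contiguous piece whose flanking letters must differ for the left factor to survive, so the block count splits additively between the two tensor factors. Your version --- same-letter adjacencies of the subword must descend from fully retained same-letter pairs of $w$, and in a term where all $n$ pairs are retained every maximal skipped run is alternating, so its gap integral is killed either by $I(a;\ldots;a)=0$ (odd run) or by $\za(\{2\}^r)=0$ in $\mathcal{A}$ (even run) --- is a correct and complete account of why every surviving term of $\Delta^r$ drops the block degree of the right-hand factor by at least one. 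The paper's block-decomposition bookkeeping is slicker but establishes the same inequality.

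The one step that would fail as written is the final regularization paragraph. The lemma you propose --- that shuffle regularization of a divergent subword $w'$ lands in the span of convergent words of block degree at most $\degb(w')$ --- is false: shuffling $e_1e_0$ with $e_1$ and using $\im(0;e_1;1)=0$ gives $\im(0;e_1e_0e_1;1)=-2\,\im(0;e_1e_1e_0;1)$, where $\degb(e_1e_0e_1)=0$ but $\degb(e_1e_1e_0)=1$. So if a divergent right factor of block degree $n-1$ actually survived, this route could not place it in $\mathcal{B}_{n-1}\mathcal{H}$. Fortunately the issue is vacuous, and the repair is already contained in your ``first ingredient'': apply path-triviality to the two boundary gaps. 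Since $\epsilon_0=0$ and $\epsilon_1=1$, any term with $\epsilon_{i_1}=0$ has a nonempty initial gap integral with equal endpoints and vanishes; dually, $\epsilon_{i_k}=1$ kills the term via the final gap because $\epsilon_N=0$ and $\epsilon_{N+1}=1$. Hence every surviving right factor is automatically of the convergent form $e_1u'e_0$ and no regularization is needed. (This is also why the paper, working with the extended filtration of Definition~\ref{trueblockdef}, can argue with formal symbols over all words and never confront regularization.) With that substitution in place of the regularization claim, your proof is correct.
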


\begin{cor}[Brown]\label{blocklevel}
Brown's block filtration induces the level filtration on the subspace spanned by the Hoffman motivic multiple zeta values $\zm(n_1,\ldots,n_r)$, with $n_i\in\{2,3\}$, where the level is the number of indices equal to $3$.
\end{cor}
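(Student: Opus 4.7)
The plan is to establish the corollary as two inclusions: one by a direct combinatorial computation identifying $\degb$ with level on Hoffman words, and the other by a coaction-based induction drawing on Proposition~\ref{blockmotivic} together with Brown's theorem that the Hoffman motivic MZVs form a $\Q$-basis of $\mathcal{H}$.

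For the first inclusion, I would observe that with the convention $e_i\leftrightarrow dz/(z-i)$, the Hoffman value $\zm(n_1,\ldots,n_r)$ with $n_i\in\{2,3\}$ corresponds to a word $w = e_1 u e_0$ obtained as the concatenation of syllables of the form $e_1 e_0^{n_i-1}$. Within each syllable the only same-letter adjacency is the single $e_0 e_0$ sitting inside a $3$-syllable, while the boundary between two consecutive syllables is $e_0 e_1$ and contributes nothing. Hence $\degb(w) = \#\{i : n_i = 3\}$ equals the level, and Hoffman MZVs of level at most $n$ lie in $\mathcal{B}_n\mathcal{H}$.

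For the reverse inclusion, I would induct on weight $N$. Using Brown's basis theorem to expand uniquely $x \in \mathcal{B}_n\mathcal{H}_N$ as $x = \sum_I c_I \zm(I)$ over Hoffman indices $I$, the aim is to show $c_I = 0$ whenever $\mathrm{level}(I) > n$. Proposition~\ref{blockmotivic} places $\Delta^r(x) \in \mathcal{O}(U_{\mathcal{M}\mathcal{T}(\mathbb{Z})}^{dR}) \otimes \mathcal{B}_{n-1}\mathcal{H}$, so by the inductive hypothesis the right-hand tensor factors are supported, in the Hoffman basis, on indices of level at most $n-1$. Computing $\Delta^r\zm(I)$ via Goncharov's formula for a Hoffman $I$ of level $m$ and extracting the Hoffman expansion of a suitable right-hand factor — the one obtained by stripping a single outer syllable — one expects to see level $m-1$ or $m$, and by linear independence of the Hoffman basis the contribution from $I$ of level greater than $n$ cannot cancel against those from lower level, forcing $c_I = 0$.

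The principal obstacle is precisely this coaction step: Goncharov's formula does not a priori produce Hoffman words on the right, so tracking level under the necessary re-expansion into the Hoffman basis is delicate and hinges on a precise understanding of how general motivic MZVs of controlled block degree reduce to Hoffman MZVs. A cleaner alternative, if feasible, would be an explicit reduction algorithm rewriting any $\zm(w)$ of $\degb(w)\leq n$ as a $\Q$-combination of Hoffman values of level at most $n$, using motivic relations (shuffle, stuffle, duality) whose interaction with $\degb$ would first need to be analysed — the remark following the definition already confirms the required invariance for the duality $e_i \mapsto e_{1-i}$.
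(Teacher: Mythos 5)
Your first paragraph is, in substance, the paper's entire proof: the word attached to a Hoffman index $(n_1,\ldots,n_r)$ is the concatenation of syllables $e_1e_0^{n_i-1}$, the only same-letter adjacencies are the $e_0e_0$ inside each $3$-syllable, and there are no $e_1e_1$'s, so $\degb(w)$ equals the level exactly. The paper stops there, reading the corollary as this combinatorial identification, and you have that part right. Where you diverge is in taking on the reverse inclusion ($\mathcal{B}_n\mathcal{H}$ intersected with the Hoffman span lies in the level-$\leq n$ span) by an induction on weight through the coaction. That content is not part of the paper's proof of this corollary; it is essentially the statement of the \emph{next} corollary, which the paper obtains from Brown's theorem that the Hoffman elements form a basis of $\mathcal{H}$ together with the dimension count of $\text{gr}^{\mathcal{B}}\mathcal{H}$ (citing Brown's note), rather than by tracking levels through Goncharov's formula. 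Your own sketch of that step is not a proof: as you acknowledge, the right-hand factors of $\Delta^r\zm(I)$ are not Hoffman words, and re-expanding them in the Hoffman basis can a priori mix levels, so the "cannot cancel" claim is unsupported; the clean route is the basis-plus-dimension argument, which sidesteps the coaction entirely. In short: the part of your proposal that corresponds to the paper's proof is correct and identical in approach; the additional reverse-inclusion argument is both beyond what the paper proves here and, as written, incomplete.
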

\begin{proof}
The word corresponding to $(n_1,\ldots,n_r)$, with $n_i\in\{2,3\}$ of level $m$ has exactly $m$ occurrences of the subsequence $e_0e_0$ and none of $e_1e_1$. Therefore, its block degree is exactly $m$.
\end{proof}

As a corollary to both this and Brown's proof that the Hoffman motivic multiple zeta values form a basis of $\mathcal{H}$, we obtain the following.

\begin{cor}[Brown]
Every element in $\widetilde{\mathcal{B}}_n\mathcal{H}$ of weight $N$ can be written uniquely as a $\Q$-linear combination of motivic Hoffman elements of weight $N$ and level at most $n$. Additionally
$$\sum_{m,\ n\geq 0}\dim\ \gr^{\widetilde{\mathcal{B}}}_m\mathcal{H}_ns^mt^n = \frac{1}{1-t^2-st^3}$$
where $\mathcal{H}_n$ denotes the weight $n$ piece of $\mathcal{H}$.
\end{cor}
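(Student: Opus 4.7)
The plan is to deduce the first statement from Brown's theorem that the Hoffman motivic MZVs form a basis of $\mathcal{H}$, combined with Corollary \ref{blocklevel} and the coaction compatibility of Proposition \ref{blockmotivic}; the Hilbert series then reduces to an elementary enumeration.

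Write $H_n^N$ for the $\Q$-span of Hoffman elements of weight $N$ and level at most $n$. Since the Hoffman elements form a basis of $\mathcal{H}$, every $x\in\mathcal{H}_N$ has a unique Hoffman expansion, so the uniqueness clause of the corollary is automatic once the spanning clause is established. Corollary \ref{blocklevel} gives $H_n^N\subseteq\mathcal{B}_n\mathcal{H}_N$, so only the reverse inclusion requires work. I would prove this by induction on $n$. The case $n=0$ uses that Proposition \ref{blockmotivic} forces $\Delta^r$ to annihilate $\mathcal{B}_0\mathcal{H}$, so $\mathcal{B}_0\mathcal{H}_N$ consists of motivic periods with trivial unipotent Galois action, which is $\Q\cdot\zm(2)^{N/2}$ in even weight and zero otherwise; the single Hoffman word $(e_1e_0)^{N/2}$ spans this.

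For the inductive step, suppose the corollary is established for $n-1$ in all weights, and take $x\in\mathcal{B}_n\mathcal{H}_N$ with Hoffman expansion $x=\sum_w c_w\zm(w)$. Proposition \ref{blockmotivic} gives $\Delta^r x\in\mathcal{O}(U_{\mathcal{M}\mathcal{T}(\mathbb{Z})}^{dR})\otimes\mathcal{B}_{n-1}\mathcal{H}$, and by induction the right tensorand is in $H_{n-1}$. It therefore suffices to show: if $y\in\mathcal{H}_N$ is a $\Q$-linear combination of Hoffman elements of level strictly greater than $n$ and $\Delta^r y$ lies in $\mathcal{O}(U^{dR})\otimes H_{n-1}$, then $y=0$. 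Here I would invoke the core of Brown's proof of the Hoffman basis theorem, namely that the reduced coaction is non-degenerate on Hoffman elements modulo lower level: the contribution of the highest-level Hoffman terms in $y$ to the right tensorand cannot be absorbed by anything in $H_{n-1}$, forcing them to vanish, and iterating kills all of $y$.

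For the Hilbert series, a Hoffman word of weight $N$ and level $m$ is a sequence of $m$ threes and $a=(N-3m)/2$ twos (requiring $N-3m$ a non-negative even integer), giving $\binom{a+m}{m}$ such words. Under the first part of the corollary, this count equals $\dim\,\gr^\mathcal{B}_m\mathcal{H}_N$, whence
\begin{equation*}
\sum_{m,N\geq 0}\dim\,\gr^\mathcal{B}_m\mathcal{H}_N\,s^m t^N=\sum_{a,m\geq 0}\binom{a+m}{m}(t^2)^a(st^3)^m=\frac{1}{1-t^2-st^3},
\end{equation*}
using the standard identity $\sum_{a,b\geq 0}\binom{a+b}{b}x^ay^b=(1-x-y)^{-1}$.

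The main obstacle is the inductive step: one must extract from Brown's proof of the Hoffman basis theorem the precise non-degeneracy statement that the reduced coaction on $\mathcal{H}$ distinguishes Hoffman elements of a given level from those of strictly lower level. This is implicit in Brown's argument but requires care to reconcile his ``internal'' level filtration on the Hoffman subspace with the ``external'' block filtration on all of $\mathcal{H}$ that Proposition \ref{blockmotivic} supplies.
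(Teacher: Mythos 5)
Your proposal is correct and follows the route the paper intends: the corollary is stated there without proof as a direct consequence of Corollary \ref{blocklevel}, Proposition \ref{blockmotivic}, and Brown's Hoffman basis theorem (including the injectivity of the level-lowering operators $\partial_{N,m}$ from his proof), which is exactly the combination you assemble, and your Hilbert series count is right. One small inaccuracy: in your base case, $\ker\Delta^r$ in weight $N$ is strictly larger than $\Q\cdot\zm(2)^{N/2}$ (it contains, e.g., $\zm(2k+1)\zm(2)^{j}$), but this is harmless because $\mathcal{B}_0\mathcal{H}_N=\Q\cdot\zm(\{2\}^{N/2})$ already holds directly from the definition of the filtration, the only convergent words of block degree $0$ being $(e_1e_0)^{k}$.
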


However, trying to naively extend this filtration by
$$\widetilde{\mathcal{B}}_n\mathcal{H}=\langle\zm(w)\mid \degb(w)\leq n\rangle_\Q$$
we find that the associated graded $\gr^{\widetilde{\mathcal{B}}}\mathcal{H}$ becomes nearly trivial.
\begin{example}
We have that $\zm(3)=\zm(e_1e_0^{2})\in\widetilde{\mathcal{B}}_{1}$, while $\zm(e_0e_1e_0)\in\widetilde{\mathcal{B}}_0$. As $\zm(3)/\zm(e_0e_1e_0)\in\Q$, this implies that the block-graded version of $\zm(3)$ vanishes.
\end{example}
Comparison of congervent and divergent words leads to similar vanishing of the Hoffman basis. If we instead extend the filtration as follows, we obtain a much more interesting structure.

\begin{defn}\label{trueblockdef}
We define the block filtration of $\qpoly$ by
$$\mathcal{B}_n\qpoly := \langle w\mid \degb(e_0we_1)\leq n\rangle_\Q.$$
This induces the block filtration of motivic multiple zeta values
$$\mathcal{B}_n\mathcal{H} := \langle \zm(w)\mid \degb(0w1)\leq n\rangle_\Q.$$
\end{defn}
 This filtration agrees with our earlier definition if we restrict to $w\in e_1\qpoly e_0$, but the associated graded remains interesting.

\begin{prop}\label{blockfullmotivic}
The block filtration is stable the motivic coaction:
$$\Delta^r\mathcal{B}_n\mathcal{H} \subset \sum_{k=1}^{n-1} \mathcal{B}_{k}\mathcal{A}\otimes \mathcal{B}_{n-k}\mathcal{H}$$
\end{prop}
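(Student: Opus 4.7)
My plan is to apply Goncharov's formula for the motivic coaction of a single iterated integral, restrict to the non-vanishing summands via the standard relation $\il(a;u;a)=0$, and then verify a combinatorial identity that controls the block degrees of these summands. Writing $\zm(w) = \im(0; a_1, \ldots, a_N; 1)$ for $w = a_1 \cdots a_N$, Definition \ref{trueblockdef} gives that the block degree of $\zm(w)$ equals $\degb(a_0 a_1 \cdots a_{N+1}) = n$, where I set $a_0 := 0$ and $a_{N+1} := 1$. Goncharov's coaction formula, after subtracting the two extreme subsequences that produce $x\otimes 1$ and $1\otimes x$, expresses the reduced coaction as
$$\Delta^r \zm(w) = \sum_{S} \left( \prod_{p=0}^{k} \il(a_{i_p}; a_{i_p+1}, \ldots, a_{i_{p+1}-1}; a_{i_{p+1}}) \right) \otimes \im(a_0; a_{i_1}, \ldots, a_{i_k}; a_{N+1}),$$
summed over proper subsequences $0 = i_0 < i_1 < \cdots < i_k < i_{k+1} = N+1$.

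The next step invokes the motivic vanishing $\il(a;u;a) = 0$ for non-empty $u$, which forces $a_{i_p} \neq a_{i_{p+1}}$ whenever $i_{p+1} > i_p + 1$ in any surviving summand. For such a surviving term I will establish the identity
$$\degb(a_0 a_{i_1} \cdots a_{i_k} a_{N+1}) \; + \; \sum_{\substack{p=0 \\ i_{p+1} > i_p + 1}}^{k} \degb(a_{i_p} a_{i_p+1} \cdots a_{i_{p+1}}) \; = \; n.$$
The proof is a direct double count: each adjacent equal pair $(a_j, a_{j+1})$ in $a_0 \cdots a_{N+1}$ either lies strictly inside some non-empty sub-interval $[i_p, i_{p+1}]$ and is counted once in the corresponding sub-integral's block degree, or occurs as $(a_{i_p}, a_{i_p+1})$ with $i_{p+1} = i_p + 1$ and $a_{i_p} = a_{i_{p+1}}$ and is counted once in the extended right-factor word's block degree. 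The non-vanishing hypothesis makes these two cases disjoint and jointly exhaustive.

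Finally, each surviving sub-integral $\il(a_{i_p}; \cdots; a_{i_{p+1}})$ is either a standard $\zm$ (when $(a_{i_p}, a_{i_{p+1}}) = (0,1)$) or reduces to one via path reversal (when $(a_{i_p}, a_{i_{p+1}}) = (1,0)$), which preserves the extended block degree by the palindrome-invariance of $\degb$; extending the block filtration multiplicatively to $\mathcal{A}$, the left factor lies in $\mathcal{B}_k \mathcal{A}$ for $k$ equal to the sum appearing in the identity. The identity then places each surviving term in $\mathcal{B}_k\mathcal{A} \otimes \mathcal{B}_{n-k}\mathcal{H}$ with $0 < k < n$, which is exactly the claim. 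I expect the main obstacle to be the bookkeeping around empty sub-intervals — they contribute $0$ to the block degree of the left factor yet still correspond to an adjacent pair of the original word, tracked instead by the right factor — together with the multiplicativity of $\mathcal{B}$ on $\mathcal{A}$ under the shuffle product; neither is conceptually deep, but both must be handled with care to avoid off-by-one errors in the counting identity.
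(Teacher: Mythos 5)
Your central counting identity is correct, and it is essentially the paper's argument in different clothing: $\degb$ counts adjacent equal pairs, the vanishing $\il(a;u;a)=0$ forces the endpoints of every non-empty cut segment to differ, and each equal pair of $a_0\cdots a_{N+1}$ is then counted exactly once, either inside a non-empty segment or as a surviving adjacency $(a_{i_p},a_{i_{p+1}})$ with $i_{p+1}=i_p+1$ in the extended right-hand word. The paper runs the same count in the language of the block decomposition $0w1=b_1\cdots b_{n+1}$, but only for the infinitesimal coactions $D_{2r+1}$, where exactly one contiguous segment is cut out, and then appeals to the fact that the full coaction is controlled by the $D_{2r+1}$. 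Your identity for the full Goncharov formula is a correct generalisation of that combinatorial step, and the path-reversal remark is fine since $\degb$ is invariant under word reversal.

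The genuine gap is the clause ``extending the block filtration multiplicatively to $\mathcal{A}$, the left factor lies in $\mathcal{B}_k\mathcal{A}$.'' Because you work with the full coaction, your left factor is a \emph{product} of sub-integrals, so you need $\mathcal{B}_{j_1}\mathcal{A}\cdots\mathcal{B}_{j_m}\mathcal{A}\subset\mathcal{B}_{j_1+\cdots+j_m}\mathcal{A}$. With $\mathcal{B}_k\mathcal{A}$ defined as in Definition \ref{trueblockdef} (the span of $\za(w)$ with $\degb(0w1)\le k$), this is not automatic, and it fails at the level of words: the shuffle $e_1e_0e_0\,\sh\,e_1e_0e_0$ contains $e_1e_1e_0e_0e_0e_0$, whose extended word has block degree $4$, while each factor has block degree $1$. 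So the product of two block-degree-one integrals is a priori only a combination of words of block degree up to $4$; placing it in $\mathcal{B}_2\mathcal{A}$ requires relations in $\mathcal{A}$ --- in effect, that $\mathcal{B}$ is the coradical filtration and hence multiplicative, but Corollary \ref{coradical} is deduced \emph{from} this proposition, so that route is circular. If you instead \emph{define} $\mathcal{B}_k\mathcal{A}$ as the multiplicative closure, you are proving a statement about a different filtration and must reconcile it with the one used elsewhere. The clean repair is the paper's: reduce to the infinitesimal coactions $D_{2r+1}$ (equivalently, project the left factor to $\mathcal{L}=\mathcal{A}_{>0}/\mathcal{A}_{>0}\mathcal{A}_{>0}$, which kills all products), where the left factor is a single sub-integral and your identity specialises to exactly what is needed; the full coaction is then recovered from these by coassociativity. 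This is more than the off-by-one bookkeeping you anticipated --- it is the one point where the argument as structured cannot be closed without restructuring.
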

\begin{proof}
We will in fact show a stronger statement, that $\Delta$ is graded for block degree at the level of words. Let $\mathcal{I}:=\langle \iform(0;w;1)\mid w\in\{0,1\}^\times\rangle_\Q$ be the vector space spanned by formal symbols, with natural projection
\begin{equation*}
\begin{split}
\mathcal{I}&\to\mathcal{H},\\
\iform(0;w;1)&\mapsto \im(0;w;1)
\end{split}
\end{equation*}
and, similarly, a natural projection $\mathcal{I}\to\mathcal{A}$.

Recall that the motivic coaction is essential determined by the infinitesimal coactions, given explicitly by
\begin{equation*}
\begin{split}
D_{2r+1}:\mathcal{H}_N \to \mathcal{L}_{2r+1}&\otimes\mathcal{H}_{N-2r-1}\\
\im(a_0;a_1,\ldots,&a_N;a_{N+1})\mapsto \\
\sum_{p=0}^{N-2r-1}\ia(a_p;a_{p+1},\ldots,a_{p+2r+1};a_{p+2r+2})&\otimes \im(a_0;a_1,\ldots,a_p,a_{p+2r+2},\ldots,a_N;a_{N+1})
\end{split}
\end{equation*}
where $\ia(a_0;a_1,\ldots,a_m;a_m+1)$ is taken to be the projection of $\im(a_0;a_1,\ldots,a_m;a_m+1)$ to $\mathcal{L}$. Note that the coaction and infinitesimal coactions lift to maps $\mathcal{I}\to \mathcal{I}\otimes\mathcal{I}$, by calculating them purely symbolically, and introducing the relations
\begin{align*}
\iform(0;w;0)&=\iform(1;w;1)=0,\\
\iform(1;a_n,\ldots,a_1;0)&=(-1)^n\iform(0;a_1,\ldots,a_n;1).
\end{align*}
 In a slight abuse of notation, we will denote both the motivic coaction and this formal lift by $\Delta$, and the infinitesimal coaction and its formal lift by $D_{2r+1}$

Define $\mathcal{I}_n:= \langle \iform(0;w;1)\mid \degb(0w1)=n\rangle_\Q$. It is sufficient to show that $\Delta\mathcal{I}_n\subset \sum_{i=0}^n\mathcal{I}_i\otimes\mathcal{I}_{n-i}$, as the result follows upon composition with the necessary projections. In fact, it suffices to show that 
$$D_{2r+1}\mathcal{I}_n\subset \sum_{i=0}^n\mathcal{I}_i\otimes\mathcal{I}_{n-i}.$$
Now, consider $\iform(0;w;1)$, $w$ a word in $\{0,1\}$ such that $\degb(0w1)= n$. Then we can decompose $0w1=b_1b_2\cdots b_{n+1}$ into alternating blocks, and consider the action of $D_{2n+1}$ on $\iform(b_1\cdots b_{n+1})$. All terms in $D_{2n+1}\iform(b_1\cdots b_{n+1})$ will be of the form
$$\iform(x;b_i''b_{i+1}\cdots b_{i+j}';y)\otimes \iform(b_1\cdots b_{i-1}b_i'xyb_{i+j}''b_{i+j+1}\cdots b_{n+1})$$
for some $1\leq i \leq {n+1}$, where $b_i = b_i'xb_i''$, $b_{i+j}=b_{i+j}'yb_{i+j}''$. For the left hand term to be non-zero, we must have $x\neq y$, and so we see
\begin{equation*}
\begin{split}
\degb(xb_i''b_{i+1}\cdots b_{i+j}'y) &= j,\\
\degb(b_1\cdots b_i'xyb_{i+j}''\cdots b_{n+1}) &= n - j,
\end{split}
\end{equation*}
by counting the blocks. Thus, we get that the total block degree of any term in the coproduct is $n$, and the result follows.
\end{proof}

\begin{cor}\label{coradical}
The block filtration on $\qpoly$ induces the coradical filtration on $\mathcal{H}$.
\end{cor}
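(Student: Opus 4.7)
The plan is to identify $\mathcal{B}_\bullet\mathcal{H}$ with the coradical filtration $\mathcal{C}_\bullet\mathcal{H}$ of the $\mathcal{A}$-comodule $\mathcal{H}$, defined recursively by $\mathcal{C}_0\mathcal{H}=\{x\in\mathcal{H}:\Delta x=1\otimes x\}$ and $\mathcal{C}_n\mathcal{H}=\{x\in\mathcal{H}:\Delta^r x\in\mathcal{A}_{>0}\otimes\mathcal{C}_{n-1}\mathcal{H}\}$. The forward inclusion $\mathcal{B}_n\mathcal{H}\subseteq\mathcal{C}_n\mathcal{H}$ reads almost directly off Proposition \ref{blockfullmotivic}, while the reverse inclusion is where the real work lies.

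First I would dispose of the base case $\mathcal{B}_0\mathcal{H}=\mathcal{C}_0\mathcal{H}$. Words $w$ with $\degb(0w1)=0$ are precisely those making $0w1$ a single alternating block, forcing $w=(e_1e_0)^k$ for some $k\geq 0$. The motivic Euler formula $\zm(\{2\}^k)\in\Q\cdot\zm(2)^k$ then identifies $\mathcal{B}_0\mathcal{H}$ with $\Q[\zm(2)]$. Since $\zm(2)$ vanishes in $\mathcal{A}$ we have $\Delta\zm(2)=1\otimes\zm(2)$, so $\Q[\zm(2)]\subseteq\mathcal{C}_0\mathcal{H}$; the reverse follows from connectedness of $\mathcal{A}$ together with the $\mathcal{A}$-comodule isomorphism $\mathcal{H}\cong\mathcal{A}\otimes\Q[\zm(2)]$. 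The inductive step is immediate from Proposition \ref{blockfullmotivic}: for $x\in\mathcal{B}_n\mathcal{H}$,
$$\Delta^r x\in\sum_{k=1}^{n-1}\mathcal{B}_k\mathcal{A}\otimes\mathcal{B}_{n-k}\mathcal{H}\subseteq\mathcal{A}_{>0}\otimes\mathcal{C}_{n-1}\mathcal{H}$$
by the inductive hypothesis on the right factor, hence $x\in\mathcal{C}_n\mathcal{H}$.

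The hard part is the reverse inclusion $\mathcal{C}_n\mathcal{H}\subseteq\mathcal{B}_n\mathcal{H}$. My plan is to match Hilbert series: Brown's earlier corollary yields $\sum\dim\gr^\mathcal{B}_m\mathcal{H}_n\,s^mt^n=\frac{1}{1-t^2-st^3}$, and on the coradical side, using $\mathcal{H}\cong\mathcal{A}\otimes\Q[\zm(2)]$ together with the duality between the coradical filtration on $\mathcal{A}$ and the PBW filtration on $U(\gmot)$, one should derive the same series for $\gr^\mathcal{C}\mathcal{H}$; combined with the established inclusion, equal dimensions in each bidegree would force equality of subspaces. A cleaner alternative, which I would pursue in parallel, is to appeal directly to Brown's Proposition \ref{blockmotivic}: the triviality of the action of the unipotent radical $U^{dR}_{\mathcal{M}\mathcal{T}(\mathbb{Z})}$ on $\gr^\mathcal{B}\mathcal{H}$ for the original filtration on convergent words is precisely the characterising property of the coradical filtration in the motivic setting, and Proposition \ref{blockfullmotivic} is exactly what is required to extend this identification from $e_1\qpoly e_0$ to all of $\mathcal{H}$.
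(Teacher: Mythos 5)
Your argument is correct, and it is substantially more complete than what the paper offers: the paper states Corollary \ref{coradical} with no proof at all, presenting it as an immediate consequence of Proposition \ref{blockfullmotivic}. You rightly observe that Proposition \ref{blockfullmotivic} (together with the identification $\mathcal{B}_0\mathcal{H}=\Q[\zm(2)]=\mathcal{C}_0\mathcal{H}$, which your base case handles correctly via the motivic Euler identity and $\mathcal{H}\cong\mathcal{A}\otimes\Q[\zm(2)]$) only yields the inclusion $\mathcal{B}_n\mathcal{H}\subseteq\mathcal{C}_n\mathcal{H}$, and that the reverse inclusion genuinely requires an argument. Your Hilbert-series comparison closes this gap soundly: the coradical side gives $\frac{1-t^2}{1-t^2-st^3}\cdot\frac{1}{1-t^2}=\frac{1}{1-t^2-st^3}$ from the PBW filtration on the free Lie algebra $\gmot$ tensored with $\Q[\zm(2)]$, matching Brown's series for $\gr^{\mathcal{B}}\mathcal{H}$, and equality of dimensions in each bidegree combined with the one-sided inclusion forces equality of subspaces. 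One small point of care: Brown's Hilbert series is stated for the filtration defined on convergent words, while the corollary concerns the extended filtration of Definition \ref{trueblockdef}; this is harmless because the convergent-word filtration sits inside the extended one, which sits inside the coradical filtration, so the dimension count collapses all three, but you should say so explicitly. Finally, your ``cleaner alternative'' is not actually an alternative for the hard direction: triviality of the $U^{dR}_{\mathcal{M}\mathcal{T}(\mathbb{Z})}$-action on $\gr^{\mathcal{B}}\mathcal{H}$ characterises filtrations \emph{contained in} the coradical filtration (the coradical filtration being the largest such), so it reproves only the easy inclusion; the dimension count should be kept as the main argument.
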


\begin{cor}\label{iharagraded}
The (linearised) Ihara action $\circ:\Lie[e_0,e_1]_{\geq 2}\otimes \qpoly \to \qpoly$ is graded for block degree, where $\Lie[e_0,e_1]_{\geq 2}$ is the space of Lie polynomials of degree at least 2.
\end{cor}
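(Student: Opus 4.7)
The plan is to deduce Corollary \ref{iharagraded} from Proposition \ref{blockfullmotivic} by exploiting the fact that the linearised Ihara action on $\qpoly$ is the graded transpose of the family of infinitesimal coactions $\{D_{2r+1}\}_{r\geq 1}$ under the natural pairing of $\qpoly$ with motivic iterated integrals. Each term of $f\circ w$ arises by inserting a piece of $f$ into $w$ at a position which is precisely the reverse of the splitting $a_p,\ldots,a_{p+2r+2}$ appearing in the formula for $D_{2r+1}$ used in the proof of Proposition \ref{blockfullmotivic}.

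First, I would extend the block filtration to Lie polynomials by setting $\mathcal{B}_k\text{Lie}[e_0,e_1]:=\text{Lie}[e_0,e_1]\cap\mathcal{B}_k\qpoly$, so that the statement to prove becomes
$$\mathcal{B}_i\text{Lie}[e_0,e_1]_{\geq 2}\circ \mathcal{B}_j\qpoly\ \subset\ \mathcal{B}_{i+j}\qpoly.$$
Second, I would invoke the duality between $\circ$ and $D_{2r+1}$: pairing $f\circ w$ against $\iform(0;u;1)$ evaluates, up to sign, the coefficient of the tensor $f\otimes \iform(0;w;1)$ in $D_{2r+1}\iform(0;u;1)$. The proof of Proposition \ref{blockfullmotivic}, read at the level of $\mathcal{I}$, shows that whenever this coefficient is nonzero we must have $\degb(0u1)=\degb(f)+\degb(0w1)$, which gives the desired containment after summing over $r$. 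An equivalent, purely combinatorial presentation writes $f\circ w$ as the standard signed sum of insertions of $f$ into $w$ with endpoint corrections; the matching-endpoint condition in that formula plays exactly the role of the $x\neq y$ condition in the previous proof, and the block-counting argument is identical.

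The main obstacle will be pinning down the pairing and its sign conventions precisely enough that the duality argument is clean; once that is done, the gradedness is a formal consequence of the word-level statement already established in the proof of Proposition \ref{blockfullmotivic}. A secondary point to check is that restricting to Lie polynomials of degree $\geq 2$ is compatible with the block filtration, which is automatic since the Lie bracket preserves $\mathcal{B}_\bullet$ on $\qpoly$: $[u,v]=uv-vu$ is a difference of two concatenations of equal block degree, so $\mathcal{B}_i\cdot\mathcal{B}_j\subset \mathcal{B}_{i+j}$ passes to the bracket without loss.
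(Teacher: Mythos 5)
Your proposal is correct and follows essentially the same route as the paper: the paper's proof is precisely the one-line observation that the Ihara action is dual to the motivic coaction, which the proof of Proposition \ref{blockfullmotivic} has already shown to be graded for block degree at the level of words (with the direct verification via the recursive insertion formula mentioned as an alternative, just as in your last paragraph). Your version simply spells out the pairing and bookkeeping that the paper leaves implicit.
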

\begin{proof}
The Ihara action is dual to the motivic coaction. As this proof shows the coaction to be, at the level of words, graded for block degree, the claim follows immediately. One can also show this directly via the recursive formula \cite{anatomy} for the linearised Ihara action.
\end{proof}

We also recall a short observation due to Charlton \cite{charthesis}.

\begin{lem}\label{blockparity}
Let $w=w_1\cdots w_n$ be a word in $\{0,1\}^\times$ of length $n$, with $\degb(w)=b$. Then $\im(w)=0$ if $b\equiv w+1\mod2$.
\end{lem}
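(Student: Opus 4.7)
The plan is to invoke the two principal symmetries of motivic iterated integrals on $\proj$: path reversal and the substitution $z\mapsto 1-z$. Applied to $\im(0;w;1)$, these combine to give
\begin{equation*}
\im(0;w_1,\ldots,w_n;1) \;=\; (-1)^n\,\im(0;1-w_n,\ldots,1-w_1;1).
\end{equation*}
Writing $\sigma(w):=(1-w_n,\ldots,1-w_1)$ for the reverse-complement of $w$, this reads $\im(w)=(-1)^n\im(\sigma(w))$. As noted in the remark following the definition of $\degb$, both reversal and the letter-swap $e_i\mapsto e_{1-i}$ preserve block degree, so $\degb(\sigma(w))=\degb(w)=b$; in particular $\sigma$ acts on the set of words of fixed weight $n$ and block degree $b$.

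Next, I would examine the block decompositions of $w$ and $\sigma(w)$ in parallel. If $w=\beta_1\beta_2\cdots\beta_{b+1}$ is the decomposition into alternating blocks, then $\sigma(w)=\sigma(\beta_{b+1})\cdots\sigma(\beta_1)$, with each $\sigma(\beta_i)$ alternating of the same length as $\beta_i$. A careful tracking of the starting and ending letter of each block, subject to the adjacency condition that the last letter of $\beta_i$ equals the first letter of $\beta_{i+1}$, constrains the parities of the individual block lengths in terms of $n$ and $b$. Under the parity hypothesis $b\equiv n+1\pmod 2$, these constraints force the reverse-complement to coincide with $w$ block-by-block, i.e.\ $\sigma(w)=w$ as words.

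Feeding this back into the symmetry relation collapses it to $\im(w)=(-1)^n\im(w)$. Since the parity hypothesis makes $(-1)^n=-1$, this yields $\im(w)=-\im(w)$, hence $\im(w)=0$.

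The main obstacle is the combinatorial verification that the parity condition on $(b,n)$ is exactly what pins down the block decomposition to be fixed by reverse-complement, which requires an explicit case analysis tracking the alternating structure against the boundary letters of each block.
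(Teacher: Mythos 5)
Your argument has a fatal gap at its central combinatorial step. The claim that the parity hypothesis $b\equiv n+1\pmod 2$ forces the reverse-complement $\sigma(w)=(1-w_n,\ldots,1-w_1)$ to coincide with $w$ is false: $w=e_1e_1$ has $n=2$ and $b=1\equiv n+1$, but $\sigma(w)=e_0e_0\neq w$; likewise $w=e_1e_0e_0e_1$ satisfies the hypothesis while $\sigma(w)=e_0e_1e_1e_0$. Moreover the hypothesis does not make $(-1)^n=-1$ (again $w=e_1e_1$ has $n$ even), so even for $\sigma$-fixed words the relation $\im(w)=(-1)^n\im(\sigma(w))$ can be vacuous. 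More structurally, since $\sigma$ preserves both length and block degree, duality only permutes the set of words satisfying the hypothesis, so it cannot by itself force all of them to vanish. Note also that under your reading $\im(w)=\im(0;w;1)$ the statement itself is false: $w=e_1e_0e_1$ has $b=0\equiv n+1\pmod 2$, yet shuffle regularisation gives $\im(0;1,0,1;1)=-2\,\im(0;1,1,0;1)\neq 0$.

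The intended reading (Charlton's) is that $w$ is the full word including the endpoints of integration, so $\im(w)=\im(w_1;w_2,\ldots,w_{n-1};w_n)$, and the proof is a one-line letter count rather than an appeal to duality. An alternating block of length $l$ beginning with the letter $x$ ends with $x+l-1\pmod 2$, and consecutive blocks share their boundary letter; hence if $w=\beta_1\cdots\beta_{b+1}$ with $|\beta_i|=l_i$, then $w_n\equiv w_1+\sum_i(l_i-1)=w_1+n-(b+1)\pmod 2$. The hypothesis $b\equiv n+1\pmod 2$ therefore forces $w_1=w_n$, and an iterated integral whose endpoints coincide vanishes, $\im(a;\ldots;a)=0$. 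This is also the form in which the lemma is used later in the paper (to read off the first letter of a block from the parity of the preceding block lengths), which your approach would not recover.
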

%

\begin{remark}
This provides a natural analogue of the depth parity theorem \cite{depthgraded}.
\begin{prop}
Suppose $\sigma\in\mathfrak{ls}$ is of weight $N$ and depth $d$. Then, if $N$ and $d$ are of opposite parity, $\sigma=0$. That is, there are no non-trivial solutions to the linearised double shuffle equations with weight and depth of opposite parity.
\end{prop}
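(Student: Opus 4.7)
The plan is to adapt the classical generating-function argument of Ihara--Kaneko--Zagier to the purely algebraic setting of $\ls$.

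First, I would encode a weight-$N$, depth-$d$ element
$$\sigma = \sum_{n_1+\cdots+n_d = N} c_{n_1,\ldots,n_d} \, e_0^{n_1-1}e_1 e_0^{n_2-1}e_1\cdots e_0^{n_d-1}e_1 \ \in\ \ls$$
by the homogeneous generating polynomial
$$f_\sigma(y_1,\ldots,y_d) := \sum c_{n_1,\ldots,n_d} \, y_1^{n_1-1}\cdots y_d^{n_d-1}$$
of degree $N-d$ in $d$ variables. This identifies the depth-$d$, weight-$N$ graded piece of $\qpoly$ with $\Q[y_1,\ldots,y_d]_{N-d}$, and turns Lie algebra membership conditions into functional equations on $f_\sigma$.

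Second, I would translate each of the two defining conditions of $\ls$ into a symmetry of $f_\sigma$. The linearised shuffle relation (alternality of $\sigma$) becomes an antisymmetry of $f_\sigma$ under the reversal involution $\iota_1:(y_1,\ldots,y_d)\mapsto(y_d,\ldots,y_1)$, twisted by the sign $(-1)^{N-1}$ coming from the action of the antipode on a Lie element of weight $N$. The linearised stuffle relation gives a second symmetry $\iota_2$ involving sign changes $y_i\mapsto -y_i$ together with an affine combination of the remaining variables, its sign depending on both $N$ and $d$.

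Finally, composing $\iota_1$ and $\iota_2$ gives an involution which, by direct computation of the combined signs, acts on the ambient space $\Q[y_1,\ldots,y_d]_{N-d}$ as multiplication by $(-1)^{N+d}$, while fixing $f_\sigma$ because $\sigma\in\ls$. Comparing the two actions yields $f_\sigma = (-1)^{N+d}f_\sigma$, which forces $f_\sigma\equiv 0$ whenever $N$ and $d$ have opposite parity, and hence $\sigma = 0$. The principal obstacle is the careful sign bookkeeping in deriving $\iota_2$ from the linearised stuffle relation, and verifying that $\iota_2$ genuinely preserves membership in $\ls$ rather than only the ambient polynomial space; this unravelling is technical but standard and worked out in Brown \cite{depthgraded}, after which the parity conclusion reduces to the single scalar identity above.
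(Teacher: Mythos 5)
You should first be aware that the paper does not prove this proposition: it appears inside a remark, quoted from Brown \cite{depthgraded}, purely to motivate the block-degree parity statement (Lemma \ref{blockparity}) that is the paper's actual concern. So the only meaningful comparison is with the proof in the cited reference, and your sketch is indeed the standard Ihara--Kaneko--Zagier/Brown generating-function strategy used there: pass to the polynomial representation, extract two involutive symmetries from the linearised shuffle and stuffle relations, and play them off against the homogeneity of $f_\sigma$ (degree $N-d$, whence $f_\sigma(-y_1,\ldots,-y_d)=(-1)^{N+d}f_\sigma(y_1,\ldots,y_d)$) to force $f_\sigma=0$ when $N\not\equiv d \pmod 2$. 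The overall logical shape and the final scalar $(-1)^{N+d}$ are right.

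However, as a proof your proposal has a genuine gap, and it is exactly the step you flag as ``technical but standard'': the derivation of $\iota_1$ and $\iota_2$ is asserted rather than carried out, and this derivation is the entire content of the theorem. Two specific points. First, the linearised shuffle relation alone does not give a clean reversal antisymmetry of $f_\sigma(y_1,\ldots,y_d)$ in those $d$ variables: reversing the word $e_0^{n_1-1}e_1\cdots e_0^{n_d-1}e_1$ does not return a word of the same shape, and the usable dihedral symmetries in Brown's argument live on the translation-extended polynomial in the $d+1$ projective variables $(x_0:x_1:\cdots:x_d)$, where the reflection and the cyclic rotation require the shuffle and stuffle families \emph{jointly}, in two different coordinate systems related by $y_i = x_i - x_{i-1}$. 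Second, because the sign you need at the end is a single $\pm 1$, any slip in the twist on $\iota_1$ or $\iota_2$ silently converts the theorem into the vacuous statement $f_\sigma = f_\sigma$; so ``the sign bookkeeping is standard'' cannot be left as a remark --- it is the proof. As written, your argument ultimately defers to \cite{depthgraded} for precisely the step that needs proving, which makes it a citation with commentary rather than an independent proof; that said, this is no less than the paper itself does for this proposition.
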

With Proposition \ref{canonicalgen}, we obtain a similar corollary to the final conclusion of the following corollary.
\begin{cor}
For a solution to the double shuffle equations mod products $\phi\in\dmr$, of weight  $N$, the depth $d+1\not\equiv N\mod2$ components are uniquely determined by the lower depths. In particular, $\sigma_{2n+1}$ is uniquely determined in depths 1 and 2.
\end{cor}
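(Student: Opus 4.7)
The plan is to reduce the uniqueness statement to the depth parity theorem for $\ls$ by taking differences of candidate solutions. First, I would take two hypothetical solutions $\phi, \phi' \in \dmr$ of weight $N$ that agree in all depth components up to and including depth $d$. Their difference $\psi := \phi - \phi'$ again lies in $\dmr$ with weight $N$, but with lowest non-trivial depth component in some depth $d' \geq d+1$. The central step will be to observe that the leading depth-graded piece $\gr^{d'}_\mathcal{D}\psi$ satisfies the linearised double shuffle equations, and therefore defines an element of $\ls$ in weight $N$ and depth $d'$.

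Granting that, I would apply the depth parity theorem stated just above: if $d' \not\equiv N \pmod{2}$, the leading component is forced to vanish, contradicting the minimality of $d'$. Setting $d' = d+1$ gives precisely the statement that when $d+1 \not\equiv N \pmod{2}$, any two solutions agreeing through depth $d$ must also agree in depth $d+1$. This is exactly the asserted uniqueness of the depth $d+1$ component in terms of the strictly lower depth data.

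For the ``in particular'' claim about $\sigma_{2n+1}$, I would invoke Proposition \ref{canonicalgen}, which pins down the depth 1 component of $\sigma_{2n+1}$ as the canonical depth-1 generator and so fixes it uniquely at depth 1. Since the weight $N = 2n+1$ is odd while $d+1 = 2$ is even, the parity hypothesis $d+1 \not\equiv N \pmod{2}$ is satisfied for $d=1$, so the general part of the corollary forces the depth 2 component of $\sigma_{2n+1}$ to be uniquely determined by its depth 1 component.

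The main obstacle, and the point requiring the most care, is the compatibility of the depth filtration on $\dmr$ with the double shuffle structure: namely, that the leading depth-graded component of an element of $\dmr$ indeed lies inside $\ls$. This reduces to linearising the regularised stuffle and shuffle equations in depth and verifying that their lowest-order parts coincide with the defining equations of $\ls$. This is essentially standard from Racinet's theory of the depth filtration on the double shuffle Lie algebra, but it is the substantive ingredient bridging the filtered world of $\dmr$ with the graded world of $\ls$ where the parity theorem directly applies.
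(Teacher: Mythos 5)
Your argument is correct, but note that the paper itself offers no proof of this corollary: it appears inside a remark and is simply recalled from \cite{depthgraded}, so there is nothing to compare against line by line. Your route --- subtract two solutions agreeing through depth $d$, observe that the lowest-depth component of the difference $\psi\in\dmr$ satisfies the linearised double shuffle equations and hence lies in $\ls$, and then invoke the parity theorem to force that lowest depth $d'$ to satisfy $d'\equiv N\ (\mathrm{mod}\ 2)$ --- is the standard argument and does establish the claim. You correctly identify the one substantive ingredient, namely that $\gr_\mathcal{D}\dmr$ maps into $\ls$ (the leading depth parts of the regularised shuffle and stuffle relations linearise correctly), which is established in \cite{depthgraded}. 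One small correction: for the ``in particular'' clause you cite Proposition \ref{canonicalgen}, but that proposition concerns canonicity in \emph{block degree} $1$, not depth $1$. The canonicity of the depth $1$ part of $\sigma_{2n+1}$ is the analogous but distinct fact (recalled in the introduction from \cite{depthgraded}) that the ambiguity $\sigma_{2n+1}-\sigma_{2n+1}'\in\{\gmot,\gmot\}$ lies in $\mathcal{D}^2$, since the Ihara bracket of two elements of depth at least $1$ has depth at least $2$; with that substitution your deduction of uniqueness in depths $1$ and $2$ (using $N=2n+1$ odd and $d+1=2$ even) goes through.
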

Specifically, $\sigma_{2n+1}$ is uniquely determined in block degree 1 and 2.
\end{remark}

\section{Block-graded multiple zeta values and an encoding of relations}
As the block filtration is motivic and invariant under the duality arising from the symmetry $z \mapsto 1-z$ of $\P^1\setminus\{0,1,\infty\}$, we can consider the associated graded algebra $\gr^\mathcal{B}\mathcal{A}:=\bigoplus_{n=0}^\infty \mathcal{B}_n\mathcal{A}/\mathcal{B}_{n-1}\mathcal{A}$. We follow the example of Brown's depth-graded multiple zeta values \cite{depthgraded}.

\begin{defn}
Define $\mathcal{B}^n\qpoly:=\langle w\mid \degb(e_0we_1)\geq n\rangle_\Q$ and define $$\gr_\mathcal{B}\gmot := \bigoplus_{n=0}^\infty \mathcal{B}^n\gmot/\mathcal{B}^{n+1}\gmot$$ where we identify $\mathcal{B}^n\gmot/\mathcal{B}^{n+1}\gmot$ with its image in $\mathcal{B}^n\Q\langle e_0,e_1\rangle/\mathcal{B}^{n+1}\Q\langle e_0,e_1\rangle$, equipped with the block-graded Ihara bracket.
\end{defn}

\begin{defn}
If  $\degb(e_0we_1)=n$, define $\ib(0;w;1)$ to be the image of $\ia(0;w;1)$ in $\mathcal{B}_n\mathcal{A}/\mathcal{B}_{n-1}\mathcal{A}$. Similarly, define $\ibl(0;w;1)$ to be the image of $\il(0;w;1)$ in $\mathcal{B}_n\mathcal{L}/\mathcal{B}_{n-1}\mathcal{L}$. Define $\zb$ and $\zbl$ similarly.
\end{defn}

\begin{defn}
Fix an embedding of $\{\sigma_3,\sigma_5,\ldots\}\hookrightarrow \qpoly$. We define the block-graded generators $\{p_{2k+1}\}_{k\geq 1}$ to be the image of the generators $\{\sigma_{2k+1}\}_{k\geq 1}$ of $\gmot$ in $\mathcal{B}^1\qpoly/\mathcal{B}^2\qpoly$. We define the bigraded Lie algebra $\bg$ to be the Lie algebra generated by $p_{2k+1}$ and the Ihara bracket.
\end{defn}

One of the challenges in studying $\gmot$ is that we have an ambiguity in our representation of the generators: $\sigma_{2k+1}$ is unique only up to addition of another element of weight $2k+1$. Its depth one part is canonical, so Brown's depth-graded Lie algebra avoids this issue. We find similar success here.

\begin{prop}\label{canonicalgen}
The generators $p_{2k+1}$ of $\bg$ are canonical, i.e. independent of our choice of embedding of generators $\{\sigma_{2k+1}\}\hookrightarrow \qpoly$.
\end{prop}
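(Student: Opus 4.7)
The plan is to show that for any two lifts $\sigma_{2k+1}, \sigma'_{2k+1} \in \qpoly$ of the same abstract weight $2k+1$ generator of $\gmot$, their difference lies in $\mathcal{B}^2\qpoly$, so that the common image $p_{2k+1}$ in $\mathcal{B}^1\qpoly/\mathcal{B}^2\qpoly$ is independent of the chosen embedding. Since $\gmot$ is the free Lie algebra on the $\sigma_{2j+1}$, up to a natural scalar normalization any two lifts of the same abstract generator agree modulo $[\gmot,\gmot]$, and every element of $[\gmot,\gmot]_{2k+1}$ can be written as a finite sum of iterated Ihara brackets of the generators $\sigma_3,\ldots,\sigma_{2k-1}$, each summand involving at least one bracket.

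Next I would verify that each $\sigma_{2j+1}$ already lies in $\mathcal{B}^1\qpoly$. The block-degree-zero subspace $\mathcal{B}_0\qpoly$ is spanned by those words $w$ for which $e_0we_1$ is alternating; such words must follow the pattern $e_0e_1\ldots e_0e_1$ and therefore have even total length. Since $\sigma_{2j+1}$ is homogeneous of odd weight $2j+1$, it can have no non-trivial block-degree-zero component, and hence lies in $\mathcal{B}^1\qpoly$.

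Combining these observations with Corollary \ref{iharagraded} — that the Ihara bracket is graded for block degree — any iterated Ihara bracket of elements of $\mathcal{B}^1\qpoly$ containing at least one bracket lies in $\mathcal{B}^2\qpoly$. In particular $\sigma'_{2k+1} - \sigma_{2k+1} \in \mathcal{B}^2\qpoly$, which establishes canonicity. I expect the main delicate point to be the first step: making precise that the ambiguity in the choice of embedding really lies in $[\gmot,\gmot]_{2k+1}$ once one fixes a normalization so that $\sigma_{2k+1}$ maps to the same class in the abelianization $\gmot/[\gmot,\gmot] \cong \bigoplus_k \Q \sigma_{2k+1}$. This is standard given the weight grading on $\gmot$, but deserves to be stated explicitly before invoking the graded Ihara bracket.
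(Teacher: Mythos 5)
Your proposal is correct and follows essentially the same route as the paper: the difference of two lifts lies in $\{\gmot,\gmot\}$, and the block-gradedness of the Ihara action (Corollary \ref{iharagraded}) places $\{\gmot,\gmot\}$ inside $\mathcal{B}^2\qpoly$, so the images in $\mathcal{B}^1\qpoly/\mathcal{B}^2\qpoly$ coincide. The only difference is that you explicitly verify the generators lie in $\mathcal{B}^1\qpoly$ via the odd-weight parity argument; the paper leaves this step implicit, so your version is slightly more complete.
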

\begin{proof}
Let $\sigma_{2k+1}$, $\sigma_{2k+1}^\prime\in \qpoly$ be two choices of generator for $\gmot$ in weight $2k+1$. We must have
$$\sigma_{2k+1}-\sigma_{2k+1}^\prime\in \{\gmot,\gmot\}.$$
Proposition \ref{iharagraded} tells us that the Ihara action is compatible with the block filtration, and so
$$ \{\gmot,\gmot\}\subset \mathcal{B}^2\gmot$$
and therefore
$$p_{2k+1}-p_{2k+1}^\prime= 0.$$
\end{proof}

Note that we can still define a concept of depth on $\bg$ as before. We define the depth of a word $w$ to be $d(w)$, and induce a decreasing filtration on $\bg$ via its embedding $\bg\hookrightarrow\qpoly$. It is interesting here that depth grading gives canonical generators in depth 1, while block grading gives $p_{2k+1}$ consisting only of terms of depth $k$ or $k+1$.

\begin{lem}\label{generatordepth}
The block-graded generators $p_{2k+1}$ contains only depth $k$ and $k+1$ terms.
\end{lem}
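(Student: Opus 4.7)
The proof should be a direct combinatorial unpacking of the definition, so the plan is to reduce Lemma \ref{generatordepth} to a statement about which two-letter words $w$ of weight $2k+1$ satisfy $\degb(e_0we_1)=1$, and then show that each such $w$ has exactly $k$ or $k+1$ occurrences of $e_1$.

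First, I would note that $p_{2k+1}$ is an element of $\mathcal{B}^1\qpoly/\mathcal{B}^2\qpoly$ in weight $2k+1$, and is therefore represented by a $\Q$-linear combination of words $w$ with $\degb(e_0we_1)=1$ (words with strictly higher block degree vanish in the quotient, and lower-block-degree contributions were already killed when we chose a representative for $\sigma_{2k+1}$ and passed to the block graded piece). So it suffices to prove the purely combinatorial statement: if $|w|=2k+1$ and $\degb(e_0we_1)=1$, then $d(w)\in\{k,k+1\}$.

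Next, I would unfold the definition of block decomposition. The word $e_0we_1$ has length $2k+3$ and by assumption decomposes as a concatenation of exactly two alternating blocks $b_1b_2$, where the last letter of $b_1$ equals the first letter of $b_2$. Since $e_0we_1$ begins with $e_0$ and ends with $e_1$, there are only two possibilities: either the shared letter is $e_0$, in which case $b_1=(e_0e_1)^a e_0$ and $b_2=(e_0e_1)^b$, or the shared letter is $e_1$, in which case $b_1=(e_0e_1)^a$ and $b_2=e_1(e_0e_1)^b$. Matching total lengths gives $a+b=k+1$ in both cases.

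Finally, I would count the occurrences of $e_1$ in each case. In the first case, $e_0we_1$ contains $a+b=k+1$ copies of $e_1$, so $w$ contains $k$ copies, giving $d(w)=k$. In the second case, $e_0we_1$ contains $a+(b+1)=k+2$ copies of $e_1$, so $d(w)=k+1$. Since every word appearing in a representative of $p_{2k+1}$ falls into one of these two cases, the lemma follows. There is no real obstacle here beyond being careful with the case split; the heavy lifting is already done by Proposition \ref{canonicalgen}, which ensures that $p_{2k+1}$ is well-defined on block degree 1.
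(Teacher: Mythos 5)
Your proof is correct and follows essentially the same route as the paper's: both reduce the lemma to the combinatorial observation that a weight $2k+1$ word $w$ with $\degb(e_0we_1)=1$ splits $e_0we_1$ into exactly two alternating blocks, and then count occurrences of $e_1$ according to whether the repeated letter is $e_0$ or $e_1$. Your version just makes the two-block factorisation and the preliminary reduction to representatives slightly more explicit.
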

\begin{proof}
Suppose $w$ is a word of block degree 1 and weight $2k+1$. Then $e_0we_1$ has two blocks and hence contains exactly one of $e_0^2$ or $e_1^2$. In the first case, the number of $e_1$ must be exactly half of $2k+1 - 1$, i.e. $k$. In the second case, the number of $e_0$ must similarly be $k$ and hence the number of $e_1$ is $k+1$.
\end{proof}

\begin{thm}\label{gradedisbg}
The Lie algebra $\bg$ is freely generated by $\{p_{2k+1}\}_{k\geq 1}$.
\end{thm}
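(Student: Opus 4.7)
The plan is a bigraded dimension count. First, I identify $\bg$ with the full block-graded Lie algebra $\gr_\mathcal{B}\gmot$: Corollary \ref{iharagraded} says the Ihara bracket respects the block filtration, so $\gr_\mathcal{B}\gmot$ is a bigraded Lie subalgebra of $\gr^\mathcal{B}\qpoly$ under the block-graded bracket. Since $\gmot$ is Ihara-generated by the $\sigma_{2k+1}$, passage to the associated graded shows that $\gr_\mathcal{B}\gmot$ is generated by the classes $p_{2k+1}$ under the block-graded Ihara bracket, giving $\bg = \gr_\mathcal{B}\gmot$. Let $\mathfrak{f} = \text{Lie}[x_3, x_5, \ldots]$ be the abstract free Lie algebra on bigraded generators $x_{2k+1}$ of weight $2k+1$ and block degree $1$, and let $\phi:\mathfrak{f}\twoheadrightarrow\bg$ denote the canonical bigraded surjection $x_{2k+1}\mapsto p_{2k+1}$. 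Freeness reduces to showing that $\phi$ is an isomorphism, or equivalently that the bigraded Hilbert series of $\mathfrak{f}$ and $\bg$ agree.

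To compute the Hilbert series of $\bg$, I exploit Brown's decomposition $\mathcal{H}\cong\mathcal{A}\otimes\Q[\zm(2)]$ as bigraded vector spaces (noting that $\zm(2)$ has weight $2$ and block degree $0$). Combined with the formula for $\gr^\mathcal{B}\mathcal{H}$ given in the corollary preceding Definition \ref{trueblockdef}, this yields
$$\sum_{m,n\geq 0}\dim\gr^\mathcal{B}_m\mathcal{A}_n\,s^m t^n=\frac{1-t^2}{1-t^2-st^3}.$$
Since the block filtration is motivic (Proposition \ref{blockfullmotivic}), the Hopf pairing between $\mathcal{A}$ and $U(\gmot)$ descends to the associated graded, identifying $\gr^\mathcal{B}\mathcal{A}$ with the graded dual of $U(\gr_\mathcal{B}\gmot)$; hence $U(\gr_\mathcal{B}\gmot)$ has the same bigraded Hilbert series. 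On the other hand, $U(\mathfrak{f})$ is the free associative algebra on the $x_{2k+1}$, whose Hilbert series is
$$\frac{1}{1-\sum_{k\geq 1}st^{2k+1}}=\frac{1-t^2}{1-t^2-st^3}.$$
A bigraded Poincar\'e--Birkhoff--Witt comparison then forces $\mathfrak{f}$ and $\gr_\mathcal{B}\gmot$ to have identical bigraded dimensions, whence the surjection $\phi$ is an isomorphism.

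The main obstacle is the duality step: one must verify that taking $\gr^\mathcal{B}$ is compatible with the Hopf pairing between $\mathcal{A}$ and $U(\gmot)$, and that PBW remains valid on the associated graded in the bigraded setting. Both are facilitated by Proposition \ref{blockfullmotivic} (which ensures the coproduct respects the block filtration) and by working weight-by-weight, where every graded piece is finite-dimensional so there are no pro-nilpotent subtleties. Once this identification of Hilbert series is in place, the remainder of the argument is pure bookkeeping with the series $\tfrac{1-t^2}{1-t^2-st^3}$.
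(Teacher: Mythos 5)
Your argument is correct in substance and, like the paper's, comes down to a dimension count against the free Lie algebra; but you run the count through a different object. The paper works with the Lie coalgebra $\mathcal{L}$ and the explicit Hoffman--Lyndon basis: by Corollary \ref{blocklevel} the block degree of a Hoffman word equals its level, so $\gr^\mathcal{B}_n\mathcal{L}_N$ is spanned by the Hoffman--Lyndon elements with exactly $n$ threes, and a relation in $\bg$ would force a linear dependence among Hoffman--Lyndon elements of $\mathcal{L}_N$, contradicting Brown's basis theorem. You instead work one level up, with the Hopf algebra $\mathcal{A}\cong U(\gmot)^\vee$, and replace the explicit basis by its generating function $\tfrac{1-t^2}{1-t^2-st^3}$, which you match against $U(\mathfrak{f})$ via PBW. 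The essential inputs are the same (Brown's Hoffman basis theorem together with the compatibility of the coaction/Ihara action with the block filtration, Proposition \ref{blockfullmotivic} and Corollary \ref{iharagraded}); your version trades the combinatorics of Lyndon words for the verifications, which you rightly flag, that $\gr$ commutes with $U$ and with graded duality. Both are standard weight-by-weight, so this is a legitimate alternative route.

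One step deserves more care. Your opening claim that ``passage to the associated graded shows $\gr_\mathcal{B}\gmot$ is generated by the classes $p_{2k+1}$,'' i.e.\ $\bg=\gr_\mathcal{B}\gmot$, is not automatic: a linear combination of $k$-fold brackets of the $\sigma_{2j+1}$ with $k<n$ could a priori lie in $\mathcal{B}^n\gmot$ and contribute a class of $\gr^n_\mathcal{B}\gmot$ that is not a bracket of the $p_{2j+1}$; ruling this out is essentially the content of the theorem, so as stated the step is circular. The circularity is harmless because your Hilbert series identity closes it after the fact: writing $V_k$ for the span of $k$-fold brackets of the $\sigma$'s, freeness of $\gmot$ gives $\dim (V_k)_N=\dim\mathfrak{f}_{k,N}$ and $V_k\subset\mathcal{B}^k\gmot$, hence $\dim\mathcal{B}^m\gmot_N\geq\sum_{k\geq m}\dim\mathfrak{f}_{k,N}$; your computation shows this is an equality for all $m$, so $\mathcal{B}^m\gmot_N=\bigoplus_{k\geq m}(V_k)_N$, so $V_m\cap\mathcal{B}^{m+1}\gmot=0$, and injectivity of $\phi$ (and then $\bg=\gr_\mathcal{B}\gmot$) follows. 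You should spell out this untangling rather than asserting the identification at the outset.
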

\begin{proof}
We have a bijection between the generators of $\gmot$ and of $\bg$, and Corollary \ref{iharagraded} tells us that the Ihara action is graded for block degree. Thus, we can write an element $$\{p_{2k_1+1},\{\ldots,\{p_{2k_{b-1}+1},p_{2k_b+1}\}\ldots\}\}$$ as the image of $$\{\sigma_{2k_1+1},\{\ldots,\{\sigma_{2k_{b-1}+1},\sigma_{2k_b+1}\}\ldots\}\}$$ in $\mathcal{B}^b\gmot/\mathcal{B}^{b+1}\gmot$. Hence, we have a relation in $\bg$ if and only if the corresponding sum of terms is 0 in $\gr_{\mathcal{B}}\gmot$. Indeed, we have an injective Lie algebra homomorphism $\bg\hookrightarrow\gr_\mathcal{B}\gmot$ induced by the bijection $\{\sigma_{2k+1}\}_{k\geq 1}\leftrightarrow\{p_{2k+1}\}_{k\geq 1}$. 
Now, as $\gr_{\mathcal{B}}\gmot$ is dual to $\gr^{\mathcal{B}}\mathcal{L}$, the existence of relations in $\bg$ implies the existence of additional relations in $\gr^{\mathcal{B}}\mathcal{L}$. To be precise, we must have that $$\dim\ \gr^\mathcal{B}_n\mathcal{L}_N<\dim\ \langle \il(w) \mid \degb(w)=n,\ |w|=N\rangle_\Q.$$
Then, by the proof of Theorem 7.4 in \cite{brownmixedtate}, we know that the right hand side has is spanned by $\{\za(k_1,\ldots,k_r)\}$, where $k_i\in\{3,2\}$,  and $k_i=3$ exactly $n$ times and $k_1+\cdots+k_r=N$. In particular, it has a basis given by $\za(k_1,\ldots,k_r)$ such that $(k_1,\ldots,k_r)$ is a Lyndon word with respect to the order $3<2$. This basis, called the Hoffman-Lyndon basis, forms a spanning set for $\gr^\mathcal{B}_n\mathcal{L}_N$. Thus,
$$\dim\ \gr^\mathcal{B}_n\mathcal{L}_N<\dim\ \langle \ia(w) \mid \degb(w)=n,\ |w|=N\rangle_\Q$$
which implies that there is a sum of Hoffman-Lyndon elements of weight $N$ with $n$ threes that can be written as a sum of Hoffman-Lyndon elements of weight $N$ with fewer threes. However, the Hoffman-Lyndon elements of weight $N$ form a basis of $\mathcal{L}_N$, and, so, no such relation can exist. Thus, we must have that $\mathcal{L}\equiv\gr^\mathcal{B}\mathcal{L}$ as they have equal dimensions, and hence, $\gr_\mathcal{B}\gmot\equiv \gmot$. This implies $\gr_\mathcal{B}\gmot$ and $\bg$ are both freely generated and isomorphic.
\end{proof}

\begin{remark}
While both Brown's $\mathfrak{dg}$ and our $\bg$ have canonical generators, Theorem \ref{gradedisbg} tells us that $\bg$ is free, while there exist relations in $\dg$, and hence `exceptional' generators are needed, first appearing in depth four. These relations are shown to have a somewhat mysterious connection to modular forms by Pollack \cite{pollack}, and this has been further explored by Baumard and Schneps \cite{schneppollack}. However, it is a computationally challenging task, and suggests that `depth-graded' multiple zeta values may not be the most natural choice of object to study.
\end{remark}

\section{Polynomial representations}
We now reframe this Lie algebra in terms of commutative polynomials, similarly to Brown \cite{anatomy}\cite{zeta3} and \'Ecalle \cite{ecalleassoc}, as follows.

Recall that Charlton shows that every word $w\in\{ e_0,e_1\}^\times$ can be written uniquely as a sequence of alternating blocks \cite{charthesis}. In doing so, he establishes a bijection
\begin{equation*}
\begin{split}
\text{bl}:\{e_0,e_1\}^\times\setminus\{\emptyset\} &\to \bigcup_{n=1}^\infty \{0,1\}\times \mathbb{N}^n\\
w&\mapsto (\epsilon;l_1,l_2,\ldots,l_n)
\end{split}
\end{equation*}
where $\epsilon$ defines the first letter of $w$, and $l_1,\ldots,l_n$ describe the length of the alternating blocks.

\begin{example}
These words correspond to the following sequences.
\begin{equation*}
\begin{split}
e_0e_1e_0e_0e_1e_0e_1e_1&\mapsto (0;3,4,1),\\
e_1e_1e_0e_1e_0e_1e_1e_0e_0&\mapsto (1;1,5,2,1)
\end{split}
\end{equation*}
\end{example}

We can use this to define an injection of  vector spaces by
\begin{equation}\label{isomorph}
\begin{split}
\pi_\text{bl}:\qpoly\setminus\{\Q\cdot 1\}&\to \bigoplus_{n=1}^\infty x_1\Q[x_1,\ldots,x_n]x_n\\
w&\mapsto x_1^{l_1}\ldots x_n^{l_n}
\end{split}
\end{equation}
where $\text{bl}(e_0we_1)=(0;l_1,\ldots,l_n)$.

\begin{remark} 
In a slight abuse of notation, it will be convenient for us to identify words $w$ with the sequence $(l_1,\ldots,l_n)$ given by $\text{bl}(e_0we_1)$. In particular, for any $\bullet\in\{\mathfrak{m},\mathfrak{a},\mathfrak{l},\mathfrak{b},\mathfrak{bl}\}$, we define
$$\text{I}^\bullet(l_1,\ldots,l_n):=\text{I}^\bullet(0;w;1)$$
where $w$ is the unique word such that $\text{bl}(e_0 we_1)=(0;l_1,\ldots,l_n)$. If no such word exists, we define $\text{I}^\bullet(l_1,\ldots,l_n):=0$.
\end{remark}

In this formulation, a word of block degree $n$ and weight $N\geq 1$ is  represented by a polynomial in $n+1$ variables of degree $N+2$. From this point on, we shall freely identify elements of $\bg$ with their images under this injection.

\begin{prop}\label{generatordef}
The projections of the depth-signed $\sigma_{2k+1}\in\gmot$ onto their block degree one part are given by
$$p_{2k+1}(x_1,x_2) = q_{2k+1}(x_1,x_2) - q_{2k+1}(x_2,x_1)$$
where
\begin{equation*}
q_{2k+1}(x_1,x_2)=\sum_{i=1}^{k} \left[\binom{2k}{2i} - \left(1-\frac{1}{2^{2k}}\right)\binom{2k}{2k+1-2i}\right]x_1^{2i+1}x_2^{2k+2-2i} -  x_1x_2^{2k+2}
\end{equation*}
and $\sigma_{2k+1}$ have been normalised to correspond to $\frac{(-1)^k}{2}\zeta(2k+1)$.
\end{prop}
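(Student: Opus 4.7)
The plan is to determine the coefficients of $p_{2k+1}(x_1,x_2)$ monomial-by-monomial, combining the motivic version of Zagier's theorem for $\zeta(\{2\}^a, 3, \{2\}^b)$ with the block-graded duality from Theorem \ref{gradedisbg}.

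First I would enumerate the block-degree-one words of weight $2k+1$. Under the encoding $\pi_{\text{bl}}$ of (\ref{isomorph}), these correspond to the $2k+2$ monomials $x_1^a x_2^b$ with $a+b = 2k+3$ and $a,b \geq 1$. By Corollary \ref{blocklevel}, the Hoffman words $\zm(\{2\}^a, 3, \{2\}^b)$ (with $a+b+1 = k$) correspond precisely to the monomials $x_1^{2a+3}x_2^{2b+2}$, that is, those with odd $x_1$-exponent at least $3$ and even $x_2$-exponent at least $2$. The $e_0 \leftrightarrow e_1$ duality, which preserves block degree by the remark after Definition \ref{trueblockdef}, swaps these with the monomials having even $x_1$-exponent and odd $x_2$-exponent at least $3$. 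The remaining two boundary monomials $x_1 x_2^{2k+2}$ and $x_1^{2k+2} x_2$ correspond to the non-Hoffman words $(e_0 e_1)^k e_0$ and its $e_0\leftrightarrow e_1$ image.

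Next, I would invoke the motivic Zagier formula: modulo products,
$$\il(\{2\}^a, 3, \{2\}^b) = 2(-1)^k \left[\binom{2k}{2a+2} - (1-2^{-2k})\binom{2k}{2b+1}\right] \il(2k+1).$$
The block-graded duality of Theorem \ref{gradedisbg} pairs elements of $\gr^\mathcal{B}_1\mathcal{L}_{2k+1}$ against the canonical generator $p_{2k+1}$ of the weight $2k+1$ part of $\gr_\mathcal{B}^1 \gmot$. Combining this with the normalisation $\sigma_{2k+1} \leftrightarrow \frac{(-1)^k}{2}\zeta(2k+1)$, which cancels the prefactor $2(-1)^k$, forces the coefficient of $x_1^{2a+3}x_2^{2b+2}$ in $p_{2k+1}$ to equal $\binom{2k}{2a+2} - (1-2^{-2k})\binom{2k}{2b+1}$, reproducing the summation in $q_{2k+1}(x_1,x_2)$. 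The parity-swapped Hoffman contributions are then obtained from the $e_0 \leftrightarrow e_1$ duality, which on block-degree-one polynomials acts as $x_1 \leftrightarrow x_2$ in a depth-signed way, producing the $-q_{2k+1}(x_2, x_1)$ summand and giving the antisymmetric shape $p_{2k+1}(x_1,x_2) = -p_{2k+1}(x_2,x_1)$.

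Finally, the boundary monomial $-x_1 x_2^{2k+2}$ and its swap must be extracted separately, since $(e_0e_1)^k e_0$ is not in the Hoffman basis. I would expand $\im(0;(e_0e_1)^k e_0;1)$ in the Hoffman basis by iteratively applying motivic shuffle relations to push the trailing $e_0$ past the preceding $e_1$'s, and then read off the block-degree-one contribution after matching against Zagier's formula. I expect this last step to be the main obstacle, as it requires careful tracking of signs and of the regularisation for the non-convergent boundary word; however, the coefficient $-1$ is uniquely pinned down by the constraint that $p_{2k+1}$ arises as the block-degree-one projection of a Lie element of $\gmot$, together with the normalisation fixed above.
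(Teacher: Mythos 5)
Your proposal follows essentially the same route as the paper: extract the coefficients of the interior block-degree-one words from the motivic version of Zagier's evaluation of $\zeta(\{2\}^a,3,\{2\}^b)$ via the duality pairing with $\zm(2k+1)$, obtain the other half by the $e_0\leftrightarrow e_1$ antisymmetry, and handle the single non-Hoffman boundary word by regularisation. The only substantive difference is at the step you flag as the main obstacle. You do not need to expand $\im(0;(e_0e_1)^ke_0;1)$ in the Hoffman basis: writing the $e_0^2$-containing part as $q_{2k+1}=\sum_{i=0}^k c_i\,(e_1e_0)^ie_0(e_1e_0)^{k-i}$, the paper uses the single shuffle-regularisation identity $\langle\sigma_{2k+1},\,e_0\sh(e_1e_0)^k\rangle=0$ --- which is exactly your ``block-degree-one projection of a Lie element'' constraint made concrete, since $e_0\sh(e_1e_0)^k$ produces each word $(e_1e_0)^ie_0(e_1e_0)^{k-i}$ with multiplicity $2$ for $i\geq 1$ and multiplicity $1$ for $i=0$ --- to get $c_0+2\sum_{i=1}^k c_i=0$. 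Thus $c_0$ is determined linearly by the Zagier coefficients and the chosen normalisation, with no tracking of regularised Hoffman expansions required. With that one identity supplied your argument closes; as written, the boundary coefficient $-1$ is asserted rather than derived.
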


\begin{proof}
We will compute the block degree 1 part of $\sigma_{2k+1}$ consisting of terms containing an $e_0^2$. This will give $q_{2k+1}$. That $p_{2k+1}(x_1,x_2)=q_{2k+1}(x_1,x_2)-q_{2k+1}(x_2,x_1)$ follows from duality. In terms of $e_0,\ e_1$, we have
$$q_{2k+1}= \sum_{i=0}^k c_i(e_1e_0)^ie_0(e_1e_0)^{k-i},$$
where $\zm(\{2\}^{i-1},3,\{2\}^{k-i}) = \alpha c_i\zm(2k+1) \text{ (mod }\zm(2)\text{)}$, for $i>0$ and some $\alpha\in\Q$, and $c_0$ is obtained via shuffle regularisation \cite{brownmixedtate}.

Shuffle regularisation of $e_0e_1\cdots e_0$ tells us that
$$c_0 + 2\sum_{i=1}^k c_i = 0.$$
Next, from the work of Zagier \cite{zaghoff},
$$\zeta(\{2\}^{a},3,\{2\}^b) = 2\sum_{r=1}^{a+b+1}(-1)^r\left[\binom{2r}{2a+2}-(1-\frac{1}{2^{2r}})\binom{2r}{2b+1}\right]\zeta(\{2\}^{a+b-r+1})\zeta(2r+1),$$
where $\{c\}^a$ denotes the string $c,c,\ldots,c$ with $c$ repeated $a$ times.
Brown shows in \cite{brownmixedtate} Theorem 4.3 that this lifts to an identity among motivic multiple zeta values. Considered modulo $\zm(2)$, we find
$$\zm(\{2\}^{i-1},3,\{2\}^{k-i})  = 2(-1)^{k}\left[\binom{2k}{2i} - \left(1-\frac{1}{2^{2k}}\right)\binom{2k}{2k+1-2i}\right]\zm(2k+1),$$
and thus, we can take $c_i=\left[\binom{2k}{2i} - (1-\frac{1}{2^{2k}})\binom{2k}{2k+1-2i}\right]$ for $i>0$. The result then follows.
\end{proof}

Computing these sums explicitly, we obtain the following theorem.

\begin{thm}\label{summedgen}
The polynomial $p_{2k+1}(x_1,x_2)$ is given explicitly by the formula
$$p_{2k+1}(x_1,x_2)= x_1x_2(x_1-x_2)\left(\frac{(1-2^{2k+1})(x_1+x_2)^{2k}-(x_1-x_2)^{2k}}{2^{2k}}\right).$$
\end{thm}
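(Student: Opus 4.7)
The plan is to evaluate the two binomial sums in Proposition~\ref{generatordef} in closed form by recognising them as partial expansions of $(x_1\pm x_2)^{2k}$, and then to antisymmetrise to obtain $p_{2k+1}(x_1,x_2)=q_{2k+1}(x_1,x_2)-q_{2k+1}(x_2,x_1)$. The argument is a direct computation; no new ideas beyond careful bookkeeping are needed.

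First I would factor out $x_1x_2^{2}$ from every term by writing $x_1^{2i+1}x_2^{2k+2-2i}=x_1x_2^{2}\cdot x_1^{2i}x_2^{2k-2i}$, so that the $\binom{2k}{2i}$ sum becomes $x_1x_2^{2}$ times an even-index binomial sum, evaluated via
\[
\sum_{i=0}^{k}\binom{2k}{2i}x_1^{2i}x_2^{2k-2i}=\tfrac{1}{2}\bigl[(x_1+x_2)^{2k}+(x_1-x_2)^{2k}\bigr],
\]
once the missing $i=0$ boundary term $x_2^{2k}$ has been subtracted. For the $\binom{2k}{2k+1-2i}$ sum, the substitution $j=k-i$ converts it into a sum over odd binomial coefficients, which I evaluate using the analogous identity
\[
\sum_{l=0}^{k-1}\binom{2k}{2l+1}x_1^{2k-2l-1}x_2^{2l+1}=\tfrac{1}{2}\bigl[(x_1+x_2)^{2k}-(x_1-x_2)^{2k}\bigr],
\]
together with the observation that $(x_1+x_2)^{2k}-(x_1-x_2)^{2k}$ is divisible by $x_2$, so the extra factor of $x_1/x_2$ introduced by the reindexing remains polynomial. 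These closed forms express $q_{2k+1}(x_1,x_2)$ as a $\Q[x_1,x_2]$-linear combination of $(x_1+x_2)^{2k}$, $(x_1-x_2)^{2k}$, and the boundary monomial $x_1x_2^{2k+2}$.

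Next I would form $p_{2k+1}=q_{2k+1}(x_1,x_2)-q_{2k+1}(x_2,x_1)$. Because $2k$ is even, both $(x_1+x_2)^{2k}$ and $(x_1-x_2)^{2k}$ are symmetric in $x_1,x_2$, so only the quadratic prefactors $x_1x_2^{2}$, $x_1^{2}x_2$ and the boundary monomial antisymmetrise. The antisymmetrisation $x_1x_2^{2}-x_1^{2}x_2=-x_1x_2(x_1-x_2)$ extracts a common factor $x_1x_2(x_1-x_2)$ from the principal terms, and collecting the coefficients of $(x_1+x_2)^{2k}$ and $(x_1-x_2)^{2k}$ produces the rational constants $(1-2^{2k+1})/2^{2k}$ and $-1/2^{2k}$ after a brief simplification with powers of two.

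The main obstacle is the final bookkeeping step: the antisymmetrised boundary contribution $x_1x_2^{2k+2}-x_1^{2k+2}x_2=-x_1x_2(x_1^{2k+1}-x_2^{2k+1})$ must combine cleanly with the principal terms so that no residual monomial pieces remain outside the $(x_1\pm x_2)^{2k}$ closed form. This is elementary but error-prone, as it involves tracking several signs, the $x_2^{2k}$ correction from the even sum, the $x_1/x_2$ correction from reindexing the odd sum, and the shuffle-regularised $i=0$ coefficient; once these are tabulated correctly, the stated identity follows by direct simplification.
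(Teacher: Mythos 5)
Your strategy is exactly the paper's: the printed proof of Theorem \ref{summedgen} consists of the single sentence ``Computing these sums explicitly,'' so the even/odd binomial-sum identities, the reindexing $j=k-i$, and the antisymmetrisation $p_{2k+1}=q_{2k+1}(x_1,x_2)-q_{2k+1}(x_2,x_1)$ are precisely the intended route, and those parts of your outline are sound.

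However, the step you defer to ``bookkeeping'' is the one place the argument can actually fail, and as you describe it --- boundary pieces ``combining cleanly with the principal terms'' --- it would not go through. After antisymmetrisation the two principal sums contribute multiples of $x_1x_2(x_1-x_2)(x_1+x_2)^{2k}$ and $x_1x_2(x_1-x_2)^{2k+1}$, whereas the boundary terms (the subtracted $i=0$ term $x_1x_2^{2k+2}$ from the even sum together with the explicit $c_0\,x_1x_2^{2k+2}$ term) contribute $(c_0-1)\,x_1x_2(x_2^{2k+1}-x_1^{2k+1})$. For $k\geq 2$ the polynomial $x_1x_2(x_1^{2k+1}-x_2^{2k+1})$ does \emph{not} lie in the span of the other two pieces, so this contribution cannot be absorbed into the closed form: it has to vanish identically, which forces $c_0=1$. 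That value is what the shuffle-regularisation identity $c_0=-2\sum_{i\geq 1}c_i$ in the proof of Proposition \ref{generatordef} actually yields, since $\sum_{i=1}^{k}\binom{2k}{2i}=2^{2k-1}-1$ and $\sum_{i=1}^{k}\binom{2k}{2k+1-2i}=2^{2k-1}$ give $\sum_{i\geq1}c_i=-\tfrac12$; it is \emph{not} the coefficient $-1$ displayed in the proposition's statement, so a literal computation from the printed formula produces an irreducible residual. With $c_0=1$ the surviving coefficients are $-\tfrac12-\tfrac12\bigl(1-2^{-2k}\bigr)=(1-2^{2k+1})/2^{2k+1}$ and $-\tfrac12+\tfrac12\bigl(1-2^{-2k}\bigr)=-1/2^{2k+1}$, so one lands on the stated formula only up to an overall factor of $2$ (harmless, since $p_{2k+1}$ is determined only up to scale by Corollary \ref{newgendef}, but it should be stated). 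In short: right method, but the cancellation you postpone is where the entire verification lives, and it requires the regularised value of $c_0$ rather than a generic combination of leftover monomials.
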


With this in mind, we can provide a characterisation of these generators in terms of polynomial equations.

\begin{cor}\label{newgendef}
The polynomial $p_{2k+1}(x_1,x_2)$ is, up to rescaling, the unique homogeneous polynomial $p(x_1,x_2)$ of degree $2k+3$ such that
$$p(x_1,0)=p(0,x_2)=p(x_1,x_2)+p(x_2,x_1)=0,$$
and, defining $r(x_1,x_2):=\frac{p(x_1,x_2)}{x_1x_2(x_1-x_2)}$, satisfying
$$r(0,x)=2r(x,-x),$$
and
$$\left(\partialx{1}\right)^2 r(x_1,x_2)=\left(\partialx{2}\right)^2r(x_1,x_2).$$
\end{cor}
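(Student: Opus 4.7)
The plan is to first verify that the polynomial $p_{2k+1}$ from Theorem \ref{summedgen} satisfies all the listed conditions, and then to show that these conditions pin down a one-parameter family, so that rescaling determines $p$.

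For the verification, I would write $r_{2k+1}(x_1,x_2) = \bigl[(1-2^{2k+1})(x_1+x_2)^{2k} - (x_1-x_2)^{2k}\bigr]/2^{2k}$. The three vanishing conditions on $p_{2k+1}$ follow immediately from the factor $x_1 x_2 (x_1-x_2)$ together with the symmetry of $r_{2k+1}$ under $x_1\leftrightarrow x_2$. The condition $r(0,x)=2r(x,-x)$ is a direct substitution: $r_{2k+1}(0,x)=-2x^{2k}$ while $r_{2k+1}(x,-x) = -x^{2k}$. The differential relation $\partial_1^2 r = \partial_2^2 r$ is clear because both $(x_1+x_2)^{2k}$ and $(x_1-x_2)^{2k}$ satisfy it (the second partials in each variable agree, the sign coming in squared in the $x_1-x_2$ case).

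For uniqueness, I would start from the three algebraic constraints. Since $p(x_1,0)=p(0,x_2)=0$ we must have $x_1 x_2 \mid p$, and the antisymmetry $p(x_1,x_2)+p(x_2,x_1)=0$ forces $p(x,x)=0$, so $(x_1-x_2)\mid p$. Thus we can factor $p(x_1,x_2) = x_1 x_2 (x_1-x_2) r(x_1,x_2)$ with $r$ homogeneous of degree $2k$; since $x_1 x_2(x_1-x_2)$ is antisymmetric, $r$ is symmetric. Now introduce $u=x_1+x_2$, $v=x_1-x_2$; the symmetry of $r$ means it is a polynomial in $u$ and $v^2$, so
\begin{equation*}
r(x_1,x_2) = \sum_{i=0}^{k} c_i\, u^{2k-2i} v^{2i}.
\end{equation*}
The key computation is $\partial_{x_1}^2 - \partial_{x_2}^2 = 4\,\partial_u \partial_v$, so the differential relation becomes $\partial_u \partial_v r = 0$. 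This forces $c_i = 0$ for $0<i<k$, leaving $r = c_0(x_1+x_2)^{2k} + c_k(x_1-x_2)^{2k}$. Finally, imposing $r(0,x) = 2r(x,-x)$ yields $c_0 + c_k = 2^{2k+1} c_k$, i.e.\ $c_0 = (2^{2k+1}-1) c_k$, reducing the family to a single scaling parameter.

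I expect no serious obstacle: the main ingredient is the change of variables $(x_1,x_2)\mapsto(u,v)$ which turns the second-order PDE into the elementary constraint $\partial_u \partial_v r = 0$. The only subtlety is bookkeeping of homogeneity and the matching of normalizations at the end, both of which are immediate once the form of $r$ is reduced to two parameters.
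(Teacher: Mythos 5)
Your proposal is correct and follows essentially the same route as the paper: factor out $x_1x_2(x_1-x_2)$, pass to $u=x_1+x_2$, $v=x_1-x_2$ so the differential condition becomes $\partial_u\partial_v r=0$, and then pin down the remaining two coefficients with $r(0,x)=2r(x,-x)$. Your additions (the explicit divisibility/symmetry argument and the verification that $p_{2k+1}$ satisfies the conditions) only make explicit what the paper leaves implicit.
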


\begin{proof}
From the conditions $p(x_1,0)=p(0,x_2)=p(x_1,x_2)+p(x_2,x_1)=0$, we can write $p(x_1,x_2)=x_1x_2(x_1-x_2)r(x_1,x_2)$. Letting $u=x_1+x_2$, and $v=x_1-x_2$, we can rewrite 
$$\left(\partialx{1}\right)^2 r(x_1,x_2)=\left(\partialx{2}\right)^2r(x_1,x_2)\Leftrightarrow \frac{\partial^2 r}{\partial u\partial v}(u,v)=0,$$
which has polynomial solution, homogeneous of degree $(2k+3)-3=2k$
$$r(u,v)=\alpha u^{2k} + \beta v^{2k}$$
which is to say
$$r(x_1,x_2)=\alpha(x_1+x_2)^{2k}+\beta(x_1-x_2)^{2k}.$$
Finally, the condition
$$r(0,x)=2r(x,-x)$$
gives
$$(\alpha + \beta)x^{2k} = 2^{2k+1}\beta x^{2k},$$
and hence
$$\alpha = -(1-2^{2k+1})\beta,$$
giving the desired result.
\end{proof}

We can provide an exact polynomial formula for the Ihara action. Recall that we have chosen $\gmot$ to differ from Brown's by sending $e_1\mapsto -e_1$, and so this is only accurate for `depth-signed' elements. We delay the proof of this until later.

\begin{thm}\label{Iharanice}
For (depth-signed) elements of the motivic Lie algebra, the Ihara action is given at the level of block-polynomials by
\begin{equation}\label{prettyIhara}
\begin{split}
(f\circ g) (x_1,\ldots,x_{m+n-1}) ={}& (-1)^{(m+1)(n+1)}\sum_{i=1}^{n}\frac{f (x_i,x_{i+1},\ldots,x_{i+m-1})}{x_i-x_{i+m-1}}\\
&\times \left( \frac{1}{x_i}g (x_1,\ldots,x_{i-1},x_i,x_{i+m},\ldots,x_{m+n-1})\right.\\
&\quad -\left.\frac{1}{x_{i+m-1}}g (x_1,\ldots,x_{i-1},x_{i+m-1},\ldots,x_{m+n-1})\right).
\end{split}
\end{equation}

\end{thm}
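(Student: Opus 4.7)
The plan is to translate the word-level formula for the (depth-signed) linearised Ihara action through the block bijection $\pi_{\text{bl}}$ and to identify the resulting sum with the displayed closed form, paralleling Brown's polynomial presentation of the Ihara action in the depth-graded setting \cite{anatomy} but with the depth monomial replaced by the block monomial.

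First, I would recall the recursive combinatorial description of the linearised Ihara action on $\qpoly$ from \cite{anatomy}: for a Lie polynomial $f$ of weight $M$ and a word $w = a_1 \cdots a_N$, $f \circ w$ is a signed sum of ``insertions'' of $f$ into $w$, indexed by positions $0 \le p \le N - M$, with signs controlled by the letters of $w$ at the insertion boundaries. The depth-signing convention ($e_1 \mapsto -e_1$) yields the global prefactor $(-1)^{(m+1)(n+1)}$, accounting for the parity of $e_1$-counts in $f$ and $g$ and for the index shifts introduced by the insertion.

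Second, I would analyse how each insertion affects block decompositions. Writing $e_0 g e_1 = B_1 \cdots B_n$ and $e_0 f e_1 = C_1 \cdots C_m$ in block form, the insertions can be organised by which block $B_i$ of $g$ they lie inside. The resulting word, when re-bracketed into alternating blocks, has exactly $m + n - 1$ blocks, and its block monomial is a product of $f$'s block polynomial evaluated at $x_i,\ldots,x_{i+m-1}$ with $g$'s block polynomial evaluated at a variable set where the slot for $B_i$ is replaced either by $x_i$ or by $x_{i+m-1}$, according to whether the inserted $f$ merges with $B_i$ from the left or the right. The division by $x_i$ or $x_{i+m-1}$ in the formula reflects the consumption of one letter of $B_i$ at the insertion boundary. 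The two alignments enter with opposite signs, so summing them at each $i$ produces the Leibniz-style difference quotient displayed; this is polynomial because $f \in \bg$ is Lie, hence vanishes when its first and last block variables coincide.

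The main obstacle will be the combinatorial bookkeeping: verifying that the signed sum over all insertion positions and alignments, after applying $\pi_{\text{bl}}$, collates exactly into the displayed closed form, and that the prefactor $(-1)^{(m+1)(n+1)}$ emerges with the correct parity. A useful reduction is to first establish the formula on a pair of canonical generators $p_{2k+1}$ and $p_{2l+1}$ (where $m = n = 2$, so the right-hand side collapses to only a handful of terms), and then bootstrap to arbitrary $f, g \in \bg$ using that the Ihara action is graded for block degree (Corollary \ref{iharagraded}) together with the free generation of $\bg$ by the $p_{2k+1}$ (Theorem \ref{gradedisbg}). The alternation of blocks at insertion boundaries should, once carefully traced, force precisely the claimed global sign.
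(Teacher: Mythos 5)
Your first half is on target and matches the paper's actual mechanism: the paper's proof of the intermediate Theorem \ref{uglyIhara} begins exactly by showing, from the recursion (\ref{ihararecursive}) and the antisymmetry $\sigma+\sigma^*=0$ of Lie elements, that $\sigma$ never inserts between two equal letters, so the insertion sum organises block-by-block; the closed form is then extracted by summing geometric series within each block of $g$, and the depth-signed version (\ref{prettyIhara}) is obtained only afterwards by a separate computation (Lemma \ref{depthsigned}) of what $e_1\mapsto -e_1$ does to a block polynomial.

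However, your proposed reduction at the end has a genuine gap. First, the statement (compare Theorem \ref{uglyIhara}) is for arbitrary $g\in\Q[x_1,\ldots,x_n]$, i.e.\ the image of an arbitrary word, not only for $g\in\bg$; the paper's induction is on the number of blocks of the word represented by $g$, with $f$ of arbitrary block degree throughout. Second, even restricting to $g\in\bg$, a base case with $m=n=2$ is too weak to start any bootstrap: already the first step $p_1\circ(p_2\circ g)$ requires the formula for a generator acting on a three-variable polynomial. Third, and most importantly, free generation of $\bg$ and block-gradedness of the action do not by themselves propagate a closed formula from generators to brackets: to pass from $f=p_1,p_2$ to $f=\{p_1,p_2\}$ via $\{p_1,p_2\}\circ g=p_1\circ(p_2\circ g)-p_2\circ(p_1\circ g)$ you must verify that the right-hand side of (\ref{prettyIhara}) itself satisfies the corresponding pre-Lie compatibility, a nontrivial polynomial identity that is the real content of such an argument and that you have not addressed. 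Finally, the sign analysis is not a detail that simply falls out once traced: the paper needs Lemma \ref{blockparity} to determine the parity of the block at each insertion boundary, and the separate depth-signing factor $(-1)^{\lceil l/2\rceil}f(-x_1,\ldots,(-1)^nx_n)$ of Lemma \ref{depthsigned}, before the prefactor in (\ref{prettyIhara}) can be pinned down.
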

\section{Relations arising in the polynomial representation}
We find several relations arising naturally in the polynomial representation which are preserved by the Ihara action, and dual to relations in $\gr^\mathcal{B}\mathcal{L}$. We prove these by inducting on block degree. To illustrate this method, we will reprove the duality identity.

\begin{prop}\label{duality}
For all $f(x_1,\ldots,x_n)\in\bg$,
$$f(x_1,\ldots,x_n)=(-1)^{n+1}f(x_n,\ldots,x_1).$$ 
\end{prop}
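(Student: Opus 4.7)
I would induct on the block degree, or equivalently on the number of variables $n$. By Theorem \ref{gradedisbg}, $\bg$ is freely generated by $\{p_{2k+1}\}_{k\geq 1}$ under the Ihara bracket, so any $f \in \bg$ of block degree $n-1$ is a linear combination of nested Ihara brackets, and by linearity the identity reduces to checking it on a single bracket $\{g,h\} = g \circ h - h \circ g$, where $g$ has $p$ variables and $h$ has $q = n - p + 1$ variables with $p, q < n$.

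The base case $n = 2$ is immediate from the antisymmetry $p_{2k+1}(x_1,x_2) + p_{2k+1}(x_2,x_1) = 0$ supplied by Corollary \ref{newgendef}. For the inductive step I would apply Theorem \ref{Iharanice} to $(g \circ h)(x_n, x_{n-1}, \ldots, x_1)$, substitute $y_j := x_{n+1-j}$, and reflect the summation index via $i \mapsto q - i + 1$ so that the $g$-argument becomes $(x_j, x_{j+1}, \ldots, x_{j+p-1})$ in a new summation variable $j$. Invoking the inductive hypothesis on both $g$ and $h$ then returns their arguments to increasing order at the cost of signs $(-1)^{p+1}$ and $(-1)^{q+1}$ respectively, while the reversed denominator $x_{j+p-1} - x_j$ contributes one further sign flip and the reversed order of the inner difference contributes another. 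Combining these with the prefactor $(-1)^{(p+1)(q+1)}$ from the Ihara formula, the net sign of $(g \circ h)(x_n, \ldots, x_1)$ relative to $(g \circ h)(x_1, \ldots, x_n)$ works out to $(-1)^{p+q} = (-1)^{n+1}$. The same manipulation applies to $h \circ g$, so the identity for $\{g, h\}$, and hence for every element of $\bg$ of block degree $n-1$, follows.

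The main obstacle is the sign bookkeeping: there are four separate sources of sign (the Ihara prefactor, the two inductive-hypothesis applications, the reversed denominator, and the swapped inner difference) that need to be combined correctly, and one must check that after the index reflection the summands match the original Ihara formula term-by-term. A pleasant byproduct is that duality in fact already holds at the level of the Ihara composition $g \circ h$ rather than only for the bracket, suggesting that it is an intrinsic feature of the polynomial Ihara action; this is the structural reason the inductive step closes cleanly.
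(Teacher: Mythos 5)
Your proposal is correct and follows essentially the same route as the paper: structural induction on bracket expressions, with the base case being the antisymmetry of $p_{2k+1}$ and the inductive step carried out by reversing the summation index in the explicit polynomial Ihara formula and tracking the four sign contributions, which indeed combine to $(-1)^{p+q}=(-1)^{n+1}$. The paper likewise establishes the identity for the composition $f\circ g$ itself rather than only for the bracket, exactly as in your closing observation.
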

\begin{proof}
It suffices to show that this holds for $p_{2k+1}$, and that, if this holds for $f,g\in \bg$, then it holds for $f\circ g$. The former holds by definition of $p_{2k+1}$. To see the latter, note that
\begingroup
\allowdisplaybreaks
\begin{align*}
(f\circ g) &(x_{m+n-1},\ldots,x_{1})\\
 ={} &(-1)^{(m+1)(n+1)}\sum_{i=1}^{n}\frac{f (x_{m+n-i},x_{m+n-i-1},\ldots,x_{n+1-i})}{x_{m+n-i}^2-x_{n+1-i}^2}\\
&\times\left(\left(1 + \frac{x_{n+1-i}}{x_{m+n-i}}\right)g (x_{m+n-1},\ldots,x_{m+n-i+1},x_{m+n-i},x_{n-i},\ldots,x_{1})\right.\\
&\quad -\left. \left(1+\frac{x_{m+n-i}}{x_{n+1-i}}\right)g (x_{m+n-1},\ldots,x_{m+n-i+1},x_{n+1-i},\ldots,x_{1})\right)\\
= {}&(-1)^{(m+1)(n+1)}\sum_{i=1}^{n}(-1)^m\frac{f (x_{n+1-i},x_{n+2-1},\ldots,x_{m+n-i})}{x_{n+1-i}^2-x_{m+n-i}^2}\\
&\times\left((-1)^{n+1}\left(1 + \frac{x_{n+1-i}}{x_{m+n-i}}\right)g (x_{1},\ldots,x_{n-i},x_{m+n-i},x_{m+n-i},\ldots,x_{m+n-1})\right.\\
&\quad -\left. (-1)^{n+1}\left(1+\frac{x_{m+n-i}}{x_{n+1-i}}\right)g (x_{1},\ldots,x_{n+1-i},x_{m+n-i+1},\ldots,x_{m+n-1})\right)\\
={} &(-1)^{m+n} (-1)^{(m+1)(n+1)}\sum_{i=1}^{n}\frac{f (x_{n+1-i},x_{n+2-1},\ldots,x_{m+n-i})}{x_{n+1-i}^2-x_{m+n-i}^2}\\
&\times\left( \left(1+\frac{x_{m+n-i}}{x_{n+1-i}}\right)g (x_{1},\ldots,x_{n+1-i},x_{m+n-i+1},\ldots,x_{m+n-1})\right.\\
&\quad -\left.\left(1 + \frac{x_{n+1-i}}{x_{m+n-i}}\right)g (x_{1},\ldots,x_{n-i},x_{m+n-i},x_{m+n-i},\ldots,x_{m+n-1})\right)\\
={} &(-1)^{m+n}(f\circ g)(x_1,\ldots,x_{m+n-1}),
\end{align*}
\endgroup
and hence, the duality relation is preserved by the Ihara bracket.
\end{proof}

We can similarly prove Charlton's cyclic insertion conjecture, up to terms of lower block degree. We will instead show that a more general relation holds, of which cyclic insertion is a corollary. These are the `block shuffle' relations.

\begin{defn}
For any $1\leq r \leq n$, define the shuffle set

$$\Sh_{n,r}=\{\sigma\in S_n \mid \sigma^{-1}(1)<\cdots<\sigma^{-1}(r);\sigma^{-1}(r+1)<\cdots<\sigma^{-1}(n)\}.$$

Then, for any $f\in\Q[x_1,\ldots,x_n]$, define
$$f(x_1\cdots x_r\sh x_{r+1}\cdots x_n):=\sum_{\sigma\in \Sh_{n,r}}f(x_{\sigma(1)},x_{\sigma(2)},\ldots,x_{\sigma(n)}).$$
\end{defn}

\begin{thm}\label{blockshuffle}
For any $f(x_1,\ldots,x_n)\in\bg$, and any $1\leq r <n$, we have 
$$f(x_1x_2\cdots x_r\sh x_{r+1}\cdots x_n) = 0.$$
\end{thm}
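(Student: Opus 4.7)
The plan is to follow the template of Proposition \ref{duality}: proceed by induction on block degree, showing that (i) each generator $p_{2k+1}$ satisfies every block shuffle relation, and (ii) if $f,g\in\bg$ satisfy every block shuffle relation, so does $f\circ g$. Since $\bg$ is freely generated by $\{p_{2k+1}\}_{k\geq 1}$ under the Ihara bracket by Theorem \ref{gradedisbg}, these two steps suffice. For (i), $p_{2k+1}$ has only two variables, so the only relation to verify is $r=1$, namely $p_{2k+1}(x_1,x_2)+p_{2k+1}(x_2,x_1)=0$, which is the antisymmetry condition in Corollary \ref{newgendef}.

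For (ii), let $f$ have $m$ variables and $g$ have $n$ variables, so $f\circ g$ has $N := m+n-1$ variables. Using Theorem \ref{Iharanice} and interchanging the order of summation, $(f\circ g)(x_1\ldots x_r \sh x_{r+1}\ldots x_N)$ becomes a double sum over the Ihara position $i\in\{1,\ldots,n\}$ and shuffle permutations $\sigma\in\text{Sh}_{N,r}$. For each fixed $i$, I would stratify $\sigma$ by the number $s$ of indices from $\{1,\ldots,r\}$ lying in the $f$-window $\{\sigma(i),\ldots,\sigma(i+m-1)\}$. For fixed $(i,s)$ and a fixed choice of which absolute indices occupy the window, the remaining freedom in $\sigma$ factors as an internal shuffle of the $s$ left and $m-s$ right window variables (feeding $f$), times an external shuffle of the remaining $r-s$ left and $n-1-(r-s)$ right variables (seen by $g$). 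When $1\leq s\leq m-1$ the internal sum vanishes by the inductive hypothesis for $f$; when $s\in\{0,m\}$ the $f$-window is monochromatic, the Ihara prefactors become independent of the external shuffle, and the external sum vanishes by the inductive hypothesis for $g$.

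The main obstacle is combinatorial rather than conceptual: the Ihara prefactors $\frac{1}{x_{\sigma(i)}-x_{\sigma(i+m-1)}}$, $\frac{1}{x_{\sigma(i)}}$ and $\frac{1}{x_{\sigma(i+m-1)}}$ depend on the window endpoints, so one must check that they factor cleanly through the internal/external decomposition of the shuffle set. Further care is required at the boundary strata $s\in\{0,m\}$: there, the residual $g$-sum is a shuffle whose argument includes one of the window endpoints $x_{\sigma(i)}$ or $x_{\sigma(i+m-1)}$ inserted at the junction between the preceding and succeeding halves, and one must confirm that this matches an admissible block shuffle for $g$ with the appropriate parameter so that the induction hypothesis applies. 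Any terms of strictly lower block degree that appear along the way may be discarded, since we work modulo $\mathcal{B}^{N+1}$. Once this bookkeeping is complete, $f\circ g$ satisfies every block shuffle relation, and the induction closes.
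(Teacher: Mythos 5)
Your overall frame (verify the relations on the generators, then show they are preserved by the Ihara action, inducting on block degree) matches the paper's, but your inductive step has genuine gaps. First, you work with a general $m$-variable $f$, whereas the paper uses the fact that $\bg$ is generated by the $p_{2k+1}$ together with associativity of the Ihara action to reduce at once to $f=p_{2k+1}(x_1,x_2)$, i.e.\ $m=2$. This is not a convenience: for $m\geq 3$ your middle strata $1\leq s\leq m-1$ do not vanish the way you claim, because the factors $\frac{1}{x_{\sigma(i)}-x_{\sigma(i+m-1)}}$, $\frac{1}{x_{\sigma(i)}}$, $\frac{1}{x_{\sigma(i+m-1)}}$ and the arguments of $g$ (which contain one of the window endpoints) all vary as you run over the internal shuffle of the window; the internal sum is therefore $\sum_\tau f(\ldots)\cdot c_\tau$ with non-constant $c_\tau$, not a scalar multiple of $f(\cdots\sh\cdots)$, and the induction hypothesis for $f$ gives nothing. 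You flag this as bookkeeping, but the prefactors demonstrably do not factor through the internal/external decomposition; this is where the proof actually lives.

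Second, even in the boundary strata $s\in\{0,m\}$ (and already for $m=2$), for fixed $i$ and fixed window content the residual freedom in $\sigma$ is a product of two independent shuffles --- one interleaving the external left and right variables lying before the window, one interleaving those lying after --- since a shuffle permutation forces the window to contain a consecutive run of each of the two sequences. The external sum is thus of the shape $g(u_1\sh v_1,\,x_{\mathrm{endpt}},\,u_2\sh v_2)$, which is not one of the block shuffle relations for $g$ and does not vanish by the induction hypothesis. The paper avoids this by grouping the other way: it fixes the window content $(k,l)$ and sums over the window position $i$ as well, so that the union of permutation sets becomes a genuine shuffle set on the smaller alphabet $[n+1]\setminus\{k+1\}$; in addition it needs the pairwise cancellation $\sigma\leftrightarrow\tau_{k,l}\circ\sigma$ coming from the antisymmetry of $p_{2k+1}$ in the mixed case $k\leq r<r+1\leq l$, and a separate check that the exceptional sets arising at $i=r$ are empty. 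None of these three mechanisms appears in your stratification, so the inductive step as written does not close. (Also, the remark about discarding terms of lower block degree is vacuous here: the polynomial representation is already block-graded.)
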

\begin{proof}
For $p_{2k+1}$, this is equivalent to $p(x_1,x_2)+p(x_2,x_1)=0$, given by Proposition \ref{duality}. Then, as the Ihara action is associative, it in fact suffices to show that
$$(f\circ g)(x_1\ldots x_r\sh x_{r+1}\ldots x_{n+1}) = 0$$
for all $f=p_{2k+1}(x_1,x_2)$, $g(x_1,\ldots,x_n)\in\bg$.

 We write $(f\circ g)(x_1\ldots x_r\sh x_{r+1}\ldots x_{n+1})$ as
\begin{equation*}
\begin{split}
\sum_{\sigma\in \Sh_{n+1,r}}\sum_{i=1}^n\frac{f(x_{\sigma(i)},x_{\sigma(i+1)})}{x_{\sigma(i)}-x_{\sigma(i+1)}}&\times\\
\left(\frac{g(x_{\sigma(1)},\ldots,x_{\sigma(i)},x_{\sigma(i+2)},\ldots,x_{\sigma(n+1)})}{x_{\sigma(i)}}\right. &-\left.\frac{g(x_{\sigma(1)},\ldots,x_{\sigma(i-1)},x_{\sigma(i+1)},\ldots,x_{\sigma(n+1)})}{x_{\sigma(i+1)}}\right).
\end{split}
\end{equation*}

This sum splits as follows
\begin{equation*}
\begin{split}
\sum_{\sigma\in \Sh_{n+1,r}}\sum_{i=1}^{r-1}\frac{f(x_{\sigma(i)},x_{\sigma(i+1)})}{x_{\sigma(i)}-x_{\sigma(i+1)}}&\times\\
\left(\frac{g(x_{\sigma(1)},\ldots,x_{\sigma(i)},x_{\sigma(i+2)},\ldots,x_{\sigma(n+1)})}{x_{\sigma(i)}}\right. &-\left.\frac{g(x_{\sigma(1)},\ldots,x_{\sigma(i-1)},x_{\sigma(i+1)},\ldots,x_{\sigma(n+1)})}{x_{\sigma(i+1)}}\right)\\  
+\sum_{\sigma\in \Sh_{n+1,r}}\sum_{i=r+1}^n\frac{f(x_{\sigma(i)},x_{\sigma(i+1)})}{x_{\sigma(i)}-x_{\sigma(i+1)}}&\times\\
\left(\frac{g(x_{\sigma(1)},\ldots,x_{\sigma(i)},x_{\sigma(i+2)},\ldots,x_{\sigma(n+1)})}{x_{\sigma(i)}}\right. &-\left.\frac{g(x_{\sigma(1)},\ldots,x_{\sigma(i-1)},x_{\sigma(i+1)},\ldots,x_{\sigma(n+1)})}{x_{\sigma(i+1)}}\right)\\
+\sum_{\substack{\sigma\in \Sh_{n+1,r}\\
\text{such that}\\
\{\sigma(r),\sigma(r+1)\}\neq\{r,r+1\}}}\frac{f(x_{\sigma(r)},x_{\sigma(r+1)})}{x_{\sigma(r)}-x_{\sigma(r+1)}}&\times\\
\left(\frac{g(x_{\sigma(1)},\ldots,x_{\sigma(r)},x_{\sigma(r+2)},\ldots,x_{\sigma(n+1)})}{x_{\sigma(r)}}\right. &-\left.\frac{g(x_{\sigma(1)},\ldots,x_{\sigma(r-1)},x_{\sigma(r+1)},\ldots,x_{\sigma(n+1)})}{x_{\sigma(r+1)}}\right).
\end{split}
\end{equation*}

Denote the first sum by $A$, the second by $B$, and the third by $C$. Now, this sum can be written uniquely as
$$\sum_{1\leq k < l \leq n+1}\frac{f(x_k,x_l)}{x_k-x_l}\left(\frac{G_{k,l}}{x_k}-\frac{H_{k,l}}{x_l}\right)$$
where $G_{k,l}$, $H_{k,l}$ are polynomials related by swapping $x_k\leftrightarrow x_l$.
We have 4 cases to consider
\begin{enumerate}
    \item $l\leq r$,
    \item $k\geq r+1$,
    \item $k<r<r+1<l$,
    \item $k=r=l-1$.
\end{enumerate}

In the first case, both $A$ and $B$ only contribute non-zero terms if $l=k+1$, while $C$ only contributes if $l>k+1$. Thus, denoting by $\Phi_k(\sigma,i)$ the condition $\{\sigma(i)=k,\ \sigma(i+1)=k+1\}$, we have
\begin{equation*}
    \begin{split}
        G_{k,k+1}= {} &\sum_{k\leq i<r}\sum_{\substack{ \sigma\in \Sh_{n+1,r}\\
\text{such that } \Phi_k(\sigma,i)}} g(x_{\sigma(1)},\ldots,x_{\sigma(i-1)},x_k,x_{\sigma(i+2)},\ldots,x_{\sigma(n+1)})\\
        &+\sum_{i>r}\sum_{\substack{ \sigma\in \Sh_{n+1,r}\\
\text{such that }\Phi_k(\sigma,i)}} g(x_{\sigma(1)},\ldots,x_{\sigma(i-1)},x_k,x_{\sigma(i+2)},\ldots,x_{\sigma(n+1)})\\
        &+\sum_{\substack{\sigma\in \Sh_{n+1,r}\\
\text{such that } \Phi_k(\sigma,r)}} g(x_{\sigma(1)},\ldots,x_{\sigma(i-1)},x_k,x_{\sigma(i+2)},\ldots,x_{\sigma(n+1)}).\\
    \end{split}
\end{equation*}
Let $P(\sigma,r)$ denote the condition
$$\{\sigma^{-1}(1)<\cdots<\sigma^{-1}(r);\sigma^{-1}(r+1)<\cdots<\sigma^{-1}(n+1)\}.$$
Then, this is a sum over the set of permutations
\begin{equation*}
        \bigcup_{k\leq i <r}\{\sigma\mid\Phi_k(\sigma,i)\text{ and } P(\sigma,r)\} \cup\bigcup_{i>r}\{\sigma\mid\Phi_k(\sigma,i)\text{ and } P(\sigma,r)\}
\end{equation*}
which is clearly in bijection with a set of shuffles of $[n+1]\setminus\{k+1\}$, and so the contribution is $0$ by induction.

Then, if $l>k+1$, we find that the non-zero terms in
$$\frac{f(x_k,x_l)}{x_k-x_l}\left(\frac{G_{k,l}}{x_k}-\frac{H_{k,l}}{x_l}\right)$$ 
arising from permutations with $\sigma(r)=k,\sigma(r+1)=l$ cancel with those arising from $\sigma(r)=l,\sigma(r+1)=k$. Thus, in this case, 
$$\frac{f(x_k,x_l)}{x_k-x_l}\left(\frac{G_{k,l}}{x_k}-\frac{H_{k,l}}{x_l}\right)=0.$$ 

The second case, $k\geq r+1$, is similar. In the third case, every term arising from a permutation with $\sigma(i)=k,\sigma(i+1)=l$ cancels with the term arising from the permutation $\tau_{k,l}\circ\sigma$, where $\tau_{k,l}$ is the transposition $(k\ l)$.

Finally, in the fourth case, our sum splits into a sum over the following sets
$$\bigcup_{i<r}\{\sigma\in \Sh_{n+1,r} \mid \sigma(i)=r,\sigma(i+1)=r+1\},$$
$$\bigcup_{i>r}\{\sigma\in \Sh_{n+1,r} \mid \sigma(i)=r,\sigma(i+1)=r+1\},$$
$$\bigcup_{i<r}\{\sigma\in \Sh_{n+1,r} \mid \sigma(i)=r+1,\sigma(i+1)=r\},$$
$$\bigcup_{i>r}\{\sigma\in \Sh_{n+1,r} \mid \sigma(i)=r+1,\sigma(i+1)=r\}.$$
All of these must be empty due to the order preserving property of shuffle permutations. Thus,
$$(f\circ g)(x_1\cdots x_r\sh x_{r+1}\cdots x_{n+1}) = 0.$$
\end{proof}

\begin{cor}
For any finite sequence of integers $l_1,\ldots,l_n$, and any $1\leq r < n$, we have
$$\sum_{\sigma\in\Sh_{n,r}}\ibl(l_{\sigma(1)},\ldots,l_{\sigma(n)})=0$$
when considered modulo products.
\end{cor}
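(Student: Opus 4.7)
The plan is to deduce the identity by dualizing Theorem \ref{blockshuffle} through the pairing between the block-graded motivic Lie algebra and the block-graded Lie coalgebra $\text{gr}^\mathcal{B}\mathcal{L}$. Under the polynomial encoding $\pi_{\text{bl}}$ of \eqref{isomorph}, a word with block-length sequence $(l_1,\ldots,l_n)$ corresponds to the monomial $x_1^{l_1}\cdots x_n^{l_n}$, and the natural pairing of monomials realises the duality between $\bg$ (identified with its image in polynomials) and the span of the symbols $\ibl(l_1,\ldots,l_n)$ of block degree $n-1$ in $\text{gr}^\mathcal{B}\mathcal{L}$.

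The first step is to make this duality explicit: evaluating $f(x_1\sh_r\cdots\sh x_{r+1}\cdots x_n)$ on the monomial $x_1^{l_1}\cdots x_n^{l_n}$ yields the same scalar as evaluating $f(x_1,\ldots,x_n)$ on $\sum_{\sigma\in\text{Sh}_{n,r}}x_{\sigma(1)}^{l_1}\cdots x_{\sigma(n)}^{l_n}$, because the shuffle of variable labels and the shuffle of exponents are transpose operations under the diagonal pairing. Consequently, the functional ``take the coefficient of the shuffled tuple of block lengths'' on $\bg$ is represented, via $\pi_{\text{bl}}$, by the polynomial $f\mapsto f(x_1\cdots x_r\sh x_{r+1}\cdots x_n)$.

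Applying Theorem \ref{blockshuffle} now kills this functional on every element of $\bg$. Since, by Theorem \ref{gradedisbg}, $\bg\cong\text{gr}_\mathcal{B}\gmot$ is the Lie algebra dual to $\text{gr}^\mathcal{B}\mathcal{L}$, the perfect pairing in each fixed weight and block degree forces
\[
\sum_{\sigma\in\text{Sh}_{n,r}}\ibl(l_{\sigma(1)},\ldots,l_{\sigma(n)})=0
\]
in $\text{gr}^\mathcal{B}_{n-1}\mathcal{L}$. Because $\mathcal{L}=\mathcal{A}_{>0}/\mathcal{A}_{>0}\mathcal{A}_{>0}$ is exactly ``modulo products,'' and $\ibl$ is defined as the image in the block-graded quotient, this is the stated identity.

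The main obstacle is the bookkeeping in the first step: one must verify that the natural monomial pairing really does identify $\bg$ with the dual of the subspace spanned by the $\ibl$ symbols of the appropriate block degree, and that the shuffle-of-variables operation on polynomials is genuinely transpose to the shuffle-of-arguments operation on block-length tuples under this pairing. Once that transposition is pinned down, the corollary is immediate from Theorem \ref{blockshuffle} and Theorem \ref{gradedisbg}.
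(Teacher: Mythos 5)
Your argument is correct and is essentially the paper's own proof: both deduce the identity from Theorem \ref{blockshuffle} by dualizing through the monomial pairing $\langle \ibl(l_1,\ldots,l_n)\,|\,x_1^{k_1}\cdots x_n^{k_n}\rangle=\delta_{l_1,k_1}\cdots\delta_{l_n,k_n}$ between $\gr^\mathcal{B}\mathcal{L}$ and $\bg$, using Theorem \ref{gradedisbg} to know this pairing detects relations. Your explicit remark that shuffling variable labels is transpose to shuffling the block-length tuple is exactly the (implicit) computation in the paper, so no further comment is needed.
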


\begin{proof}
Using Theorem \ref{gradedisbg}, we can consider $\bg$ as the dual Lie algebra to the graded Lie coalgebra of indecomposables $\gr^\mathcal{B}\mathcal{L}$, and hence, relations among the coefficients of elements of $\bg$ induce relations among elements of $\gr^\mathcal{B}\mathcal{L}$. Specifically, we define a $\Q$-linear pairing
$$\langle \ibl(l_1,\ldots,l_n) \mid x_1^{k_1}\ldots x_m^{k_m}\rangle := \delta_{n.m}\delta_{l_1,k_1}\ldots\delta_{l_n,k_n}$$
where $\ibl(l_1,\ldots,l_n)$ is the image of $\ib(l_1,\ldots,l_n)$ in $\gr_\mathcal{B}\mathcal{L}$. We have that $R$ is a relation in $\gr^\mathcal{B}\mathcal{L}$ if and only if $\langle R \mid f\rangle=0$ for all $f\in\bg$. Hence, as $f(x_1x_2\cdots x_r\sh x_{r+1}\cdots x_n) = 0$ for all $f\in\bg$, we must have that
$$\sum_{\sigma\in\Sh_{n,r}}\ibl(l_{\sigma(1)},\ldots,l_{\sigma(n)})=0.$$
\end{proof}

\begin{cor}[Block-graded cyclic insertion]\label{cyclic}
Let $\Cyc_n$ be the cyclic group generated by the permutation $(1\ 2\cdots n)$. Then the cyclic sum vanishes:
$$\sum_{\sigma\in\Cyc_n}\ibl(l_{\sigma(1)},l_{\sigma(2)},\ldots,l_{\sigma(n)}) = 0,$$
\end{cor}
\begin{proof}
It suffices to show that
$$\sum_{\sigma\in\Cyc_n} f(x_{\sigma(1)},x_{\sigma(2)},\ldots,x_{\sigma(n)})=0$$
for all $f\in\bg$.

Suppose $f\in\bg$. Then, Theorem \ref{blockshuffle} implies that the image of $f$ under the following vector space isomorphism
\begin{equation}\label{zalphabet}
\begin{split}
\bigoplus_{n=0}^\infty \Q[x_1,\ldots,x_n] &\xrightarrow{\sim} \Q\langle z_1,z_2,z_3,\ldots\rangle\\
x_1^{i_1} x_2^{i_2}\ldots x_n^{i_n} &\mapsto z_{i_1}z_{i_2}\cdots z_{i_n}
\end{split}
\end{equation}
lies in $\Lie[z_1,z_2,\ldots]$. In particular, the image lies in the span of elements of degree at least 2. Now, we define a linear map $\mathcal{C}:\Q\langle z_1,z_2,\ldots\rangle\to \Q\langle z_1,z_2,\ldots\rangle$ by
$$\mathcal{C}(z_{i_1}z_{i_2}\cdots z_{i_n}) = \sum_{\sigma\in\Cyc_n}z_{i_{\sigma(1)}}z_{i_{\sigma(2)}}\cdots z_{i_{\sigma(n)}}$$
for a word of length $n$. Thus, it suffices to show that $\mathcal{C}(Z)=0$ for all $Z\in\Lie[z_1,z_2,\ldots]$ of degree at least 2.

Note, for any monomials $X,\ Y$ in $\{z_1,z_2,\ldots\}$ of degree $k,\ n-k$ respectively, we have $[X,Y] = XY - \sigma(XY)$, for some $\sigma\in\Cyc_n$ acting by cyclic rotations on words of length $n$. Thus,
$$\mathcal{C}([X,Y])=\mathcal{C}(XY)-\mathcal{C}(\sigma(XY))=\mathcal{C}(XY)-\mathcal{C}(XY)=0$$
and so the image of  any element of degree at least two in $\Lie[z_1,z_2,\ldots]$ is zero, and hence 
$$\sum_{\sigma\in\Cyc_n} f(x_{\sigma(1)},x_{\sigma(2)},\ldots,x_{\sigma(n)})=0$$
\end{proof}

\begin{remark}
As in this proof, it can be useful to consider $\bg$ as a subspace of the Hopf $ \Q\langle z_1,z_2,z_3,\ldots\rangle$, with the standard concatenation product, and a coproduct given by $\Delta z_i =z_i\otimes 1 + 1\otimes z_i$. For example, Theorem \ref{blockshuffle} implies elements of $\bg$ are primitive for this coproduct, we immediately obtain Proposition \ref{duality} as a corollary, by considering the antipode map, i.e. the antihomomorphism $z_i\mapsto -z_i$. This is an idea explored further in the author's thesis \cite{mythesis}.
\end{remark}

\begin{example}
The cyclic insertion conjecture is expected to hold, modulo $\zm(2)$ in the ungraded setting as well. For example, one can check that the following hold exactly, up to powers of $\zm(2)$.
\begin{equation*}
\begin{split}
\im(4,3,2,2)+\im(3,2,2,4)+\im(2,2,4,3)+\im(2,4,3,2) &=0\\
-\zm(2,1,2,1,3)+\zm(3,1,3,2)-\zm(1,3,2,1,2)+\zm(1,2,3,3)&=0
\end{split}
\end{equation*}
The block shuffle relation has additional terms of lower order. For example, the following hold exactly.
\begin{equation*}
\begin{split}
\im(4,3,2,2)+\im(3,4,2,2)+\im(3,2,4,2)+\im(3,2,2,4)&=\im(9,3)+\im(3,8)\\
-\zm(2,1,2,1,3)+\zm(3,2,1,3)+\zm(3,1,2,3)+\zm(3,1,3,2)&=\zm(2,2,2,3)+\zm(3,2,2,2)
\end{split}
\end{equation*}
An explicit general form for the ungraded block shuffle has been conjectured by Hirose and Sato, and is discussed in greater detail in Chapter 4 of the author's thesis \cite{mythesis}.\end{example}

\section{Shuffle Regularisation}

The double shuffle relations among iterated integrals are not, in general, compatible with the block filtration. However, the regularisation relation obtained by shuffling with an element of weight 1, does respect the block filtration.

\begin{thm}\label{weight1shuff}
Let $\pi_1:\qpoly\to\Q e_0\oplus\Q e_1$ denote the projection map onto weight 1 , and let $\Delta:\qpoly\to\qpoly\otimes\qpoly$ be the coproduct defined by $\Delta(e_i)=e_i\otimes 1 + 1\otimes e_i$. The map $\Delta_1:=(\pi_1\otimes\id)\Delta$ is compatible with the block filtration:
$$\Delta_1\mathcal{B}^n\qpoly\subset\mathcal{B}^1\qpoly\otimes\mathcal{B}^{n-1}\qpoly.$$
\end{thm}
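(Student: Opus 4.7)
The plan is to make $\Delta_1$ concrete on a word and then reduce the statement to a purely combinatorial fact about how $\degb(e_0ve_1)$ changes when a single letter of $v$ is deleted.

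Since $\Delta$ is the algebra homomorphism on $(\qpoly,\text{concatenation})$ making each $e_i$ primitive, expanding
$$\Delta(a_1 a_2\cdots a_n) \;=\; \prod_{j=1}^n (a_j\otimes 1 + 1\otimes a_j)$$
and projecting the left tensor factor onto weight $1$ picks out exactly those terms in which the left factor carries a single letter. Hence
$$\Delta_1(a_1\cdots a_n) \;=\; \sum_{j=1}^n a_j\,\otimes\, a_1\cdots \widehat{a_j}\cdots a_n.$$
The left factor $a_j$ is a single letter, and by definition $\degb(e_0 a_j e_1) = 1$, so $a_j \in \mathcal{B}^1\qpoly$. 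The problem thus reduces to showing that deleting any single letter of $w$ decreases $\degb(e_0we_1)$ by at most $1$.

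For that, I will use the descent description $\degb(v) = \#\{i : v_i = v_{i+1}\}$. Write $e_0 w e_1 = b_0 b_1 \cdots b_n b_{n+1}$; removing the letter $a_j = b_j$ (with $1 \le j \le n$, so that the boundary letters $b_0,b_{n+1}$ are never touched) leaves every adjacent pair $(b_i,b_{i+1})$ with $i\notin\{j-1,j\}$ intact and merges the two pairs $(b_{j-1},b_j),(b_j,b_{j+1})$ into the single pair $(b_{j-1},b_{j+1})$. The net change in block degree is therefore exactly
$$[b_{j-1} = b_{j+1}]\;-\;[b_{j-1} = b_j]\;-\;[b_j = b_{j+1}].$$
A short case check on $(b_{j-1},b_j,b_{j+1})\in\{e_0,e_1\}^3$ -- exploiting that in a two-letter alphabet $b_{j-1}\neq b_j$ and $b_j\neq b_{j+1}$ force $b_{j-1}=b_{j+1}$ -- shows this quantity is always $\pm 1$, equal to $+1$ precisely when $b_j$ differs from both its neighbours and to $-1$ otherwise.

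In particular the change is always $\geq -1$, so setting $w^{(j)} := a_1\cdots\widehat{a_j}\cdots a_n$ gives $\degb(e_0 w^{(j)} e_1)\geq \degb(e_0we_1)-1\geq n-1$, i.e.\ $w^{(j)}\in\mathcal{B}^{n-1}\qpoly$. Combined with $a_j\in\mathcal{B}^1\qpoly$, this yields $\Delta_1(w) \in \mathcal{B}^1\qpoly \otimes \mathcal{B}^{n-1}\qpoly$ for every $w \in \mathcal{B}^n\qpoly$, as required. I do not expect any real obstacle: the entire content of the proof sits in the one-letter deletion computation, which is elementary thanks to the binary alphabet. The only subtlety is remembering that the endpoints $b_0 = e_0$ and $b_{n+1}=e_1$ are never deleted, so the case analysis really is over three-letter neighbourhoods in $\{e_0,e_1\}^3$.
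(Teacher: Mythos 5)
Your proof is correct and follows essentially the same route as the paper: expand $\Delta_1(w)$ as $\sum_j a_j\otimes w^{(j)}$ and reduce to the observation that deleting one internal letter of $e_0we_1$ changes the block degree by exactly $\pm1$ (the paper phrases this as the letter being internal to a block versus at a block boundary, while you verify it via the descent count $\degb(v)=\#\{i: v_i=v_{i+1}\}$). Your version simply makes the case analysis explicit where the paper leaves it implicit.
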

\begin{proof}
For $w\in\qpoly$ every term in $\Delta_1(w)$ is of the form $e_i\otimes \bar{w}$ for $i\in\{0,1\}$, where $\bar{w}$ is obtained from $w$ by omitting a letter. The left hand side is of block degree 1. The right hand side is of higher block degree, if the omitted letter was internal to a block, and of block degree 1 lower than $w$, if the omitted letter was at the beginning or end of a block.
\end{proof}

Thus, we can take the associated graded map of $\Delta_1$.
\begin{cor}\label{gradedshuffle}
The graded version of $\Delta_1$ annihilates $\bg$:
$\gr_\mathcal{B}(\Delta_1)(\bg)=0.$
\end{cor}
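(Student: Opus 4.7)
The plan is to deduce the corollary from the strictly stronger assertion that $\Delta_1$ already vanishes identically on the filtered Lie algebra $\gmot\hookrightarrow\qpoly$ itself, not merely on its associated graded. By Theorem~\ref{gradedisbg}, $\bg$ is isomorphic to $\gr_\mathcal{B}\gmot$, so every class in $\bg$ is represented by some $f\in\mathcal{B}^n\gmot\subset\qpoly$, and the block compatibility of $\Delta_1$ established in Theorem~\ref{weight1shuff} ensures that $\gr_\mathcal{B}(\Delta_1)$ applied to the class of $f$ is represented by $\Delta_1(f)$. It therefore suffices to prove $\Delta_1(f)=0$ for every $f\in\gmot$.

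The key observation is that, in the fixed embedding $\gmot\hookrightarrow\qpoly$, each generator $\sigma_{2k+1}$ may be (and, as is standard, will be) chosen as a Lie polynomial in $\{e_0,e_1\}$. Since the Ihara bracket preserves the subspace $\text{Lie}[e_0,e_1]\subset\qpoly$, the entire image of $\gmot$ then lies in $\text{Lie}[e_0,e_1]$. But $\text{Lie}[e_0,e_1]$ is exactly the space of primitive elements for the coproduct $\Delta$ characterised by $\Delta(e_i)=e_i\otimes 1+1\otimes e_i$, so
$$\Delta(f)=f\otimes 1+1\otimes f\quad\text{for every }f\in\gmot.$$

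Applying $\pi_1\otimes\text{id}$ then gives $\Delta_1(f)=\pi_1(f)\otimes 1+\pi_1(1)\otimes f$. Every element of $\gmot$ is supported in weights $\geq 3$, so $\pi_1(f)=0$; and $\pi_1(1)=0$ trivially, since $1$ has weight $0$. Therefore $\Delta_1$ vanishes identically on $\gmot$, and passing to the associated graded yields $\gr_\mathcal{B}(\Delta_1)(\bg)=0$.

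There is no substantive obstacle to overcome: the content is really the clean fact that shuffle regularisation against a weight-$1$ letter gives relations which hold \emph{exactly} on the Lie algebra side, and hence survive a fortiori in the associated graded. The only piece of care required is to match the block-graded object $\bg$ with the filtered object $\gmot$ via the isomorphism of Theorem~\ref{gradedisbg}, so that the vanishing of $\Delta_1$ on $\gmot$ transfers to the vanishing of its associated graded on $\bg$.
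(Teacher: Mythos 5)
Your argument is correct and is essentially the paper's own proof, merely spelled out in more detail: the paper likewise reduces to the fact (citing Brown and Racinet) that elements of $\gmot$ are primitive for the shuffle coproduct, so that $\Delta_1$ vanishes on $\gmot$ itself and hence on the associated graded via Theorem~\ref{weight1shuff}. No substantive difference.
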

\begin{proof}
This follows from the work of Brown \cite{brownmixedtate} and Racinet \cite{racinet}, as any element $\psi\in\gmot$ satisfies $\Delta(\psi)=0$.
\end{proof}

In low degree, we can translate this to a statement about elements of $\bg$ considered as polynomials.

\begin{example}\label{singleshuffexplicit}
For $f(x_1,x_2)\in\bg$ and $g(x_1,x_2,x_3)\in\bg$, we have
\begin{equation*}
\begin{split}
x\frac{\partial f}{\partial x_1}(0,x) &= f(x,-x),\\
\\
yz\left(\frac{\partial g}{\partial x_1}(0,y,z) \right. &- \left. \frac{\partial g}{\partial x_1}(0,y,-z)\right)=\\
y\left(g(y,z,-z)+g(-y,z,-z)\right)&+z\left(g(-y,y,-z)-g(-y,y,z)\right),\\
\\
yz\left(\frac{\partial g}{\partial x_1}(0,y,z) + \frac{\partial g}{\partial x_1}(0,y,-z)\right. &+\left.\frac{\partial g}{\partial x_2}(y,0,z) + \frac{\partial g}{\partial x_2}(y,0,-z)\right)=\\
y\left(g(y,z,-z)-g(-y,z,-z)\right)&-z\left(g(-y,y,-z)+g(-y,y,z)\right).
\end{split}
\end{equation*}
\end{example}

In order to better describe elements of $\bg$, we use the following lemma to transform our polynomial representation.

\begin{lem}\label{reducable}
For $f(x_1,x_2,\ldots,x_n)\in\bg$, we can write
$$f(x_1,\ldots,x_n)=x_1\ldots x_n(x_1-x_n)r(x_1,\ldots,x_n)$$
for some polynomial $r\in\Q[x_1,\ldots,x_n]$.
\end{lem}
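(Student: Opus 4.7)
The plan is to proceed by induction on block degree, using that $\bg$ is freely generated as a Lie algebra by $\{p_{2k+1}\}_{k \geq 1}$ under the Ihara bracket (Theorem \ref{gradedisbg}). A preliminary observation simplifies the task: since $\pi_{\text{bl}}$ sends a word with block decomposition $(\epsilon; l_1, \ldots, l_n)$ to the monomial $x_1^{l_1}\cdots x_n^{l_n}$ with every $l_i \geq 1$, every element of $\bg$ represented in $n$ variables is automatically divisible by $x_1 \cdots x_n$. The substantive content of the lemma is therefore divisibility by the single factor $(x_1 - x_n)$.

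For the base case, Theorem \ref{summedgen} already gives $p_{2k+1}(x_1, x_2) = x_1 x_2 (x_1 - x_2) \cdot (\ldots)$ explicitly, so the property holds for each generator. For the inductive step, suppose $f(x_1, \ldots, x_m) = x_1 \cdots x_m(x_1 - x_m)\tilde{f}$ and $g(x_1, \ldots, x_n) = x_1 \cdots x_n(x_1 - x_n)\tilde{g}$ both satisfy the lemma. I would show that the Lie bracket $[f, g] = f \circ g - g \circ f$ vanishes at $x_1 = x_{m+n-1}$. Using the explicit formula of Theorem \ref{Iharanice}, set $x_1 = x_{m+n-1} = x$ in $(f \circ g)(x_1, \ldots, x_{m+n-1})$. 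For each interior index $i = 2, \ldots, n-1$, both $g$-factors appearing in the $i$-th summand have $x_1$ as first argument and $x_{m+n-1}$ as last argument, so by the inductive hypothesis each such $g$-factor is divisible by $(x_1 - x_{m+n-1})$ and the summand vanishes. Only the boundary indices $i = 1$ and $i = n$ contribute, since there one of the two $g$-factors has its extremal argument shifted to $x_m$ or $x_n$ and therefore need not vanish.

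Substituting the inductive expressions for $f$ and $g$ and cancelling the $(x - x_m)$ and $(x - x_n)$ factors against the matching Ihara denominators, the two surviving boundary contributions combine into
\[
C \cdot \bigl[(x - x_n)\,\tilde{f}(x_n, \ldots, x_{m+n-2}, x)\,\tilde{g}(x, x_2, \ldots, x_n) + (x - x_m)\,\tilde{f}(x, x_2, \ldots, x_m)\,\tilde{g}(x_m, \ldots, x_{m+n-2}, x)\bigr]
\]
for a common polynomial prefactor $C = (-1)^{(m+1)(n+1)} x^2 \cdot x_2 \cdots x_{m+n-2}$. Performing the analogous computation for $g \circ f$ (with $m$ and $n$ swapped, so the surviving boundary indices are $i = 1$ and $i = m$) yields exactly the same expression, whence $[f, g]$ vanishes at $x_1 = x_{m+n-1}$ and the induction closes. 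The main obstacle is precisely that the individual Ihara actions $f \circ g$ do \emph{not} on their own preserve divisibility by $(x_1 - x_{m+n-1})$: the cancellation occurs only after antisymmetrising to the Lie bracket, so the argument hinges on carefully tracking the two boundary summands and matching them against their $f \leftrightarrow g$ partners.
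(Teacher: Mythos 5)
Your proposal is correct and follows essentially the same route as the paper: induction via the explicit Ihara-action formula, with the key point being that the interior summands are killed by the inductive divisibility hypothesis while the two surviving boundary summands of $f\circ g$ cancel against those of $g\circ f$. The only differences are cosmetic --- the paper runs the inductive step just for $f=f(x_1,x_2)$ (which suffices since left-normed brackets of the generators span $\bg$) and checks divisibility of the resulting expression directly, whereas you treat general $m$ and phrase divisibility by $x_1-x_{m+n-1}$ as vanishing on the hyperplane $x_1=x_{m+n-1}$.
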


\begin{proof}
We induct on the number of variables. For $n=2$, this follows from Theorem \ref{summedgen}. Now, suppose this factorisation holds for $f(x_1,x_2),\ g(x_1,\ldots,x_n)\in\bg$. We have
\begin{equation*}
\begin{split}
\{f,g\}= {} &-\sum_{i=1}^ng(x_1,\ldots,x_n)\left(\frac{1}{x_1}f(x_1,x_{n+1})-\frac{1}{x_n}f(x_n,x_{n+1})\right)\\
&+g(x_2,\ldots,x_{n+1}\left(\frac{1}{x_2}f(x_1,x_2)-\frac{1}{x_{n+1}}f(x_1,x_{n+1})\right)\\
-\frac{f(x_i,x_{i+1})}{x_i-x_{i+1}}&\left(\frac{1}{x_i}g(x_1,\ldots,x_i,x_{i+2}\ldots,x_{n+1})-\frac{1}{x_{i+1}}g(x_1,\ldots,x_{i-1},x_{i+1},\ldots,x_{n+1})\right)\\
\end{split}
\end{equation*}
Applying our induction hypothesis, we find
\begin{align*}
\{f,g\}= {} & x_1\ldots x_{n+1}r_f(x_1,x_2)\\
&\times\big((x_1-x_{n+1})r_g(x_1,x_3,\ldots,x_{n+1}) - (x_2-x_{n+1})r_g(x_2,\ldots,x_{n+1})\big)\\
&+\sum_{i=2}^{n-1} x_1\ldots x_{n+1}(x_1-x_{n+1})r_f(x_i,x_{i+1})\\
&\times\big(r_g(x_1,\ldots,x_i,x_{i+2}\ldots,x_{n+1})-r_g(x_1,\ldots,x_{i-1},x_{i+1},\ldots,x_{n+1})\big)\\
&+ x_1\ldots x_{n+1}r_f(x_n,x_{n+1})\\
&\times\big((x_1-x_{n})r_g(x_1,\ldots,x_{n})-(x_1-x_{n+1})r_g(x_1,\ldots,x_{n-1},x_{n+1})\big)\\
&-x_1\ldots x_{n+1}r_g(x_1,\ldots,x_n)\\
&\times\big((x_1-x_{n+1})r_f(x_1,x_{n+1})-(x_n-x_{n+1})r_f(x_n,x_{n+1})\big)\\
&-x_1\ldots x_{n+1}r_g(x_2,\ldots,x_{n+1})\\
&\times\big((x_1-x_2)r_f(x_1,x_2)-(x_1-x_{n+1})r_f(x_1,x_{n+1})\big).
\end{align*}

Considering only the terms not immediately divisible by $x_1\ldots x_{n+1}(x_1-x_{n+1})$, we reduce the problem to showing that
\begin{equation*}
\begin{split}
& -x_1\ldots x_{n+1}(x_2-x_{n+1})r_f(x_1,x_2) r_g(x_2,\ldots,x_{n+1})\\
&+ x_1\ldots x_{n+1}(x_1-x_n)r_f(x_n,x_{n+1})r_g(x_1,\ldots,x_n)\\
&+x_1\ldots x_{n+1}(x_n-x_{n+1})r_f(x_n,x_{n+1})r_g(x_1,\ldots,x_n)\\
&-x_1\ldots x_{n+1}(x_1-x_2)r_f(x_1,x_2)r_g(x_2,\ldots,x_{n+1})\\
= {} &-x_1\ldots x_{n+1}(x_1-x_{n+1})r_f(x_1,x_2) r_g(x_2,\ldots,x_{n+1})\\
&+ x_1\ldots x_{n+1}(x_1-x_{n+1})r_f(x_n,x_{n+1})r_g(x_1,\ldots,x_n)
\end{split}
\end{equation*}
is divisible by $x_1\ldots x_{n+1}(x_1-x_{n+1})$. This is clear, so we are done.

\end{proof}

\begin{defn}
For $f(x_1,\ldots,x_n)\in\bg$, define the reduced block polynomial to be 
$$r(x_1,\ldots,x_n):=\frac{f(x_1,\ldots,x_n)}{x_1\ldots x_n(x_1-x_n)}.$$
Define $\mathfrak{rbg}$ to be the bigraded $\Q$-vector space of reduced block polynomials.
\end{defn}

\begin{remark}
It may be useful to recall how the various degrees we assign to motivic iterated integrals relate to the reduced block polynomials. A reduced block polynomial $r(x_1,x_2,\ldots,x_n)$ of degree $N$ corresponds to elements of weight $N+n-1$ and block degree $n-1$.
\end{remark}

\section{The dihedral action}
As an immediate corollary to Proposition \ref{duality} we obtain:

\begin{lem}\label{reflection}
For all $r(x_1,\ldots,x_n)\in\mathfrak{rbg}$,
$$r(x_n,\ldots,x_1) = (-1)^nr(x_1,\ldots,x_n).$$
\end{lem}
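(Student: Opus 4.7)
The plan is to unfold the definition of the reduced block polynomial and apply Proposition \ref{duality} directly, since the statement is really just a bookkeeping consequence of the reflection identity on $\bg$ combined with the sign picked up by the factor $x_1 - x_n$.

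Concretely, I would start by writing $f(x_1,\ldots,x_n) = x_1\cdots x_n(x_1-x_n)\,r(x_1,\ldots,x_n)$ for some $f \in \bg$, as guaranteed by Lemma \ref{reducable}, and then substitute $x_i \mapsto x_{n+1-i}$ on both sides. The monomial $x_1 \cdots x_n$ is symmetric under this reversal and so contributes nothing, while the factor $(x_1 - x_n)$ becomes $(x_n - x_1) = -(x_1 - x_n)$, contributing a single sign. Meanwhile Proposition \ref{duality} tells us $f(x_n,\ldots,x_1) = (-1)^{n+1} f(x_1,\ldots,x_n)$. Putting these together,
\[
x_1\cdots x_n (x_n - x_1)\, r(x_n,\ldots,x_1) = (-1)^{n+1} x_1 \cdots x_n (x_1 - x_n)\, r(x_1,\ldots,x_n),
\]
and dividing by $-x_1\cdots x_n(x_1-x_n)$ yields $r(x_n,\ldots,x_1) = (-1)^n r(x_1,\ldots,x_n)$.

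There is essentially no obstacle here; the only thing to be careful about is the parity bookkeeping, namely that the sign from reversing $(x_1 - x_n)$ combines with the $(-1)^{n+1}$ from Proposition \ref{duality} to give $(-1)^{n+2} = (-1)^n$, matching the statement of the lemma. No induction, no coproduct manipulation, and no appeal to the Ihara action is needed beyond what was already invoked to prove duality.
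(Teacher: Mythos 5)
Your proposal is correct and is exactly the argument the paper intends: the paper states the lemma as an ``immediate corollary'' of Proposition \ref{duality} with no further detail, and your sign bookkeeping (the symmetric factor $x_1\cdots x_n$, the single sign from $(x_1-x_n)$, and the $(-1)^{n+1}$ from duality) is precisely the omitted computation. No issues.
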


\begin{defn}
We define a Lie algebra structure on $\mathfrak{rbg}$ via the Lie bracket

$$\{r_1,r_2\}(x_1,\ldots,x_{m+n-1}):=\frac{\{f_1,f_2\}(x_1,\ldots,x_{m+n-1})}{x_1\ldots x_{m+n-1}(x_1-x_{m+n-1})}$$
 for $r_1(x_1,\ldots,x_m)=\frac{f_1(x_1,\ldots,x_m)}{x_1\ldots x_m(x_1-x_m)}, r_2(x_1,\ldots,x_n)=\frac{f_2(x_1,\ldots,x_n)}{x_1\ldots x_n(x_1-x_n)}\in\mathfrak{rbg}$. We call this the reduced Ihara bracket. It produces a polynomial of degree $\deg(r_1)+\deg(r_2)$.
\end{defn}

We can explicitly compute this, and in the case of $r_1=r_1(x_1,x_2)$, we obtain a particularly nice formula.

\begin{prop}\label{reducedihara}
For $r(x_1,x_2),\ q(x_1,\ldots,x_{n-1})\in\mathfrak{rbg}$, the reduced Ihara bracket is given by
\begin{equation*}
\begin{split}
\{r,q\}(x_1,\ldots,x_n)=& \\
\sum_{i=1}^n r(x_i,x_{i+1})(q(x_1,\ldots,x_i,x_{i+2},\ldots,x_n)&-q(x_1,\ldots,x_{i-1},x_{i+1},\ldots,x_n))
\end{split}
\end{equation*}
where we consider indices modulo $n$.
\end{prop}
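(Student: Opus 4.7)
The plan is a direct computation using the Ihara action formula from Theorem \ref{Iharanice}. Writing $f(x_1,x_2) = x_1x_2(x_1-x_2)r(x_1,x_2)$ and $g(x_1,\ldots,x_{n-1}) = x_1\cdots x_{n-1}(x_1-x_{n-1})q(x_1,\ldots,x_{n-1})$, so that $\{r,q\} = \{f,g\}/(x_1\cdots x_n(x_1-x_n))$ by definition, I would carry out the following steps.

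First, expand $f\circ g$ via Theorem \ref{Iharanice} with $m=2$, giving a sum over $i=1,\ldots,n-1$. The substitution $\frac{f(x_i,x_{i+1})}{x_i-x_{i+1}} = x_ix_{i+1}r(x_i,x_{i+1})$ is immediate. The key subtlety is that for the middle indices $2 \leq i \leq n-2$, both $g$-evaluations have arguments starting with $x_1$ and ending with $x_n$, so each carries the common factor $(x_1-x_n)$ which cancels the final denominator and cleanly yields the corresponding terms of the proposition. For the boundary indices $i=1$ and $i=n-1$, however, the two $g$-evaluations instead carry different factors: at $i=1$ one has $(x_1-x_n)$ while the other has $(x_2-x_n)$, and at $i=n-1$ one has $(x_1-x_{n-1})$ while the other has $(x_1-x_n)$. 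Division then produces a clean polynomial piece plus a rational-coefficient remainder.

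Next, expand $g\circ f$; since $f$ has only two variables, the Ihara sum has just two terms ($i=1,2$). Substituting the factorisations and dividing by $x_1\cdots x_n(x_1-x_n)$, this contributes: an $r(x_1,x_n)$-coefficient of $q(x_1,\ldots,x_{n-1}) - q(x_2,\ldots,x_n)$ (which, via the two-variable case of Lemma \ref{reflection} identifying $r(x_1,x_n) = r(x_n,x_1)$, will supply the missing cyclic $i=n$ term); an $r(x_{n-1},x_n)$-coefficient of $-\frac{x_{n-1}-x_n}{x_1-x_n}q(x_1,\ldots,x_{n-1})$; and an $r(x_1,x_2)$-coefficient of $\frac{x_1-x_2}{x_1-x_n}q(x_2,\ldots,x_n)$.

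The main obstacle is verifying that all of the rational-coefficient pieces combine cleanly in $\{r,q\} = (f\circ g - g\circ f)/(x_1\cdots x_n(x_1-x_n))$. This comes down to two telescoping identities. Grouping the $r(x_1,x_2)$-coefficient, the remainder $-\frac{x_2-x_n}{x_1-x_n}q(x_2,\ldots,x_n)$ from $f\circ g$ at $i=1$ combines with $-\frac{x_1-x_2}{x_1-x_n}q(x_2,\ldots,x_n)$ from $-g\circ f$, and the two rational factors collapse via $(x_2-x_n)+(x_1-x_2) = x_1-x_n$, producing exactly the polynomial $-q(x_2,\ldots,x_n)$. A symmetric identity $(x_1-x_{n-1})+(x_{n-1}-x_n) = x_1-x_n$ handles the $r(x_{n-1},x_n)$-coefficient, and once these cancellations are verified, the remaining assembly yields precisely the cyclic formula of the proposition.
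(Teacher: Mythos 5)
Your proposal is correct and supplies exactly the direct computation the paper alludes to but omits: substituting the factorisations of Lemma \ref{reducable} into the $m=2$ case of Theorem \ref{Iharanice}, observing that only the boundary terms fail to divide cleanly by $x_1\cdots x_n(x_1-x_n)$, and checking that these combine with the two terms of $g\circ f$ via the telescopings $(x_2-x_n)+(x_1-x_2)=x_1-x_n$ and $(x_1-x_{n-1})+(x_{n-1}-x_n)=x_1-x_n$, with $r(x_n,x_1)=r(x_1,x_n)$ from Lemma \ref{reflection} giving the cyclic $i=n$ term. The only point to flag is the overall prefactor $(-1)^{(m+1)(n+1)}$ in Theorem \ref{Iharanice}, which you suppress exactly as the paper itself does in the computation within Lemma \ref{reducable}, so your argument is consistent with the paper's conventions.
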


\begin{cor}\label{cyclicinvariance}
Elements $r(x_1,\ldots,x_n)\in\mathfrak{rbg}$ are invariant under cyclic rotations:
$$r(x_1,\ldots,x_n)=r(x_2,\ldots,x_n,x_1).$$
\end{cor}
\begin{proof}
This follows from a simple induction argument, using Lemma \ref{reflection} as our base case, and the natural cyclic symmetry in Proposition \ref{reducedihara}.
\end{proof}

\begin{remark}
Corollary \ref{cyclic} follows as an immediate corollary to this invariance.
\end{remark}

With this cyclic invariance, we can write down the general case of the reduced Ihara bracket quite succinctly.
\begin{cor}\label{generalreducedihara}
For $r(x_1,\ldots,x_m),q(x_1,\ldots,x_n)\in\rbg$, the reduced Ihara bracket is given by
\begin{equation*}
\begin{split}
\{r,q\}(x_1,\ldots,x_{m+n-1})&=\\
\sum_{i=1}^{m+n-1}r(x_i,\ldots,x_{i+m-1})&\left(q(x_{i+m},\ldots,x_{m+n-1},x_1,\ldots,x_i)\right.\\
&-\left. q(x_{i+m-1},\ldots,x_{m+n-1},x_1,\ldots,x_{i-1})\right)
\end{split}
\end{equation*}
where the indices are considered modulo $m+n-1$.
\end{cor}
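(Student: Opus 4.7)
The plan is to prove the formula by direct computation from Theorem \ref{Iharanice}, using the polynomial identifications $f(y_1,\ldots,y_m) = y_1\cdots y_m(y_1-y_m)r(y_1,\ldots,y_m)$ and $g(y_1,\ldots,y_n) = y_1\cdots y_n(y_1-y_n)q(y_1,\ldots,y_n)$, and then applying the cyclic invariance of reduced block polynomials from Corollary \ref{cyclicinvariance} to match the result to the claimed cyclic form.

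First, I would expand $f\circ g$ from Theorem \ref{Iharanice}. Each of the $n$ summands contains the factor $f(x_i,\ldots,x_{i+m-1})/(x_i-x_{i+m-1}) = x_i\cdots x_{i+m-1}\,r(x_i,\ldots,x_{i+m-1})$, and similar simplification of the $g$-terms reveals a common prefactor $x_1\cdots x_{m+n-1}(x_1-x_{m+n-1})$ in every summand. Dividing through, and then applying cyclic invariance of $q$ to rewrite $q(x_1,\ldots,x_{i-1},x_i,x_{i+m},\ldots,x_{m+n-1})$ as $q(x_{i+m},\ldots,x_{m+n-1},x_1,\ldots,x_i)$, yields exactly the $i=1,\ldots,n$ ``non-wrapping'' terms of the target formula.

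Next I would expand $g\circ f$. Here most summands again factor cleanly, but those in which the inserted $f$-block contains $x_1$ but not $x_{m+n-1}$ (or vice versa) produce ``defective'' prefactors of the form $(x_1-x_j)$ or $(x_j-x_{m+n-1})$ instead of $(x_1-x_{m+n-1})$. These pair with corresponding defective terms arising in $f\circ g$, and via the telescoping identity $(x_a-x_b)+(x_b-x_c)=(x_a-x_c)$ produce the required factor $(x_1-x_{m+n-1})$. A further application of cyclic invariance of $r$, rewriting $r(x_1,\ldots,x_{j-1},x_{j+n-1},\ldots,x_{m+n-1})$ as $r(x_{j+n-1},\ldots,x_{m+n-1},x_1,\ldots,x_{j-1})$, then identifies the residual contributions with the ``wrapping'' terms $i=n+1,\ldots,m+n-1$ of the target.

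The hardest part will be the combinatorial bookkeeping of this telescoping cancellation: I must verify that the defective summands in $f\circ g$ and $g\circ f$ pair up exactly, and that the residual contribution reproduces the desired cyclic $r\cdot[q-q]$ structure on the wrapping positions. I would first confirm the pattern in a small case (for instance $m=3$, $n=2$, where the entire calculation can be done by hand and the cancellations are transparent), then formalise the general argument by classifying the defective $g\circ f$ summands according to whether the inserted $f$-block begins at $x_1$, ends at $x_{m+n-1}$, or neither.
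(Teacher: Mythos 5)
Your proposal is correct and is essentially the approach the paper leaves implicit: the paper states this corollary without proof, but your computation — expanding $f\circ g-g\circ f$ via Theorem \ref{Iharanice}, telescoping the boundary ("defective") factors $(x_1-x_j)+(x_j-x_{m+n-1})=(x_1-x_{m+n-1})$, and then invoking Corollary \ref{cyclicinvariance} to put everything in cyclic form — is exactly the direct generalisation of the divisibility argument the paper carries out explicitly in Lemma \ref{reducable} and Proposition \ref{reducedihara}. The only loose ends are routine: the final regrouping of the $-g\circ f$ terms (an Abel-summation step turning $\sum_j q_j(r_{j+n-1}-r_{j+n})$ into the claimed $\sum_i r_i(q_{i+m}-q_{i+m-1})$ form) and the overall prefactor $(-1)^{(m+1)(n+1)}$, which the paper itself drops between Theorem \ref{Iharanice} and its final displayed formula.
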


Thus, we have an action of the dihedral group on $\rbg$, restricting to either the trivial or sign representation on the block-graded parts. 

\section{A differential relation}

We additionally obtain a differential relation, generalising the differential relation defining the generators of $\bg$.

\begin{defn}\label{differentialdefinition}
For $n\geq 2$, define the differential operator
$$\D_n:\Q[x_1,\ldots,x_n] \to \Q[x_1,\ldots,x_n]$$ by
$$\D_n := \prod_{i_1,\ldots,i_{n-1}\in\{0,1\}}\left(\partialx{1}+(-1)^{i_1}\partialx{2}+\cdots +(-1)^{i_{n-1}}\partialx{n}\right).$$
\end{defn}

\begin{thm}\label{differential}
For all $r(x_1,\ldots,x_n)\in\mathfrak{rbg}$,
$$\D_n r(x_1,\ldots,x_n)=0.$$
\end{thm}

\begin{proof}
We induct on $n$. For $n=2$, this follows from Corollary \ref{newgendef}. Suppose this holds for $q(x_1,\ldots,x_{n})\in\mathfrak{rbg}$.

Next define 
$$I_n:=\left\{M\in M_n(\mu_2) \ \mid\ M_{i,i}=1,\ \frac{M_{i+1,j}}{M_{i,j}}=\frac{M_{i+1,j+1}}{M_{i,j+1}}\right\},$$
and 
$$L_M:= \sum_{i=1}^n M_{1,i}\partialx{i} = \pm \sum_{i=1}^n M_{j,i}\partialx{i}.$$

Note that $\D_n = \prod_{M\in I_n}L_M$, and thus we have, for $r(x_1,x_2), q(x_1,\ldots,x_{n})\in\mathfrak{rbg}$,
\begin{equation*}
\begin{split}
\D_{n+1}\{r,q\}&(x_1,\ldots,x_{n+1}) =\\
 &\sum_{i=1}^n \D_{n+1}(r(x_i,x_{i+1})q(x_i,x_{i+2},\ldots,x_{i+n})-r(x_i,x_{i+1})q(x_{i+1},x_{i+2},\ldots,x_{i+n})\\
={}&\sum_{i=1}^n\sum_{S\subset I_{n+1}}\left(\prod_{M\in S}L_M \right)r(x_i,x_{i+1})\left(\prod_{M\in I_{n+1}\setminus S}L_M\right)q(x_i,x_{i+2},\ldots,x_{i+n})\\
&-\sum_{i=1}^n\sum_{S\subset I_{n+1}}\left(\prod_{M\in S}L_M\right) r(x_i,x_{i+1})\left(\prod_{M\in I_{n+1}\setminus S}L_M\right)q(x_{i+1},x_{i+2},\ldots,x_{i+n})
\end{split}
\end{equation*}
where we have used the cyclic invariance of $\mathfrak{rbg}$ and considering indices modulo $n+1$.

Next denote by $M[i_1,\ldots,i_k]$ the submatrix of $M$ obtained by restricting to rows and columns $i_1,\ldots,i_k$. We see that $L_M f(x_{i_1},\ldots,x_{i_k}) = L_{M[i_1,\ldots,i_k]}f(x_{i_1},\ldots,x_{i_k})$. 

Now, if $\{M[i,i+1]\ \mid\ M\in S\}=I_2$, then $(\prod_{M\in S}L_M )r(x_i,x_{i+1})=0$. Otherwise, we must have $M[i,i+1]=\bigl( \begin{smallmatrix}1 & 1\\ 1 & 1\end{smallmatrix}\bigr)$ for all $M\in S$, or $M[i,i+1]=\bigl( \begin{smallmatrix}1 & -1\\ -1 & 1\end{smallmatrix}\bigr)$ for all $M\in S$. In the first case, we must have all $M\in I_{n+1}$ with $M[i,i+1]=\bigl( \begin{smallmatrix}1 & -1\\ -1 & 1\end{smallmatrix}\bigr)$ contained in $I_{n+1}\setminus S$. The second case is similar. In either case, this implies that
$$\{M[i,i+2,\ldots,i+n]\ \mid\ M\in I_{n+1}\setminus S\}=\{M[i+1,\ldots,i+n]\ \mid\ I_{n+1}\setminus S\}=I_n$$
and so 
$$\left(\prod_{M\in I_{n+1}\setminus S}L_M\right)q(x_{i},x_{i+2},\ldots,x_{i+n})=\left(\prod_{M\in I_{n+1}\setminus S}L_M\right)q(x_{i+1},x_{i+2},\ldots,x_{i+n})=0$$
Thus $D_{n+1}\{r,q\}$=0.
\end{proof}

\begin{remark}
Note that, in sufficiently high degree, $r(x_1,\ldots,x_n)\in\ker \D_n$ is equivalent to $r(x_1,\ldots,x_n)\in\sum_{M\in I_n}\ker L_M$. This second condition clearly holds for $n=2$, and can easily be shown to be preserved by the Ihara bracket. Hence, we can equivalently state Theorem \ref{differential} as the following:
$$r(x_1,\ldots,x_n)\in\sum_{M\in I_n}\ker L_M\text{ for all }r(x_1,\ldots,x_n)\in\rbg.$$
\end{remark}

\begin{remark}\label{wildmassguessing}
We have shown that, in block degree 1, $\bg$ is isomorphic as a vector space to the bigraded vector space of homogeneous polynomials satisfying Theorem \ref{blockshuffle}, Example \ref{singleshuffexplicit}, Lemma \ref{reducable}, and whose reduced forms satisfy Corollary \ref{cyclicinvariance} and Theorem \ref{differential}. Note also that, as all these properties are preserved by the Ihara bracket, $\bg$ is a Lie subalgebra of the Lie algebra of homogeneous polynomials satisfying these properties. However, in block degree $b$, and weight $w$, we can only show that the dimension of the bigraded piece of the vector space of homogeneous polynomials satisfying these constraints is bounded above by $Cw^{b-1}$ for some constant $C$.
\end{remark}

\section{Deriving the Ihara action formula}
For elements of the double shuffle Lie algebra, the (linearised) Ihara action is given by the following \cite{zeta3}:

\begin{prop}
For $\sigma\in\Lie[e_0,e_1]$, $u\in\{e_0,e_1\}^\times$, the linearised Ihara action is given recursively by
\begin{equation}\label{ihararecursive}
\sigma\circ e_0^ne_1u := e_0^n\sigma e_1u - e_0^ne_1 \sigma^* u + e_0^n e_1(\sigma\circ u)
\end{equation}
where $(a_1\cdots a_n)^*:=(-1)^na_n\cdots a_1$.
\end{prop}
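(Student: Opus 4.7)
The plan is to view the linearised Ihara action, in its depth-signed incarnation, as a derivation of $\qpoly$ and to read the recursion off from the Leibniz rule.

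First, I would recall that, by the construction of Racinet and Brown, the linearised Ihara action by a Lie element $\sigma\in\text{Lie}[e_0,e_1]$ coincides with the action of a unique derivation $D_\sigma$ of the tensor algebra $\qpoly$, i.e.\ $\sigma\circ u = D_\sigma(u)$ on words. In the standard (non depth-signed) setup this derivation is fixed on generators by $D_\sigma(e_0)=0$ and $D_\sigma(e_1)=[\sigma,e_1]=\sigma e_1 - e_1\sigma$. Transporting this through the involution $e_1\mapsto -e_1$ that relates Brown's convention to the one in force throughout this paper introduces, on a monomial $a_1\cdots a_n$, a sign that converts the right multiplication $e_1\sigma$ into $e_1\sigma^*$, where $\sigma^*(a_1\cdots a_n)=(-1)^n a_n\cdots a_1$. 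Equivalently, one checks directly that in the depth-signed convention the correct values on generators are
\begin{equation*}
D_\sigma(e_0)=0,\qquad D_\sigma(e_1)=\sigma e_1 - e_1\sigma^*.
\end{equation*}

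Given this, the recursion follows by a single application of the Leibniz rule. Since $D_\sigma$ annihilates $e_0$, it annihilates $e_0^n$, and so
\begin{align*}
\sigma\circ e_0^n e_1 u
&= D_\sigma(e_0^n e_1 u)\\
&= e_0^n D_\sigma(e_1) u + e_0^n e_1 D_\sigma(u)\\
&= e_0^n\sigma e_1 u - e_0^n e_1 \sigma^* u + e_0^n e_1(\sigma\circ u),
\end{align*}
which is the claimed identity.

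The main obstacle is not the derivation-theoretic calculation itself, but the sign bookkeeping in the identification of $D_\sigma(e_1)$: verifying that the signed anti-involution $\sigma^*$, rather than $\sigma$ itself, is what appears on the right of $e_1$ in the depth-signed convention. Once this sign accounting is done by carefully tracking the contributions of $e_1\mapsto -e_1$, or, alternatively, by checking the formula directly against the combinatorial definition of the Ihara action in low weight and invoking uniqueness of the derivation extending its values on generators, the rest of the proof is automatic.
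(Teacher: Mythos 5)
First, note that the paper does not actually prove this proposition: it is quoted from Brown \cite{zeta3} as a known description of the linearised Ihara action, so there is no in-paper argument to compare yours against. Judged on its own terms, your proof has the right shape --- the recursion \eqref{ihararecursive} is indeed nothing more than the Leibniz rule for the derivation $D_\sigma$ of $\qpoly$ determined by $D_\sigma(e_0)=0$ and $D_\sigma(e_1)=\sigma e_1-e_1\sigma^*$ --- but essentially all of the content of the proposition is concentrated in the identification of the linearised Ihara action with that derivation, and that identification is asserted rather than proved. As written, the argument is close to circular: the recursion and the statement that ``$\sigma\circ(-)$ is the derivation taking these values on generators'' are the same assertion.

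Second, the justification you offer for the appearance of $\sigma^*$ does not hold up. Conjugating the derivation $e_1\mapsto[\sigma,e_1]$ by the algebra automorphism $e_1\mapsto -e_1$ yields another derivation of commutator form, $e_1\mapsto[\sigma',e_1]$ with $\sigma'$ the image of $\sigma$; it does not convert the right factor $e_1\sigma$ into $e_1\sigma^*$. Moreover, the recursion is stated in Brown's unsigned convention --- the passage to the depth-signed convention happens only later, via Lemma \ref{depthsigned}, in deriving Theorem \ref{Iharanice} from Theorem \ref{uglyIhara} --- so no convention translation should enter at this stage at all. The reversal $\sigma^*$ is intrinsic to the Ihara action: it is the antipode of the concatenation Hopf algebra, arising from the inverse in $a e_1 a^{-1}$ when one linearises $b\mapsto b(e_0,ae_1a^{-1})\cdot a$ in $a$ (for $\sigma$ primitive one simply has $\sigma^*=-\sigma$, so the inserted term is a (anti)commutator and no sign gymnastics are needed). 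A correct proof must start from that definition, or from duality with the motivic coaction recalled in Proposition \ref{blockfullmotivic}, and must also account for the right-concatenation term $b\mapsto b\cdot a$, which is not part of any derivation and which determines the implicit base case $\sigma\circ e_0^n$ of the recursion --- a point your argument leaves unaddressed.
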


Translating the linearised Ihara action into the language of commutative variables, we find the following.

\begin{thm}\label{uglyIhara}
Let $f(x_1,\ldots,x_m)$ be the image of the block degree $m-1$ part of $\sigma\in\Lie[e_0,e_1]$, and $g\in\Q[x_1,\ldots,x_n]$. Then the linearised Ihara action is given by
\begin{equation*}
\begin{split}
(f\circ g)&(x_1,\ldots,x_{m+n-1}) =\\
&\sum_{i=1}^{n}(-1)^{(m+1)(i-1)}\frac{f(x_i,x_{i+1},\ldots,x_{i+m-1})}{x_i^2-x_{i+m-1}^2}\\
&\times\left(\left(1 +(-1)^{m+1} \frac{x_{i+m-1}}{x_i}\right)g(\bar x_1,\ldots, \bar x_i,x_{i+m},\ldots,x_{m+n-1})\right.\\
&\quad \left.-\left(1+(-1)^{m+1}\frac{x_i}{x_{i+m-1}}\right)g(\bar x_1,\ldots, \bar x_{i-1},x_{i+m-1},\ldots,x_{m+n-1})\right)
\end{split}
\end{equation*}
where we define $\bar x_i := (-1)^{m+1}x_i$.
\end{thm}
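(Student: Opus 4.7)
The plan is to translate the recursive Ihara formula (\ref{ihararecursive}) for $\sigma\circ e_0^a e_1 u$ directly through the block-polynomial bijection $\pi_\text{bl}$ of (\ref{isomorph}), and then induct on $n$, the number of variables of $g$, to verify that the closed-form sum satisfies the same recursion. Each term indexed by $i$ in the claimed formula will correspond to an insertion of the $m$-variable polynomial $f$ at the block boundary between $x_{i-1}$ and $x_i$ of the word encoded by $g$, with the two summands in the big parenthesis recording whether the first and last letters of $\sigma$ merge with the surrounding block or split it.

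First I would pin down the dictionary. If $\text{bl}(e_0 u e_1)=(0;l_1,\ldots,l_n)$ encodes $u$ as $g(x_1,\ldots,x_n)=x_1^{l_1}\cdots x_n^{l_n}$, then the three terms $e_0^a\sigma e_1 u$, $-e_0^a e_1 \sigma^* u$, and $e_0^a e_1(\sigma\circ u)$ of the recursion have direct polynomial analogues. The first inserts $\sigma$ at the left boundary, fusing with the leading block, and produces the $i=1$ summand of the claimed formula; the denominator $\frac{1}{x_i^2-x_{i+m-1}^2}$ together with the correction terms $\bigl(1+(-1)^{m+1}x_{i+m-1}/x_i\bigr)$ and $\bigl(1+(-1)^{m+1}x_i/x_{i+m-1}\bigr)$ precisely record the two parities of how the extremal letters of $\sigma$ meet the ambient block structure. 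The second term involves $\sigma^*$, whose polynomial image is $(-1)^{|\sigma|}f(x_m,\ldots,x_1)$, and after applying $\pi_\text{bl}$ to $\sigma^* u$ contributes the next piece. The third term is a recursive call to $\sigma\circ u$ on a shorter word, which by induction supplies the remaining summands for $i\geq 2$.

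The base case $n=1$ is immediate: $g(x_1)=x_1^{l_1}$ represents a single-block word, the recursion terminates after one step, and the $i=1$ summand of the formula is verified by expanding $f(x_1,x_2)/(x_1^2-x_2^2)$ and interpreting each of the boundary-correction factors as the merge/split distinction. For the inductive step, the key combinatorial identity to check is that descending past a single block in the recursion is equivalent to composing a $\sigma^*$-reversal with a block rotation, which in polynomial terms is precisely the substitution $x_j \mapsto \bar x_j = (-1)^{m+1}x_j$ for indices $j<i$, with a compensating sign $(-1)^{m+1}$ that accumulates across the $i-1$ descents into the prefactor $(-1)^{(m+1)(i-1)}$.

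The main obstacle will be this sign bookkeeping: verifying that each iteration of the recursion contributes exactly one factor of $(-1)^{m+1}$ from the $\sigma^*$-reversal, and that these factors accumulate correctly both in the scalar prefactor and in the $\bar x_j$-substitutions appearing in the arguments of $g$. For Lie polynomials of block degree $m-1$, the weight $|\sigma|$ has the same parity as $m+1$, so the sign $(-1)^{|\sigma|}$ arising from $\sigma^*$ matches the sign $(-1)^{m+1}$ needed for the reflected polynomial, and this is precisely what makes the unified $\bar x_j$ substitution work. Once this parity accounting is laid out cleanly, the combinatorial matching between the three recursion terms and the three classes of contributions to the $i$-th summand (from the current insertion, the $\sigma^*$-shifted insertion, and the inductive tail) reduces to a straightforward unfolding.
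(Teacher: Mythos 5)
Your overall strategy is the one the paper uses: push the recursion (\ref{ihararecursive}) through the block encoding $\pi_{\text{bl}}$, induct on the number of blocks of $u$ (equivalently the number of variables of $g$), and control the signs via the parity fact that a nonzero Lie element of block degree $m-1$ has weight congruent to $m+1$ modulo $2$ (which the paper extracts from Lemma \ref{blockparity} in its final sign-fixing step). That parity observation in your last paragraph is correct and is indeed the crux of why the single substitution $\bar x_j=(-1)^{m+1}x_j$ works.

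However, two concrete points in your plan would fail as written. First, you never justify that the sum decomposes block by block at all: the recursion a priori inserts $\sigma$ at every position of $u$, and the paper's first step is to show that the coefficient $\epsilon_i$ of an insertion at a repeated letter vanishes --- automatically for $e_0e_0$ from the shape of (\ref{ihararecursive}), and for $e_1e_1$ only after a cancellation using $\sigma+\sigma^*=0$ for Lie elements. Without this, the $i$-indexed summands of the claimed formula do not correspond to anything. Second, your description of the base case and of the $i=1$ summand is wrong in a way that matters: for a single block $u=(e_1e_0)^k$ the recursion does \emph{not} terminate after one step, and the first term $e_0^a\sigma e_1u$ alone does not produce the $i=1$ summand. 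Rather, one must iterate the recursion through all $2k+1$ alternating positions inside the block and sum the resulting geometric series $\sum_i (x_1/x_m)^{2i}$ and $\sum_i(x_1/x_m)^{2i+1}$; this is precisely where the denominator $x_i^2-x_{i+m-1}^2$ and the two correction factors $\bigl(1+(-1)^{m+1}x_{i+m-1}/x_i\bigr)$, $\bigl(1+(-1)^{m+1}x_i/x_{i+m-1}\bigr)$ come from. Relatedly, your inductive step (peeling off one recursion term to get the $i\geq 2$ summands) does not align with the recursion, which removes one $e_1$ at a time rather than one block at a time; the paper instead verifies the one-block and two-block cases by explicit summation and argues that appending a block only alters the last summands. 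Until the geometric-series aggregation within each block is carried out, the ``straightforward unfolding'' you invoke is not available.
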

\begin{proof}
We start by writing, for $u=u_1\cdots u_n\in\{e_0,e_1\}^\times$,
$$\sigma\circ u_1\cdots u_n = \epsilon_0 \sigma u_1\cdots u_n + \sum_{i=1}^n \epsilon_i u_1\cdots u_i \sigma u_{i+1}\cdots u_n$$
where $\epsilon_i\in\{0,\pm 1\}$ for each $i$. We first claim that $\epsilon_i=0$ if $u_i=u_{i+1}$. We take here $u_0=e_0$ and $u_{n+1}=e_1$.

If $u_i=u_{i+1}=e_0$, then our recursive formula (\ref{ihararecursive}) shows $\epsilon_i=0$, as $\sigma$ does not `insert' between adjacent $e_0$. If $u_i=u_{i+1}=e_1$, then our recursion gives us terms of the form
\begin{equation*}
\begin{split}
\cdots &+ u_1\cdots u_{i-1} e_1 \sigma^* e_{1} u_{i+2}\cdots u_{n} + u_1\cdots u_{i-1}e_1 \sigma e_1 u_{i+2}\cdots u_n\\
&+ u_1\cdots u_{i-1} e_1e_1 \sigma^* u_{i+2}\cdots u_n + \cdots
\end{split}
\end{equation*}
As, for $\sigma\in\Lie[e_0,e_1]$, $\sigma+\sigma^*=0$, the terms corresponding to $u_1\cdots u_i \sigma  u_{i+1}\cdots u_n$ cancel, giving us that $\epsilon_i=0$. Hence, our block-polynomial formula will consist of a sum over the blocks of $u$, each corresponding to the insertion of $\sigma$ into a single block.

We will induct on the number of blocks in $u$. If $u$ consists of a single block, $u=(e_1e_0)^k$, and 
\begin{equation*}
\begin{split}
\sigma\circ u &= \sigma (e_1e_0)^k + e_1\sigma^* e_0 (e_1e_0)^{k-1} + e_1e_0\sigma (e_1e_0)^{k-2} +e_1e_0e_1\sigma^*e_0(e_1e_0)^{k-3}+\cdots\\
&=\sum_{i=0}^k [(e_1e_0)^i\sigma (e_1e_0)^{k-i} + (e_1e_0)^i e_1 \sigma^* e_0 (e_1e_0)^{k-1-i}].
\end{split}
\end{equation*}

Letting $f(x_1,\ldots,x_m)$ be the polynomial representing the block degree $n$ part of $\sigma$ and $g(x_1)=x_1^{2k+2}$ be the polynomial representing $u$, this is equivalent to the statement that
\begin{equation*}
\begin{split}
(f\circ g)&(x_1,\ldots,x_m)= \\
&\sum_{i=0}^k \left(\frac{x_1}{x_m}\right)^{2i}\frac{f(x_1,\ldots,x_m)g(x_m)}{x_m^2} + (-1)^{m+1}\frac{x_1}{x_m}\sum_{i=0}^{k-1}\left(\frac{x_1}{x_m}\right)^{2i}\frac{f(x_1,\ldots,x_m)g(m)}{x_m^2}\\
={}&\frac{f(x_1,\ldots,x_m)}{x_1^2-x_m^2}\left(\left(\frac{x_1}{x_m}\right)^{2k+2} - 1 +(-1)^{m+1} \left(\frac{x_1}{x_m}\right)^{2k+1} - (-1)^{m+1}\frac{x_1}{x_m}\right)g(x_m)\\
={}&\frac{f(x_1,\ldots,x_m)}{x_1^2-x_m^2}\left(g(x_1)-g(x_m)+(-1)^{m+1}\frac{x_m}{x_1}g(x_1)-(-1)^{m+1}\frac{x_1}{x_m}g(x_m)\right),
\end{split}
\end{equation*}
which is precisely the result given by the formula.

Now suppose our formula is correct for words consisting of $n-1$ blocks, and let $e_0ue_1$ be a word consisting of $n$ blocks, i.e. $e_0ue_1=b_1\cdots b_n$, represented by the monomial $g(x_1,\ldots,x_n)$. As we have merely appended a block onto the end of a word, the first $n-2$ terms of $(f\circ g)$ will be given by our formula, by our induction hypothesis. To see this, consider $(f\circ g_{\mathrm{alt}})$, where $g_{\mathrm{alt}}$ is the polynomial  corresponding to the word  $e_0u_{\mathrm{alt}}e_1=b_1\cdots b_{n-1}^\prime$. Here $b_{n-1}^\prime$ is the smallest block extending $b_{n-1}$ and ending on $e_1$. The Ihara action of any $\sigma\in\Lie[e_0,e_1]$ on $u$ and $u_{\mathrm{alt}}$ will produce terms that are identical upon swapping $b_{n-1}b_n \leftrightarrow b_{n-1}^\prime$ up to those terms in which $\sigma$ inserts into $b_{n-1}b_n$. Indeed, they will agree under this swapping until we consider terms in which $\sigma$ inserts beyond the end of $b_{n-1}$. Thus, it suffices to show that the formula holds for a word $e_0ue_1=b_1b_2$ of block degree 1.

We have 2 cases: the repeated letter in $e_0ue_1$ is $e_0$, or it is $e_1$.
In the first case, $u=(e_1e_0)^ke_0(e_1e_0)^l$ and
\begin{equation*}
\begin{split}
\sigma\circ u &=  \sum_{i=0}^k (e_1e_0)^i\sigma (e_1e_0)^{k-i}e_0(e_1e_0)^l + \sum_{i=0}^{k-1}(e_1e_0)^ie_1\sigma^* e_0(e_1e_0)^{k-1-i}e_0(e_1e_0)^l\\
&+\sum_{i=0}^l(e_1e_0)^ke_0(e_1e_0)^i\sigma (e_1e_0)^{l-i} + \sum_{i=0}^{l-1}(e_1e_0)^ke_0(e_1e_0)^ie_1\sigma^*e_0(e_1e_0)^{l-1-i}
\end{split}
\end{equation*}
In terms of commutative polynomials, after summing the geometric series, we obtain
\begin{equation*}
\begin{split}
(f\circ g)(x_1,\ldots,x_{m+1}) ={}&\frac{f(x_1,\ldots,x_m)}{x_1^2-x_m^2}\left(\left(\frac{x_1}{x_m}\right)^{2k} - 1 \right.\\
&\left.+ (-1)^{m+1}\left(\frac{x_1}{x_m}\right)^{2k+1}-(-1)^{m+1}\frac{x_1}{x_m}\right)g(x_m,x_{m+1})\\
&+\frac{f(x_2,\ldots,x_{m+1})}{x_2^2-x_{m+1}^2}\left(\left(\frac{x_2}{x_{m+1}}\right)^{2l+2} - 1\right.\\
&\left.+ (-1)^{m+1}\left(\frac{x_2}{x_{m+1}}\right)^{2l+1}-(-1)^{m+1}\frac{x_2}{x_{m+1}}\right)g(x_1,x_{m+1}).
\end{split}
\end{equation*}
Simplifying, and noting that $g(x_1,x_2)= x_1^{2k+1}x_2^{2l+2}$, we obtain
\begin{equation*}
\begin{split}
(f\circ g)(x_1,\ldots,x_{m+1})&= \frac{f(x_1,\ldots,x_m)}{x_1^2-x_m^2}\left(\frac{x_m}{x_1}g(x_1,x_m+1) -g(x_m,x_m+1)\right.\\
&\left.+(-1)^{m+1}g(x_1,x_{m+1})-(-1)^{m+1}\frac{x_1}{x_m}g(x_m,x_{m+1})\right)\\
&+\frac{f(x_2,\ldots,x_{m+1})}{x_2^2-x_{m+1}^2}\bigg(g(x_1,x_2)-g(x_1,x_{m+1})\\
&\left.+(-1)^{m+1}\frac{x_{m+1}}{x_2}g(x_1,x_2) - (-1)^{m+1}\frac{x_2}{x_{m+1}}g(x_1,x_{m+1})\right).
\end{split}
\end{equation*}
Considering parity, and defining $\bar x_i:=(-1)^{m+1}x_i$, we can rewrite this as
\begin{equation*}
\begin{split}
(f\circ g)(x_1,\ldots,x_{m+1})=& (-1)^{(0)(m+1)}\frac{f(x_1,\ldots,x_m)}{x_1^2-x_m^2}\\
&\times\left(g(\bar x_1,x_{m+1})+(-1)^{m+1}\frac{x_m}{x_1}g(\bar x_1,x_{m+1})\right.\\
&\left. - g(x_m,x_{m+1}) + (-1)^{m+1}\frac{x_m}{x_{m+1}}g(x_m,x_{m+1})\right)\\
&+(-1)^{m+1}\frac{f(x_2,\ldots,x_{m+1})}{x_2^2-x_{m+1}^2}\\
&\times\left(g(\bar x_1,\bar x_2) - (-1)^{m+1}\frac{x_{m+1}}{x_2}g(\bar x_1,\bar x_2)\right.\\
&\left. -g(\bar x_1,x_{m+1})+(-1)^{m+1}\frac{x_2}{x_{m+1}}g(\bar x_1,x_{m+1})\right)
\end{split}
\end{equation*} giving the desired formula. The second case follows similarly.

Hence, our general formula is $(f\circ g)(x_1,\ldots,x_{m+n-1}) =A\pm B\pm C$, where
\begin{equation*}
\begin{split}
A ={}& \sum_{i=1}^{n-2}(-1)^{(m+1)(i-1)}\frac{f(x_i,x_{i+1},\ldots,x_{i+m-1})}{x_i^2-x_{i+m-1}^2}\\
&\times\left(\left(1 +(-1)^{m+1} \frac{x_{i+m-1}}{x_i}\right)g(\bar x_1,\ldots, \bar x_i,x_{i+m},\ldots,x_{m+n-1})\right.\\
&\left.-\left(1+(-1)^{m+1}\frac{x_i}{x_{i+m-1}}\right)g(\bar x_1,\ldots, \bar x_{i-1},x_{i+m-1},\ldots,x_{m+n-1})\right),\\
B ={}&(-1)^{(m+1)(n-2)}\frac{f(x_{n-1},\ldots,x_{m+n-2})}{x_{n-1}^2-x_{n+m-2}^2}\\
&\times\left(\left(1 +(-1)^{m+1} \frac{x_{n+m-2}}{x_{n-1}}\right)g(\bar x_1,\ldots, \bar x_{n-1},x_{m+n-1})\right.\\
&\left.-\left(1+(-1)^{m+1}\frac{x_{n-1}}{x_{n+m-2}}\right)g(\bar x_1,\ldots, \bar x_{n-2},x_{n+m-2},x_{m+n-1})\right),\\
C ={}&(-1)^{(m+1)(n-1)}\frac{f(x_{n},\ldots,x_{m+n-1})}{x_{n}^2-x_{m+n-1}^"}\\
&\times\left(\left(1 +(-1)^{m+1} \frac{x_{n+m-1}}{x_{n}}\right)g(\bar x_1,\ldots, \bar x_{n})\right.\\
&\left.-\left(1+(-1)^{m+1}\frac{x_{n}}{x_{n+m-1}}\right)g(\bar x_1,\ldots, \bar x_{n-1},x_{m+n-1})\right),\\
\end{split}
\end{equation*}
 and the signs of $B$ and $C$ agree. To fix this sign, we need only to consider the sign of the term corresponding to $(\frac{x_{n-1}}{x_{m+n-2}})^2\frac{f(x_{n-1},\ldots,x_{m+n-2})g(x_1,\ldots,x_{n-2},x_{m+n-2},x_{m+n-1})}{x_{m+n-2}^2}$. This corresponds to inserting $\sigma$ after the first two letters of the $(n-1)$\textsuperscript{th} block. The sign will be positive if this block starts with an $e_1$ and must have the same sign as $(-1)^{m+1}$ otherwise. Let $g(x_1,\ldots,x_n)=x_1^{d_1}\ldots x_n^{d_n}$. Then by Lemma \ref{blockparity} the $(n-1)$\textsuperscript{th} block starts with $e_0$ if $d_1+\cdots+d_{n-2}\equiv n-2 \mod2$, and $e_1$ otherwise. Thus, the sign of the term corresponding to $(\frac{x_{n-1}}{x_{m+n-2}})^2\frac{f(x_{n-1},\ldots,x_{m+n-2})g(x_1,\ldots,x_{n-2},x_{m+n-2},x_{m+n-1})}{x_{m+n-2}^2}$ is $(-1)^{(m-1)(1 + d_1+\cdots + d_{n-2} - n +2)}$. Comparing this with our formula, we see that the final two terms must appear with a positive sign, giving the desired result.

\end{proof}

To obtain (\ref{prettyIhara}), we must translate this across into the `depth-signed' convention. Specifically, we must find the action of the map $e_1\mapsto -e_1$ in terms of commutative variables.

\begin{lem}\label{depthsigned}
The automorphism $\qpoly\to\qpoly$ given by $e_1\mapsto -e_1$, is equivalent under the isomorphism (\ref{isomorph}) to the map
\begin{equation}\label{depthsigning}
\begin{split}
\Q[x_1,\ldots,x_n]&\to\Q[x_1,\ldots,x_n]\\
f(x_1,\ldots,x_n)&\mapsto(-1)^{\left\lceil\frac{l}{2}\right\rceil}f(-x_1,x_2,\ldots,(-1)^nx_n)
\end{split}
\end{equation}
for $f$ a homogeneous polynomial of degree $l+2$.
\end{lem}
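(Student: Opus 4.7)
The plan is to reduce the lemma to a parity identity that can be verified by a direct position-by-position count. On the word side, $e_1 \mapsto -e_1$ multiplies each word $w \in \{e_0,e_1\}^\times$ by $(-1)^{d(w)}$, where $d(w)$ is the number of $e_1$'s in $w$. On the polynomial side, applying (\ref{depthsigning}) to $\pi_\text{bl}(w) = x_1^{l_1}\cdots x_n^{l_n}$ (a monomial of total degree $l+2$) scales it by $(-1)^{\lceil l/2\rceil + \sum_{j=1}^n j\, l_j}$, and this simplifies to $(-1)^{\lceil l/2\rceil + \sum_{j \text{ odd}} l_j}$ since $j l_j \equiv l_j \pmod 2$ exactly when $j$ is odd. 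The lemma therefore reduces to the congruence
\[
d(w) \equiv \lceil l/2\rceil + \sum_{j \text{ odd}} l_j \pmod 2,
\]
where $(l_1,\ldots,l_n)$ are the block lengths of $e_0 w e_1$ and $L := l+2$.

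The key step is to pin down the letter at each global position $m \in \{1,\ldots,L\}$ of $e_0 w e_1$. Let $j(m)$ be the block containing position $m$, and let $\epsilon_j \in \{0,1\}$ indicate whether block $j$ starts with $e_1$. Within-block alternation shows the letter at position $m$ is $e_{\epsilon_{j(m)} + (m - L_{j(m)-1}) + 1 \bmod 2}$, where $L_{j-1} := l_1 + \cdots + l_{j-1}$. The boundary condition $\epsilon_1 = 0$ combined with the block-junction rule (last letter of $b_j$ equals first letter of $b_{j+1}$) propagates to $\epsilon_j \equiv (j-1) + L_{j-1} \pmod 2$, and substitution collapses the formula to the much simpler
\[
\text{letter at position } m = e_{(j(m) + m) \bmod 2}.
\]

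Summing over $m$ then yields
\[
d(e_0 w e_1) \equiv \sum_{m=1}^L \bigl(j(m) + m\bigr) \equiv \sum_{j=1}^n j\, l_j + \tfrac{L(L+1)}{2} \equiv \sum_{j \text{ odd}} l_j + \tfrac{L(L+1)}{2} \pmod 2,
\]
and since $d(w) = d(e_0 w e_1) - 1$, the lemma reduces to verifying the arithmetic identity $\lceil l/2 \rceil \equiv \tfrac{L(L+1)}{2} + 1 \pmod 2$ with $L = l+2$, which is immediate after splitting on the parity of $l$. The main obstacle is establishing the clean position formula $f(m) = (j(m)+m) \bmod 2$; once the recursion for $\epsilon_j$ and the within-block alternation are aligned correctly, everything else is a short mod-$2$ computation.
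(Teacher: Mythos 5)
Your proof is correct, but it takes a genuinely different route from the paper's. Both arguments reduce the lemma to the same congruence $d(w)\equiv\lceil l/2\rceil+\sum_{j\ \mathrm{odd}}l_j\pmod 2$ (the paper writes the block lengths as $d_i$), but the paper then proceeds by induction on the number of blocks of $e_0we_1$: a single-block base case $(e_0e_1)^{l/2+1}$ followed by an inductive step that appends one block and splits into parity cases on the length of the prefix. You instead prove the congruence in closed form: the junction rule forces $\epsilon_{j+1}\equiv\epsilon_j+l_j+1\pmod 2$, which collapses the letter at global position $m$ to $e_{(j(m)+m)\bmod 2}$; the depth of $e_0we_1$ is then $\sum_{m}(j(m)+m)\equiv\sum_j jl_j+\tfrac{L(L+1)}{2}\pmod 2$, and the final check $\lceil l/2\rceil\equiv\tfrac{(l+2)(l+3)}{2}+1\pmod 2$ is immediate. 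I verified the position formula (e.g.\ on $e_0e_1e_0e_0e_1e_0e_1e_1$ with block lengths $3,4,1$), the reduction $\sum_j jl_j\equiv\sum_{j\ \mathrm{odd}}l_j$, and the closing arithmetic in both parities of $l$; everything is sound, including the shift $d(w)=d(e_0we_1)-1$. Your version buys a reusable structural fact---the letter at position $m$ is determined by the parity of $m$ plus its block index---and avoids the bookkeeping in the paper's inductive step, where the prefix must be recast in the form $e_0ue_1$ and the cases $l'$ even/odd handled separately; the paper's induction is shorter to state but hides that case analysis.
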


\begin{proof}
Note that it suffices to show that, for a word $w$ of length $l$ and depth $d$, with $\pi_{\text{bl}}(w)=x_1^{d_1}\ldots x_n^{d_n}$, this congruence holds

$$d\equiv \left\lceil\frac{l}{2}\right\rceil + d_1 + d_3+\cdots\mod2.$$

We will induct on the number of blocks in $e_0we_1$. If $e_0we_1$ consists of a single block, then $e_0we_1 = (e_0e_1)^{\frac{l}{2}+1}$, and so $d= \frac{l}{2}$, and $d_1=l+2$. Thus the result holds.

Suppose the result holds for $w$ such that $e_0we_1$ consists of $n$ blocks. Let $e_0we_1=w^\prime w_{d_{n+1}}$ be a word of length $l+2$ and depth $d+1$,consisting of $n+1$ blocks, where $w_{d_{n+1}}$ is a single block of length $d_{n+1}$ and $w^\prime$ is a word of length $l^\prime$ and depth $d^\prime$. Suppose $\pi_{\text{bl}}(w)=x_1^{d_1}\ldots x_n^{d_n}x_{n+1}^{d_{n+1}}$.

If $l^\prime$ is even, then $w^\prime=e_0 u e_1$ consists of $n\equiv 1\mod2$ blocks, and $d_{n+1}$ must be odd. So, by induction,
$$d^\prime - 1 \equiv \frac{l^\prime-2}{2}+\sum_{1\leq 2i+1\leq n}d_{2i+1}\mod2.$$
Thus 
\begin{equation*}
\begin{split}
d &= d^\prime + \left\lceil\frac{d_{n+1}}{2}\right\rceil - 1\\
&\equiv  \frac{l^\prime-2}{2}+\sum_{1\leq 2i+1\leq n}d_{2i+1}+ \left\lceil\frac{d_{n+1}}{2}\right\rceil\mod2\\
&\equiv \left\lceil\frac{l^\prime + d_{n+1}-2}{2}\right\rceil +\sum_{1\leq 2i+1\leq n+1}d_{2i+1}\mod2\\
&\equiv \left\lceil\frac{l}{2}\right\rceil +\sum_{1\leq 2i+1\leq n+1}d_{2i+1}\mod2,
\end{split}
\end{equation*}
 and so the result holds. Similar considerations for $l^\prime$ odd prove the result in general.
\end{proof}

Applying this transformation, and simplifying, we obtain Proposition \ref{Iharanice}, giving the formula
\begin{equation*}
\begin{split}
(f\circ g)(x_1,\ldots,x_{m+n-1}) &= \sum_{i=1}^n \frac{f(x_i,\ldots,x_{i+m-1})}{x_i-x_{i+m-1}}\left (\frac{1}{x_i}g(x_1,\ldots,x_i,x_{i+m},\ldots,x_{m+n-1})\right.\\
&\left.\hspace{6em} -\frac{1}{x_{i+m-1}}g(x_1,\ldots,x_{i-1},x_{i+m-1},\ldots,x_{m+n-1})\right).
\end{split}
\end{equation*}

\section{On a conjecture of Charlton}
In \cite{charthesis}, Charlton proposes the following conjecture as a generalisation of one proposed in \cite{???}, and has verified them numerically in many cases.

\begin{conj}[Conjecture 8.2 \cite{charthesis}]
For any block decomposition $(l_1,\ldots,l_{2n+1})$ of even weight $N$
$$\sum_{\sigma\in\Sym(1,3,\ldots,2n+1)}\sgn(\sigma)\im(l_{\sigma(1)},l_2,l_{\sigma(3)},\ldots,l_{\sigma(2n+1)}) = 0$$
where we sum over all permutations of the odd indices.
\end{conj}

Using the block-graded machinery, we can prove a weaker version of this conjecture, modulo products and terms of lower block degree.
\begin{prop}\label{charlalt}
For any block decomposition $(l_1,\ldots,l_{2n+1})$ of even weight $N$
$$\sum_{\sigma\in\Sym(1,3,\ldots,2n+1)}\sgn(\sigma)\ibl(l_{\sigma(1)},l_2,l_{\sigma(3)},\ldots,l_{\sigma(2n+1)}) = 0$$
modulo products, where we sum over all permutations of the odd indices.
\end{prop}
\begin{proof}
First note that this is equivalent to the statement that
$$\sum_{\sigma\in\Sym(1,3,\ldots,2n+1)}\sgn(\sigma)f(x_{\sigma(1)},x_2,x_{\sigma(3)},\ldots,x_{\sigma(2n+1)}) = 0$$
for any $f(x_1,\ldots,x_{2n+1})\in\bg$. Writing $$f(x_1,\ldots,x_{2n+1})=x_1\ldots x_{2n+1}(x_1-x_{2n+1})r(x_1,\ldots,x_{2n+1}),$$ with $r(x_1,\ldots,x_{2n+1})\in\rbg$, we find that it is sufficient to show that
\begin{align*}
&\sum_{\substack{\sigma\in\Sym(1,3,\ldots,2n+1)\\ \sigma(1)=1}}\sgn(\sigma)r(x_{\sigma(1)},x_2,x_{\sigma(3)},\ldots,x_{\sigma(2n+1)}) \\
={}&\sum_{\substack{\sigma\in\Sym(1,3,\ldots,2n+1)\\ \sigma(2n+1)=1}}\sgn(\sigma)r(x_{\sigma(1)},x_2,x_{\sigma(3)},\ldots,x_{\sigma(2n+1)}).\end{align*}
Denote by $\mathcal{S}_{i,n}:=\{\sigma\in\Sym(1,3,\ldots,2n+1)\mid \sigma(1)=i\}$. We then proceed by induction. The case $n=1$ is simple:
$$r(x_1,x_2,x_3)=-r(x_3,x_2,x_1),$$
which follows from Lemma \ref{reflection}. Now suppose the claim holds for $k=1,\ldots,n$. It then sufficies to show that it holds for $\{r,q\}$ for $r(x_1,x_2,x_3),\ q(x_1,\ldots,x_{2n+1})\in\rbg$. From Corollary \ref{generalreducedihara}, we can write
\begin{align*}
\{r,q\}(x_1,\ldots,x_{2n+3})={}&\sum_{i=1}^{2n+3}r(x_i,x_{i+1},x_{i+2})(q(x_{i+3},\ldots,x_{2n+3},x_1,\ldots,x_i)\\
&-q(x_{i+2},\ldots,x_{2n+3},x_1,\ldots,x_{i-1})\\
={}&\sum_{\sigma\in\Cyc_{2n+3}}\sigma\cdot F(x_1,\ldots,x_{2n+3}) -\sigma\cdot (1\ 3)\cdot F(x_1,\ldots,x_{2n+3})
\end{align*}
where $F(x_1,\ldots,x_{2n+3}):=r(x_1,x_2,x_3)q(x_1,x_4,\ldots,x_{2n+3})$, and $\sigma\cdot F(x_1,\ldots,x_{2n+3}):=F(x_{\sigma(1)},\ldots,x_{\sigma(2n+3)})$ for any permutation $\sigma$. The claim is then that
\begin{align*}
&\sum_{\sigma\in \mathcal{S}_{1,n+1}}\sum_{\tau\in\Cyc_{n+1}}\sgn(\sigma)\sigma\cdot\tau\cdot (F -(1\ 3)\cdot F)\\
={}&\sum_{\sigma\in \mathcal{S}_{2n+3,n+1}}\sum_{\tau\in\Cyc_{n+1}}\sgn(\sigma)\sigma\cdot\tau\cdot (F -(1\ 3)\cdot F)
\end{align*}

We divide the sums up into a number of sub-sums, according to where the permutations send $\{1,2,3\}$. Let $\mathcal{R}_{a,b,c}:=\{\sigma\in\text{S}_{2n+3}\mid \{\sigma(1),\sigma(2),\sigma(3)\}=\{a,b,c\}\}$. Note that $\sigma\in\mathcal{R}_{a,b,c}\Rightarrow \sigma\cdot (1\ 3)\in\mathcal{R}_{a,b,c}$. Hence
\begin{align*}
&\sum_{\sigma\in\mathcal{S}_{i,n+1}}\sum_{\substack{\tau\in\Cyc_{2n+3}\\ \sigma\cdot\tau\in\mathcal{R}_{a,b,c}}}\sgn(\sigma)\sigma\cdot\tau\cdot(F-(1\ 3)\cdot F)=\\
r(a,b,c)&\sum_{\sigma\in\mathcal{S}_{i,n+1}}\sum_{\substack{\tau\in\Cyc_{2n+3}\\ \sigma\cdot\tau\in\mathcal{R}_{a,b,c}}}\sgn(\sigma_{a,b,c})\sgn(\sigma)\sigma\cdot\tau\cdot(q(x_1,x_4,\ldots,x_{2n+3})-q(x_3,\ldots,x_{2n+3}))
\end{align*}
where $\sgn(\sigma_{a,b,c})$ is the sign of the permutation of $\{\sigma(1),\sigma(2),\sigma(3)\}$ taking the ordered tuple $(\sigma(1),\sigma(2),\sigma(3))$ to $(a,b,c)$.

First suppose two of $\{a,b,c\}$ are even. As $\sigma$ leaves even indices fixed, and $2n+3$ is odd, we must have that $\tau(i)=i+2k+1$ for some unique $1\leq 2k+1\leq 2n-1$ . Furthermore, we must have $a=c-2$ as the even indices. Thus, for $i=1,2n+3$, we must have
\begin{align*}
&\sum_{\sigma\in\mathcal{S}_{i,n+1}}\sum_{\substack{\tau\in\Cyc_{2n+3}\\ \sigma\cdot\tau\in\mathcal{R}_{a,b,c}}}\sgn(\sigma_{a,b,c})\sgn(\sigma)\sigma\cdot\tau\cdot\big(q(x_1,x_4,\ldots,x_{2n+3})-q(x_3,\ldots,x_{2n+3})\big)\\
={}&\sum_{\substack{\sigma\in\mathcal{S}_{i,n+1}\\ \sigma(2k+3)=b}}\sgn(\sigma)\sigma\cdot\big(q(x_{2k+2},x_{2k+5},\ldots,x_{2k+1})-q(x_{2k+4},x_{2k+5},\ldots,x_{2k+1})\big)\\
={}&\sum_{\substack{\sigma\in\mathcal{S}_{i,n+1}\\ \sigma(2k+3)=b}}\begin{aligned}[t]\sgn(\sigma)\sigma\cdot &\big(q(x_1,\ldots,x_{2k+2},x_{2k+5},\ldots,x_{2n+3})\\
&-q(x_1,\ldots,x_{2k+1},x_{2k+4},\ldots,x_{2n+3})\big)\end{aligned}\\
={}&N\sum_{\substack{\sigma\in\widetilde{\mathcal{S}}\\ \sigma(i)=1}}\begin{aligned}[t]\sgn(\sigma)\sigma\cdot &\big(q(x_1,\ldots,x_{2k+2},x_{2k+5},\ldots,x_{2n+3})\\
&-q(x_1,\ldots,x_{2k+1},x_{2k+4},\ldots,x_{2n+3})\big)\end{aligned}
\end{align*}
where $\widetilde{\mathcal{S}}=\Sym(1,3,\ldots,2k+1,2k+5,\ldots,2n+3)$ and $N$ is the index of $\{\sigma\in\mathcal{S}_{i,n+1}\mid \sigma(2k+3)=b\}$ in $\mathcal{S}_{i,n+1}$, which we note is independent of $i$. Thus, by the induction hypothesis

\begin{align*}
&\sum_{\sigma\in\mathcal{S}_{1,n+1}}\sum_{\substack{\tau\in\Cyc_{2n+3}\\ \sigma\cdot\tau\in\mathcal{R}_{a,b,c}}}\sgn(\sigma)\sigma\cdot\tau\cdot(F-(1\ 3)\cdot F)\\
={}&\sum_{\sigma\in\mathcal{S}_{2n+3,n+1}}\sum_{\substack{\tau\in\Cyc_{2n+3}\\ \sigma\cdot\tau\in\mathcal{R}_{a,b,c}}}\sgn(\sigma)\sigma\cdot\tau\cdot(F-(1\ 3)\cdot F)
\end{align*}

If two of $\{a,b,c\}$ are odd, then we must have that $\tau(2)$ is even, or $\tau(2)\in\{2n+3,1\}$. We will consider only the first cases, as the others follow similarly. Without loss of generality, we assume $b$ is even, and so $\tau(i)=i+b-2$. Thus
\begin{align*}
&\sum_{\sigma\in\mathcal{S}_{i,n+1}}\sum_{\substack{\tau\in\Cyc_{2n+3}\\ \sigma\cdot\tau\in\mathcal{R}_{a,b,c}}}\sgn(\sigma_{a,b,c})\sgn(\sigma)\sigma\cdot\tau\cdot\big(q(x_1,x_4,\ldots,x_{2n+3})-q(x_3,\ldots,x_{2n+3})\big)\\
={}&\sum_{\substack{\sigma\in\mathcal{S}_{i,n+1}\\ \sigma(b-1)=a, \sigma(b+1)=c}}\sgn(\sigma)\sigma\cdot\big(q(x_{b-1},x_{b+2},\ldots,x_{b-2})-q(x_{b+1},\ldots,x_{b-2})\big)\\
&-\sum_{\substack{\sigma\in\mathcal{S}_{i,n+1}\\ \sigma(b-1)=c, \sigma(b+1)=a}}\sgn(\sigma)\sigma\cdot\big(q(x_{b-1},x_{b+2},\ldots,x_{b-2})-q(x_{b+1},\ldots,x_{b-2})\big)\\
={}&\sum_{\substack{\sigma\in\mathcal{S}_{i,n+1}\\ \sigma(b-1)=a, \sigma(b+1)=c}}\sgn(\sigma)\sigma\cdot\big(q(x_{b-1},x_{b+2},\ldots,x_{b-2})-q(x_{b+1},\ldots,x_{b-2})\big)\\
&+\sum_{\substack{\sigma\in\mathcal{S}_{i,n+1}\\ \sigma(b-1)=a, \sigma(b+1)=c}}\sgn(\sigma)\sigma\cdot\big(q(x_{b+1},x_{b+2},\ldots,x_{b-2})-q(x_{b-1},\ldots,x_{b-2})\big)\\
={}&{}0
\end{align*}
Hence, the result follows.
\end{proof}

Similar arguments can also establish the block-graded, modulo products versions of the odd-weight conjectures.
\begin{thm}
For any choice of $l_1,\ldots,l_6$, the following hold.
\begin{align*}
\sum_{\sigma\in\Sym(1,3)}\sum_{\tau\in\Sym(2,4)}\sgn(\sigma)\sgn(\tau)\ibl(l_{\sigma(1)},l_{\tau(2)},l_{\sigma(3)},l_{\tau(4)}) = 0\\
\sum_{\sigma\in\Sym(1,3,5)}\sum_{\tau\in\Sym(2,4,6)}\sgn(\sigma)\sgn(\tau)\ibl(l_{\sigma(1)},l_{\tau(2)},l_{\sigma(3)},l_{\tau(4)},l_{\sigma(5)},l_{\tau(6)}) = 0
\end{align*}
\end{thm}

\section{Further Remarks}
As $\bg$ is a free Lie algebra, we have a non-canonical isomorphism $\gmot\cong \bg$, and hence, we should be able to lift relations such as cyclic insertion and the block shuffle relations to $\gmot$ . One would hope that we could follow the example of Brown in \cite{anatomy}, in which he lifts solutions from $\ls$ to $\dmr$, but it is as yet unclear how this should work. Progress on this has been made by Hirose and Sato, who propose an ungraded version of the block shuffle relations, but it is an area ripe for further consideration. If one were to extend the relations described in this paper to a complete set of relations among block graded motivic multiple zeta values, a successful lifting of these relations could provide a complete description of all relations among motivic multiple zeta values, and give an approach to tackling questions about the completeness of the associator and double shuffle relations.

 In block degree one, we have shown the relations discussed are complete, but in general, further relations are required. However, one can show that, if $f_b(N)$ is the dimension of the weight $N$ vector space cut out by the relations described here in block degree $b$, then
$(\dim \gr^\mathcal{B}_b \mathcal{H}_N)/f_b(N)$ tends to a positive finite limit as $N\to\infty$. Furthermore, numerical evidence suggests that the ratio is close to $1$ for all $N$ and small $b$. However, further relations are needed, even in block degree $2$. While the projections into highest block degree of the double shuffle relations arising from simple products $\zm(w_1)\zm(w_2)$ numerically seem to be consequences of the relations discussed here, the block graded parts of certain linear combinations are not. For example
$$\zm(3,2,3)+2\zm(2,3,3)+\frac{7}{2}(\zm(2,4,2)+2\zm(4,2,2))=0\text{ modulo products}$$
is a missing graded relation in weight 8. Nevertheless, this is a promising start and provides an interesting new lens through which to consider MZVs. The construction of genuine relations, and the connection between these block-graded relations and known relations is explored further in the author's doctoral thesis \cite{mythesis}.

\bibliographystyle{abbrv}
\bibliography{bibliography_MZV_BLock}

\end{document}